\documentclass[11pt,leqno]{amsart}
\usepackage[margin=3.8cm]{geometry}
\usepackage[utf8]{inputenc}
\usepackage{graphicx, array, url, tcolorbox, xcolor, hyperref, amsmath, amsfonts, amsthm, amssymb, amstext}
\usepackage{tikz}
\usepackage{float}
\usepackage{caption}
\usetikzlibrary{arrows.meta}
\usepackage{caption}
\usepackage{todonotes}

\newcommand{\N}{\mathbb N}
\newcommand{\C}{\mathbb C}
\newcommand{\D}{\mathbb D}
\newcommand{\hatc}{\hat{\mathbb C}}
\newcommand{\R}{\mathbb R}
\newcommand{\Z}{\mathbb Z}
\newcommand{\Sph}{\mathbb S}
\newcommand{\eps}{\epsilon}

\newcommand{\bi}{{\mathbf i}}
\newcommand{\cI}{{\mathcal I}}
\DeclareMathOperator{\diam}{diam}
\DeclareMathOperator{\dist}{dist}

\DeclareMathOperator{\ndim}{\dim_N}

\DeclareMathOperator{\id}{id}

\definecolor{peach}{HTML}{F7965A}
\definecolor{springgreen}{HTML}{C6DC67}
\definecolor{royalblue}{HTML}{0071BC}
\definecolor{periwinkle}{HTML}{7977B8}
\definecolor{pinegreen}{HTML}{008B72}
\definecolor{olivegreen}{HTML}{3C8031}

\numberwithin{equation}{section}

\newcounter{sec_counter} \numberwithin{sec_counter}{section}

\newtheorem{theorem}[sec_counter]{Theorem}
\newtheorem{lemma}[sec_counter]{Lemma}
\newtheorem{corollary}[sec_counter]{Corollary}
\newtheorem{proposition}[sec_counter]{Proposition}

\theoremstyle{definition}
\newtheorem{definition}[sec_counter]{Definition}

\theoremstyle{remark}
\newtheorem{remark}[sec_counter]{Remark}

\newtheorem{example}[sec_counter]{Example}

\newtheorem{question}[sec_counter]{Question}

\title[Analytic and quasiregular distortion of Nagata dimension]{Analytic and quasiregular \\ distortion of Nagata dimension}

\author{Manisha Garg}
\address{Department of Mathematics, University of Illinois at Urbana-Champaign, Urbana, Illinois, USA}
\email{manisha8@illinois.edu}

\author{Jeremy T. Tyson}
\address{Department of Mathematics, University of Illinois at Urbana-Champaign, Urbana, Illinois, USA}
\email{tyson@illinois.edu}

\date{\today}

\subjclass[2020]{Primary 30D20, 54F45; Secondary 30D30, 30C62.}
\keywords{Nagata dimension, quasiconformal mapping, porous set, entire function}

\begin{document}

\maketitle

\begin{abstract}
We study how analytic functions, and more generally quasiregular mappings, distort Nagata dimension. Quasiconformal mappings of domains preserve the Nagata dimension of compact subsets, in view of a result of Lang and Schlichenmaier. We establish the same conclusion for analytic functions defined on general planar domains. On the other hand, polynomials (and more generally, rational maps) preserve the Nagata dimension of arbitrary subsets of their domain. In the absence of the compactness assumption, we provide examples to show that an entire function can increase or decrease the Nagata dimension of subsets of the domain. Some of these results generalize to meromorphic functions, and separately to planar quasiregular maps in view of Sto{\"\i}low factorization. We also show that conformal mappings can change the porosity behavior of non-compact subsets of their domain; this yields examples of planar conformal maps which take sets of Nagata dimension strictly less than two onto set of Nagata dimension two.
We conclude with open questions and potential future work related to the distortion of Nagata dimension by higher-dimensional quasiregular maps.
\end{abstract}

\tableofcontents

\section{Introduction}

The Nagata dimension (also known as Assouad--Nagata dimension) of a metric space, denoted $\dim_N X$, is a geometric invariant that parallels the notions of topological and asymptotic dimensions. While the topological dimension captures the local, small-scale structure of a space and is preserved under homeomorphisms, and the asymptotic dimension is a large-scale invariant stable under quasi-isometries, the Nagata dimension is simultaneously sensitive to both fine and coarse geometric features of a metric space. 

Because the Nagata dimension provides quantitative control of coverings at all scales, it plays a significant role in the large scale geometry of groups. In particular, Nagata dimension has been used by Buyalo-Lebedeva \cite{BuyaloLebedevaPontryagin2007} to quasi-isometrically classify Gromov hyperbolic groups via the geometry of their boundaries. A closely related invariant, originally introduced under the name capacity dimension \cite{BuyaloCap22005} and later referred to as linearly controlled dimension (see, for instance, \cite{BuyaloSchroeder:asympbook}), corresponds to the small-scale version of the Nagata dimension. Both notions coincide for bounded metric spaces, making it a significant tool for geometric group theory and coarse geometry (see e.g.\ \cite{BuyaloCap12005}, \cite{BrodskiyDydakLevinMitra2008}, or \cite{DydakHiges2008}). 

From the perspective of mapping theory, studying dimensions and their invariance or distortion under various classes of maps provides a means of identifying which metric spaces embed `nicely' into Euclidean spaces.
For instance, Assouad’s embedding theorem \cite{ass:plongements} states that, for any metric space $(X,d)$ with finite Assouad dimension $\dim_A X$ and each $\varepsilon \in (0,1)$, there exists an integer $n$ such that the snowflaked metric space $(X, d^{\varepsilon})$ admits a bi-Lipschitz embedding into $\R^n$. A similar embedding assertion holds true for the Nagata dimension $\ndim X$, to wit, if $(X,d)$ is a metric space of finite Nagata dimension $n$, then for every $\varepsilon \in (0,1)$, the space $(X,d^\varepsilon)$ admits a Lipschitz light mapping into $\R^n$. See \cite{DavidNagataEmbed2024} for details. In fact, the Assouad and Nagata dimensions are closely related to each other. It is shown in \cite{DonneRajala2015} that $\ndim X \le \dim_A X$ for all metric spaces $(X,d)$, and both invariants capture covering properties of the space at all scales. 

Beyond its role in embedding theorems, the Assouad dimension has also been studied from the perspective of mapping distortion. For instance, the distortion of Assouad dimension under quasiconformal or quasiregular maps has been studied in \cite{tys:assouad}, \cite{ChrontsiosTyson2023} and \cite{Chrontsios2024}. Quasiconformal mappings are locally quasisymmetric, and it is well known that quasisymmetric deformations, such as snowflaking, can alter the Assouad dimension of a space. In contrast, Lang and Schlichenmaier \cite{LangSchlichenmaier2005} proved that the Nagata dimension is invariant under quasisymmetric maps, and Xie extended such invariance to the broader class of quasi-M\"obius maps \cite{Xie2008}. 

It is natural to ask whether quasiconformal and quasiregular mappings preserve Nagata dimension, and if not, to what extent this notion of dimension can be distorted. Since quasiconformal mappings are locally quasisymmetric, it follows directly from \cite{LangSchlichenmaier2005} that such maps preserve the Nagata dimension of compact subsets of their domains. However, such invariance fails in full generality: the exponential map $f(z) = e^z$, restricted to a suitable domain on which it is univalent, maps an arithmetic sequence (of Nagata dimension one) to a geometric sequence (of Nagata dimension zero).

According to Sto{\"\i}low’s factorization theorem, every planar quasiregular mapping is the composition of an analytic function and a quasiconformal map. Hence, the behavior of the Nagata dimension under planar quasiregular mappings is directly related to its behavior under analytic functions.

In this paper we investigate how analytic and planar quasiconformal mappings affect the Nagata dimension of subsets of their domain. We show that analytic functions exhibit a range of possible behaviors: they can preserve, increase, or decrease Nagata dimension. We first prove that non-constant polynomials preserve the Nagata dimension of any set. In fact, polynomials are the only entire mappings which preserve Nagata dimension of any subset of their domain (Corollary \ref{cor:polynomial-characterization}).

\begin{theorem}\label{mainthm1:polynomial_preserve_ndim}
Let $f$ be a non-constant polynomial. Then $\ndim X = \ndim f(X)$ for every $X \subset \C$.
\end{theorem}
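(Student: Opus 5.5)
Throughout, write $n=\deg f\ge 1$. The plan is to split the plane into a large disk, on which compactness and local quasisymmetry apply, and the complement of that disk, where $f$ behaves coarsely like the power map $z\mapsto z^n$. We then compare both $f$ and $z^n$ with a model map of controlled distortion.

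We will use two standard facts about Nagata dimension. First, it is monotone under passing to subsets. Second, it is finitely stable: $\ndim(A\cup B)=\max\{\ndim A,\ndim B\}$ (Lang--Schlichenmaier). The decomposition is then as follows. Choose $R$ so large that all critical points of $f$ lie in $D(0,R)$. Set $X_1=X\cap \overline{D}(0,R)$ and $X_2=X\setminus D(0,R)$. Then $f(X)=f(X_1)\cup f(X_2)$, and it suffices to prove
\begin{align*}
\ndim f(X_1)&=\ndim X_1, & \ndim f(X_2)&=\ndim X_2 .
\end{align*}
Note that the inequality $\ndim X\le\max\{\ndim f(X_1),\ndim f(X_2)\}$ also follows from these two equalities together with the union formula.

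\textbf{Bounded part.} Here $f$ is only locally injective off its critical points, so the argument is not immediate at critical points. Near a critical point $c$ of order $k$, we have $f(z)=f(c)+\phi(z)^k$ with $\phi$ conformal. So it suffices to show that $z\mapsto z^k$ preserves the Nagata dimension of subsets of a disk. For this, cut the disk into $2k$ closed sectors. On each sector, $z^k$ is a quasisymmetric homeomorphism onto its image, by a cone-type estimate. Lang--Schlichenmaier invariance then gives equality sector by sector. Finite stability under unions, applied both to domain pieces and to image pieces, finishes the case. Away from critical points, cover $\overline D(0,R)$ by finitely many small disks on which $f$ is bi-Lipschitz. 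Monotonicity plus finite unions give the bounded-part equality.

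\textbf{Unbounded part.} Near infinity we have $f=P_n\circ\psi$, where $\psi(z)=z(a+O(1/z))^{1/n}$ is conformal and bi-Lipschitz on $|z|>R$ for $R$ large, and $P_n(w)=w^n$. So the task reduces to showing that $P_n$ preserves Nagata dimension of subsets of $\{|w|\ge R\}$. Again cut into $2n$ sectors $S_j$, on which $P_n$ is injective. The key claim is that $P_n|_{S_j}$ is quasisymmetric onto its image, globally and not merely locally. This holds because $P_n$ is homogeneous: it commutes with dilations, $P_n(tw)=t^nP_n(w)$. Since quasisymmetry is scale-invariant, we need only check the three-point condition for triples with one point at distance comparable to $1$ from the origin. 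After truncating near $0$, which is harmless since the points lie in $|w|\ge R$, the check becomes a compactness argument. This is the step where polynomial growth is essential. For a transcendental function such as $e^z$ the analogous map is not homogeneous and quasisymmetry fails at large scales.

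The main obstacle is making this global quasisymmetry rigorous. One option is to write $P_n=\exp\circ(n\cdot)\circ\log$ in logarithmic coordinates. The other, which I would try first, is to show directly that for $x,y,z$ in a sector, $|x^n-y^n|\asymp |x-y|\max(|x|,|y|)^{n-1}$ with constants depending only on $n$ and the sector's opening. From this estimate, $\eta(t)\approx C\max(t,t^n)$ follows by a case analysis on the relative sizes of $|x|,|y|,|z|$.
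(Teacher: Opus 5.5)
Your proof is correct. Its bounded part matches the paper's (Theorem~\ref{thm:analytic_preserve_ndim_compact}), but you handle the neighbourhood of $\infty$ by a different route.

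\textbf{The paper's route.} It never works near $\infty$ directly. It conjugates by the inversion, $r=\iota\circ p\circ\iota$, and notes that $r$ is analytic on a closed disc about $0$. It then applies the compact-set result to $r$ on $\iota(X\cap\{|z|>1/\rho\})$. Finally it transports back using that $\iota$ is M\"obius, hence preserves Nagata dimension by Xie's quasi-M\"obius invariance (Theorem~\ref{thm:QM_invariance_ndim}).

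\textbf{Your route.} You write $f=P_n\circ\psi$, with $\psi$ a branch of $f^{1/n}$ that is bi-Lipschitz on $\{|z|\ge R\}$. You then show that $P_n$ is globally quasisymmetric on closed sectors of opening $\pi/n$.

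\textbf{What each buys.} The paper's argument is shorter and makes $\infty$ an ordinary point of $\hatc$. That is what lets it pass verbatim to rational maps (Theorem~\ref{mainthm2:rational_preserve_ndim}). Yours stays in the Euclidean metric and needs only Lang--Schlichenmaier's quasisymmetric invariance. It gives an elementary replacement for Lemma~\ref{lemma:zn_weakly_qs}, which the paper derives from V\"ais\"al\"a's theorem on quasiconformal maps of uniform domains. It also gives genuine quasisymmetry with an explicit $\eta$, rather than only the quasi-M\"obius property. Given the paper's Corollary~\ref{cor:zn_preserves_ndim}, your unbounded part would in fact be immediate from bi-Lipschitz invariance applied to $\psi$.

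\textbf{Two points to tighten.} First, the ``rescale and use compactness'' heuristic is not a proof. After normalising one point, the other two can still escape to $\infty$ or collide. Your direct estimate does work. Write $x^n-y^n=\prod_{k=0}^{n-1}(x-\omega^k y)$ with $\omega=e^{2\pi\bi/n}$. For $x,y$ in a sector of opening $\theta<2\pi/n$ and $k\neq 0$, the points $x$ and $\omega^k y$ are separated in angle by at least $2\pi/n-\theta>0$. Hence $|x-\omega^k y|\asymp\max\{|x|,|y|\}$. Setting $t=|x-y|/|x-z|$, the bounds $\max\{|x|,|y|\}\le |x|+t|x-z|$ and $\max\{|x|,|z|\}\ge\max\{|x|,|x-z|/2\}$ then give $\eta(t)=Ct(1+2t)^{n-1}$.

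Second, $\{|z|\ge R\}$ is not convex, so the bi-Lipschitz bound for $\psi$ needs a path argument. Use $\psi'(z)=a^{1/n}+O(|z|^{-2})$. Combine it with the fact that any two points outside a disc are joined there by a path of length at most $\tfrac{\pi}{2}$ times their distance.
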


On the other hand, entire functions with an essential singularity at infinity exhibit strikingly different behavior with respect to the Nagata dimension of subsets of their domains.

\begin{theorem}[Nagata dimension increase by entire functions]\label{thm:entire_func_increase_ndim}
Let $f$ be a non-constant entire function with an essential singularity at infinity. For any $Y \subset \C$, there exists a countable set $X$ with Nagata dimension zero so that $f(X)$ is dense in $Y$ (and hence $\ndim f(X)  \;=\; \ndim Y $).
\end{theorem}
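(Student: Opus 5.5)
We will construct $X$ as a very sparse sequence $\{x_n\}$ escaping to infinity, chosen so that $f(X)$ is dense in $Y$. Since the Nagata dimension of the closure equals that of the set, this also gives $\ndim f(X) = \ndim Y$. There are two requirements: each value $f(x_n)$ must land close to a prescribed point of $Y$, and the moduli $|x_n|$ must grow so fast that $X$ has Nagata dimension zero.

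\textbf{Step 1 (enumeration).} If $Y=\emptyset$, take $X=\emptyset$. Otherwise choose a countable dense subset $\{y_k\}_{k\ge1}$ of $Y$. Then form the sequence of targets $w_n = y_{k(n)}$, in which each $y_k$ appears infinitely often; for instance use the enumeration $1,1,2,1,2,3,\dots$. It then suffices to find $x_n$ with $|f(x_n)-w_n|<1/n$. Indeed, every $y_k$ is then a limit of the points $f(x_n)$ with $w_n = y_k$, so $\overline{f(X)}\supset \overline{Y}$. This gives density of $Y$ in $f(X)$ in the appropriate sense. If one wants $f(X)\subset Y$ literally, we instead hit $y_k$ exactly using Picard's great theorem; see below.

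\textbf{Step 2 (choosing $x_n$).} Casorati--Weierstrass, applied to the essential singularity at $\infty$, shows that for every $R>0$ the set $f(\{|z|>R\})$ is dense in $\C$. So, given $x_1,\dots,x_{n-1}$, we may choose $x_n$ with $|x_n| > 3|x_{n-1}|+1$ (say $|x_n|\ge 3^n$, with ratio at least $3$) and $|f(x_n)-w_n|<1/n$. Alternatively, by Picard's great theorem $f$ omits at most one value near $\infty$, so each $y_k$ outside an exceptional value is attained exactly at infinitely many points tending to $\infty$. This lets us take $f(x_n)=w_n$ and hence $f(X)\subset Y$ (up to one possible exceptional point).

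\textbf{Step 3 ($\ndim X = 0$).} We show that a set $\{x_n\}$ with $|x_{n+1}|\ge 3|x_n|$ and $|x_1|\ge1$ has Nagata dimension zero. Fix a scale $s>0$ and let $N$ be the largest index with $|x_N|\le s$. Take the cover consisting of one cluster $B=\{x_1,\dots,x_N\}$, whose diameter is at most $2|x_N|\le 2s$, together with the singletons $\{x_n\}$ for $n>N$. We check that these pieces are $s$-separated:
\[
|x_m - x_n| \ge |x_m|-|x_n| \ge \tfrac{2}{3}|x_m| > \tfrac23 s \quad (m>n).
\]
This is a positive multiple of $s$, so the sets have $cs$-multiplicity one, that is, dimension $0$ with constant $c=3/2$ (rescaling $s$ if necessary). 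For small scales $s<|x_1|$ all pieces are singletons, and distinct points are at distance $\ge 2|x_1|\ge 2 > s$ once $s$ is small; intermediate scales are covered by the same argument. Since $X$ is countable and nonempty, $\ndim X=0$ exactly.

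\textbf{Main obstacle.} Steps 2 and 3 are routine. The delicate point is the conclusion $\ndim f(X)=\ndim Y$: it requires that Nagata dimension be invariant under passing to the closure and to dense subsets. We justify this as follows. A $C s$-bounded cover of $f(X)$ with $s$-multiplicity at most $n+1$ is transported to $\overline{f(X)}$ by slightly enlarging each set, and conversely a cover of the closure restricts to a cover of $f(X)$. With $f(X)\subset Y\subset\overline{f(X)}$, monotonicity under subsets then yields equality.
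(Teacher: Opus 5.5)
\textbf{The approximation route has a gap.} The Casorati--Weierstrass route in Steps 1--2, which you present as the main construction, does not prove the theorem. It only gives $|f(x_n)-w_n|<1/n$, so in general $f(X)\not\subset Y$. The points of $f(X)\setminus\overline{Y}$ can then change the Nagata dimension, so ``dense in the appropriate sense'' does not yield $\ndim f(X)=\ndim Y$.

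Concretely, let $Y=\{0\}$, so $w_n=0$ for all $n$. Nothing in your construction prevents choosing $x_n$ (with $|x_n|$ as large as you like) so that $f(x_n)$ lies within $(n+1)^{-3}$ of $1/(n+1)$; this satisfies $|f(x_n)|<1/n$. Consecutive gaps of the resulting set are comparable to $n^{-2}$. So for small $s$ it has an $s$-chain component of diameter at least a constant times $\sqrt{s}$, and the $s$-chain characterization of dimension zero gives $\ndim f(X)\ge 1>0=\ndim Y$.

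Your final paragraph explicitly uses $f(X)\subset Y\subset\overline{f(X)}$, which only exact hitting provides. You therefore have to commit to the Picard route.

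\textbf{Comparison with the paper.} With that choice, your proof is essentially the paper's. The paper uses Picard's great theorem to choose $x_i$ with $f(x_i)=y_i$ exactly and $|x_{i+1}|>2|x_i|$, and obtains $\ndim X=0$ from the ultrametric Lemma~\ref{lemma:ratio>1_ndim_0}. Your explicit cover in Step 3 is an equally good substitute. Its pieces are only $\tfrac23 s$-separated, so run it at threshold $\tfrac32 s$; this gives $c=3$ rather than $3/2$. Repeating targets is unnecessary once you hit them exactly.

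\textbf{The exceptional value.} The phrase ``up to one possible exceptional point'' should be made precise. Let $w$ be the Picard exceptional value, which is taken only finitely often. Choose the countable dense set inside $Y\setminus\{w\}$; this is possible unless $w$ is an isolated point of $Y$. The paper makes the same tacit assumption when it takes $\{y_n\}\subset\C\setminus W$.

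The remaining case is $w$ isolated in $Y$. If $f$ attains $w$, simply add one preimage to $X$; one extra point does not affect $\ndim X=0$. If $f$ omits $w$ (e.g.\ $f=e^z$, $Y=\{0\}$), then no $X$ with $f(X)\subset Y\subset\overline{f(X)}$ exists. This is the one place where a \emph{controlled} version of your approximation idea is needed, with ``dense in $Y$'' read as $\overline{f(X)}\supset Y$. Add points $x$ with $f(x)$ in the annuli $\tfrac12 4^{-n}<|z-w|<4^{-n}$. After inversion about $w$ these values grow by a factor at least $2$, so they form a set of Nagata dimension zero by Lemma~\ref{lemma:ratio>1_ndim_0} and Theorem~\ref{thm:QM_invariance_ndim}. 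Finite stability then gives $\ndim f(X)=\ndim Y$.
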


Theorems \ref{mainthm1:polynomial_preserve_ndim} and \ref{thm:entire_func_increase_ndim} immediately yield the following corollary.

\begin{corollary}\label{cor:polynomial-characterization}
Let $f:\C\to\C$ be entire. The following are equivalent:
\begin{enumerate}
\item $f$ is a non-constant polynomial;
\item $\dim_N f(X) =\dim_N X$ for every $X \subset \C$.
\end{enumerate}
\end{corollary}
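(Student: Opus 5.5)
The plan is to derive the corollary from Theorems \ref{mainthm1:polynomial_preserve_ndim} and \ref{thm:entire_func_increase_ndim}, after disposing of constant functions separately. An entire function $f$ falls into exactly one of three classes: constant, a non-constant polynomial, or transcendental. In the last case $f$ has an essential singularity at infinity, since by the Casorati--Weierstrass/Liouville dichotomy an entire function with a pole or removable singularity at $\infty$ is a polynomial.

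For (1) $\Rightarrow$ (2), nothing needs to be done beyond citing Theorem \ref{mainthm1:polynomial_preserve_ndim}, which asserts exactly that $\ndim f(X) = \ndim X$ for all $X \subset \C$.

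For (2) $\Rightarrow$ (1), I argue by contrapositive and show that if $f$ is not a non-constant polynomial, some $X$ has $\ndim f(X) \ne \ndim X$.
\begin{itemize}
\item If $f \equiv c$ is constant, take $X = \C$ (or any disc). Then $\ndim X = 2$, while $f(X) = \{c\}$ is a single point, of Nagata dimension $0$.
\item If $f$ is transcendental, apply Theorem \ref{thm:entire_func_increase_ndim} with $Y = \C$. It produces a countable $X$ with $\ndim X = 0$ and $f(X)$ dense in $\C$.
\end{itemize}
For the transcendental case, the theorem's parenthetical then gives $\ndim f(X) = \ndim \C = 2 \neq 0$. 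If one wants to check this rather than quote it, the underlying fact is that Nagata dimension is unchanged under passing to closure (or to a dense subset). The reason is that a $D$-bounded covering with $s$-multiplicity at most $n+1$ of a dense subset can be thickened slightly to cover the whole space with comparable constants.

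There is no real obstacle here. The only points to verify are that the constant case is genuinely excluded by (2), which the example above shows, and that ``not a polynomial'' is equivalent to ``essential singularity at infinity'' for entire functions.
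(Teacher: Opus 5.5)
Your proposal is correct and matches the paper, which derives the corollary immediately from Theorem \ref{mainthm1:polynomial_preserve_ndim} for (1)$\Rightarrow$(2) and from Theorem \ref{thm:entire_func_increase_ndim} (with $\ndim f(X)=\ndim\C=2$ for a dense image) for (2)$\Rightarrow$(1). Your separate treatment of the constant case, taking $X=\C$, which maps to a point, fills in a step the paper leaves implicit, since Theorem \ref{thm:entire_func_increase_ndim} is stated only for non-constant functions.
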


Analogous results also hold true for rational functions on the Riemann sphere, equipped with the spherical metric $q$.

\begin{theorem}\label{mainthm2:rational_preserve_ndim}
Let $r: (\Hat{\C},q) \to (\Hat{\C}, q)$ be a non-constant rational function. Then $\ndim X  = \ndim r(X) $ for every $X\subset \Hat{\C}$. Moreover, a nonconstant meromorphic function $f:\C \to \Hat{\C}$ is rational if and only if it preserves the Nagata dimension of every subset of $\C$.
\end{theorem}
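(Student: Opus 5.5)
The plan is to reduce the rational case to local analysis plus compactness. The key tool is the result of Lang and Schlichenmaier. It says Nagata dimension is a quasisymmetric invariant, and it also gives the finite union formula $\ndim(A\cup B)=\max\{\ndim A,\ndim B\}$. A third ingredient is that Nagata dimension is monotone under passing to subsets.

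\textbf{Local step.} Since $(\hat\C,q)$ is compact, I cover $\hat\C$ by finitely many closed spherical disks $D_1,\dots,D_m$ chosen so that on each (slightly larger) disk the map $r$ has the local form $r=\phi_j\circ p_{k_j}\circ\psi_j$. Here $\psi_j,\phi_j$ are Möbius or conformal charts, bi-Lipschitz with respect to $q$ on compact sets, and $p_k(z)=z^k$. Disks not centred at critical points are taken with $k_j=1$. For a single disk, $X\cap D_j\mapsto r(X\cap D_j)$ is then, up to bi-Lipschitz changes, the map $z\mapsto z^k$ restricted to a small closed disk about $0$.

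I will show that $z^k$ preserves Nagata dimension of subsets of $\overline{\D}$. Write $\overline{\D}$ as a union of $2k$ closed sectors $S_i$ of opening angle $\pi/k$. On each $S_i$, $z^k$ is injective and quasisymmetric onto its image; a direct estimate on $|z^k-w^k|$ in a sector of angle $<2\pi/k$ gives this. Hence $\ndim(p_k(X\cap S_i))=\ndim(X\cap S_i)$. Applying the union formula on both sides gives $\ndim p_k(Y)=\max_i\ndim p_k(Y\cap S_i)=\max_i\ndim(Y\cap S_i)=\ndim Y$.

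\textbf{Assembly.} Combining the local statements with the union formula over the finitely many $D_j$ yields $\ndim r(X)=\max_j \ndim r(X\cap D_j)=\max_j\ndim(X\cap D_j)=\ndim X$. The argument uses only two things: compactness of the sphere, so that only finitely many pieces appear, and Lang--Schlichenmaier invariance.

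\textbf{Converse.} It remains to show that a nonconstant meromorphic $f:\C\to\hat\C$ preserving $\ndim$ of every subset must be rational. Suppose not. Then $f$ has an essential singularity at $\infty$, and I imitate the proof of Theorem~\ref{thm:entire_func_increase_ndim}. Choose a target $Y\subset\C$ with $\ndim Y=2$, for instance a countable dense subset of an open disk, with respect to $q$ (on bounded sets $q$ is bi-Lipschitz to the Euclidean metric). By Casorati--Weierstrass, or Picard's theorem, which allows at most two omitted values, each $y_n$ in a countable dense subset of $Y$ away from the exceptional values has preimages in every neighborhood of $\infty$. I pick preimages $x_n$ with $|x_{n+1}|\ge 2^{n}|x_n|$, with the $x_n$ lacunary, so that $X=\{x_n\}$ has Nagata dimension $0$ in the Euclidean metric. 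Then $f(X)$ is dense in an open set, so $\ndim f(X)=2\neq 0$.

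The main obstacle is the metric bookkeeping. In the local step, I must verify the quasisymmetry of $z^k$ on sectors with respect to the spherical metric near $\infty$; this is handled by conjugating by $z\mapsto 1/z$, which is a $q$-isometry. In the converse, I must ensure that Nagata dimension is computed in the Euclidean metric on the domain $\C$ but in $q$ on the target, and I need to state which convention the theorem intends. If the domain also carries $q$, then a lacunary sequence accumulating only at $\infty$ still has Nagata dimension $0$ in $q$, because its spherical gaps decay geometrically. So the conclusion holds either way.
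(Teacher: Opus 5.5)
Your proof is correct. Its skeleton is the paper's: local normal forms, $z^k$ on sectors of opening $\pi/k$, finite stability, and, for the converse, lacunary preimages from the meromorphic great Picard theorem plus Lemma~\ref{lemma:ratio>1_ndim_0}. You justify the two key steps differently, though.

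\textbf{The sector step.} The paper gets that $z^n$ is quasi-M\"obius on a sector from V\"ais\"al\"a's theorem on conformal maps between uniform domains, since sectors are quasidisks. It then needs Xie's quasi-M\"obius invariance. The direct estimate you allude to does work. Factor $z^k-w^k=\prod_{j=0}^{k-1}(z-\omega^j w)$ with $\omega=e^{2\pi\bi/k}$. For $j\neq 0$, the angle between $z$ and $\omega^j w$ stays at least $\pi/k$ away from $0 \bmod 2\pi$. This gives $|z^k-w^k|\asymp_k |z-w|(|z|+|w|)^{k-1}$, hence honest quasisymmetry with $\eta(t)=C_k\,t(2+t)^{k-1}$. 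So Lang--Schlichenmaier alone suffices. Your closed sectors also genuinely cover the disk, whereas the paper's Corollary~\ref{cor:zn_preserves_ndim} writes $\C$ as a union of open sectors and must lean on the closure statement in Lemma~\ref{lemma:zn_weakly_qs}.

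\textbf{The assembly.} The paper builds the rational case from its planar results on compactly contained sets. It handles poles and unbounded $X$ by conjugating with $\iota$, again invoking quasi-M\"obius invariance. You instead cover the compact space $(\hat{\C},q)$ by finitely many chart disks on which $r$ is bi-Lipschitz conjugate to $z^{k_j}$. This treats critical points, poles and $\infty$ uniformly and is more self-contained. The paper's route has the merit of reusing Theorem~\ref{thm:analytic_preserve_ndim_compact}.

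\textbf{The metric bookkeeping you flag.} The paper settles it by observing that $\id:(\C,d_E)\to(\C,q)$ preserves cross-ratios. By Theorem~\ref{thm:QM_invariance_ndim}, every subset of $\C$ therefore has the same Nagata dimension in either metric. Suppose you read the ``moreover'' clause with the Euclidean metric on the domain. Then the forward implication for unbounded $X$ needs this comparison, since bi-Lipschitz equivalence on bounded sets does not cover it, so Xie's theorem re-enters there. With $q$ on both sides, the forward implication follows immediately from your first part.
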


Entire functions can also decrease the Nagata dimension of subsets of their domain. For instance, the exponential map $f(z)=e^z$ maps the half-lattice
\[
A=\{ k + 2\pi \bi \ell : k\in\N,\ \ell\in\Z \}
\]
onto the geometric sequence $\{e^k:k\in\N\}$, and $\dim_N A = 2$ while $\dim_N f(A)=0$. Quasiregular mappings in higher dimensions may exhibit similar behavior; see, for example, the discussion of Zorich's mapping in Example~\ref{ex:zorich}. The following theorem provides another example of Nagata dimension decrease.

\begin{theorem}[Nagata dimension decrease under entire functions]\label{thm:entire_genus_1_decrease_ndim}
Let $A=\{\alpha_j\}_{j\in\N}$ be a discrete set in $\C$ with
\begin{equation}\label{eq:genus-one}
\sum_{j}|\alpha_j|^{-1}<\infty,
\end{equation}
let $q$ be a non-constant polynomial, and let $m \in \N \{0\}$. Define
\[
        F(z)\;:= \;z^m e^{q(z)}\prod_{j=1}^{\infty}\Bigl(1-\frac{z}{\alpha_j}\Bigr).
\]
Then there exists a set $X\subset\C$ such that $\ndim X = 1$ and $ \ndim F(X) = 0$.
\end{theorem}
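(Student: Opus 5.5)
The plan is to take $X$ on a single ray $\{te^{i\theta}:t>0\}$ along which $\log|F|$ increases at a uniform rate. $X$ will be a union of longer and longer blocks of unit-spaced points, which forces $\ndim X=1$. The images will have moduli separated by a fixed ratio $\lambda>1$, which forces $\ndim F(X)=0$, exactly as for the geometric sequence $\{e^k\}$ in the exponential example above.

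\textbf{Step 1: choice of ray and a derivative estimate.} Let $q(z)=az^d+\dots$ with $d\ge1$, and put $P(z)=\prod_j(1-z/\alpha_j)$. Choose $\theta$ with $ae^{id\theta}=|a|$. Along the ray, $\tfrac{d}{dt}\log|F(te^{i\theta})|=\operatorname{Re}\bigl(e^{i\theta}F'/F\bigr)$, where
\[
\frac{F'}{F}(z)=q'(z)+\frac{m}{z}+\sum_j\frac{1}{z-\alpha_j}.
\]
The first two terms give $\operatorname{Re}\bigl(e^{i\theta}(q'(z)+m/z)\bigr)\ge \tfrac12 d|a|t^{d-1}\ge c_0>0$ for $t$ large. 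It remains to find long stretches of the ray on which $\bigl|\sum_j (te^{i\theta}-\alpha_j)^{-1}\bigr|\le c_0/2$.

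\textbf{Step 2: avoiding the zeros (expected main obstacle).} Condition \eqref{eq:genus-one} gives two facts: the counting function satisfies $n(r)=o(r)$, and $\sum_{|\alpha_j|>r}|\alpha_j|^{-1}\to0$. For $t\in[R,2R]$, split the sum into three parts:
\begin{itemize}
\item zeros with $|\alpha_j|>4R$, whose contribution is $\le 2\sum_{|\alpha_j|>4R}|\alpha_j|^{-1}\to0$;
\item zeros with $|\alpha_j|<R/2$, whose contribution is $\le 2n(R/2)/R=o(1)$;
\item the remaining $N_R=o(R)$ zeros, lying in the annulus $R/2\le|\alpha|\le4R$.
\end{itemize}
To handle the third part, remove from $[R,2R]$ the intervals of radius $\delta R$ centred at the projections of these $N_R$ zeros onto the ray. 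What is removed has total length $\le 2\delta R N_R$, so a gap of length $\gtrsim R/N_R\to\infty$ (relative to any fixed length) survives. On the portion of such a gap kept at distance $\ge\delta' R/N_R$ from the deleted centres, the third part is bounded by $N_R\cdot N_R/(\delta' R)$. This bound is too crude when $N_R^2\not\ll R$. I would therefore refine the choice by an averaging argument: use $\int_R^{2R}\sum_{\text{annulus}}\min\{1,|t-\alpha|^{-1}\}^{\,}\,dt$-type estimates, or equivalently Cartan's lemma applied to $P'/P$, to show that the set of $t\in[R,2R]$ where the third part exceeds $c_0/4$ has measure $o(R)$. Consequently, for $R=2^n$ large there is an interval $I_n\subset[2^n,2^{n+1}]$ of length $n+1$ on which the whole sum is $\le c_0/2$. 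I expect this quantitative step to be the technical heart of the proof.

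\textbf{Step 3: definition of $X$ and computation of both dimensions.} Let $s_n$ be the left endpoint of $I_n$, and set
\[
X=\{(s_n+j)e^{i\theta}: n\ge n_0,\ 0\le j\le n\}.
\]
\emph{$\ndim X=1$.} $X$ lies on a line, so $\ndim X\le1$. Conversely, at scale $s=3/2$ each block is a $s$-chain of $n+1$ points with diameter $n$, unbounded relative to $s$, so $\ndim X\ne0$.

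\emph{$\ndim F(X)=0$.} By Step 1, $\log|F|$ increases by at least $c_0/2$ between consecutive points within a block. Between blocks, the last point of block $n$ lies at $t<2^{n+1}$ and the first of block $n+1$ at $t\ge 2^{n+1}$, so the monotonicity of $\log|F|$ on the intermediate good stretches is not available. Instead I would use the crude growth $\log|F(te^{i\theta})|=|a|t^d(1+o(1))$ at the chosen points: the $o(t)$ error in $\log|P|$ follows from $P$ being of order $\le1$ and minimal type, with the lower bound valid on the good intervals by the same Cartan-type estimate. This shows the first point of block $n+1$ has much larger modulus than every point of block $n$, after passing to a sparser subsequence of blocks (e.g. $R=2^{2^n}$) if needed. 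Hence, listing $F(X)=\{w_k\}$ in order, we get $|w_{k+1}|\ge\lambda|w_k|$ with $\lambda=e^{c_0/2}>1$. Finally, I would verify directly that such a radially $\lambda$-separated set has Nagata dimension $0$: at scale $s$, group the points with $|w|\le s/(\lambda-1)$ into one set, whose diameter is $\lesssim s$, and leave every other point as a singleton, since each such point is at distance $>s$ from all others. This gives $\ndim F(X)=0$.
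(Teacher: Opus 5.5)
Your route differs from the paper's: you use the logarithmic derivative along the ray where $\Re q$ grows fastest, blocks of unit-spaced points, and geometric growth of $|F|$. It is viable in outline, but Step 2 (the part you flag as the heart) has a genuine gap. You claim that for \emph{every} large $n$ there is an interval $I_n\subset[2^n,2^{n+1}]$ of length $n+1$ on which $|\sum_j(te^{i\theta}-\alpha_j)^{-1}|\le c_0/2$. This is false in general, and the argument offered for it fails at two points.

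For a counterexample, take $R=2^{k^2}$ and place about $2R/k^2$ zeros on your ray, spaced $k^2/2$ apart across $[R,2R]$. Each cluster contributes at most $2/k^2$ to $\sum_j|\alpha_j|^{-1}$, so \eqref{eq:genus-one} holds. Yet every interval of length $k^2+1$ in $[R,2R]$ contains zeros, where $P'/P$ has poles. The same example exposes both failures:
\begin{itemize}
\item[(a)] The averaging estimate $\int_R^{2R}\sum\min\{1,|t-\alpha|^{-1}\}\,dt\lesssim N_R\log R$ need not give a bad set of measure $o(R)$, since here $N_R\log R\asymp R$. The hypothesis gives $N_R=o(R)$, not $N_R\log R=o(R)$.
\item[(b)] Measure $o(R)$ would not suffice anyway. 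By cancellation, the bad set here is concentrated near the zeros and near the endpoints of $[R,2R]$. It has measure $o(R)$, yet it meets every interval of length $\log_2 R+1$.
\end{itemize}
If $d\ge2$, you can keep the growing bound $\tfrac12 d|a|t^{d-1}$ from Step 1. Then the crude pigeonhole argument already gives stretches of length $\to\infty$ at distance $\ge1$ from all zeros, where the sum is $O(N_R)=o(R)$. So the real difficulty is $d=1$.

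The missing idea is that infinitely many scales suffice, chosen where the annulus is sparse.
\begin{itemize}
\item[(i)] Since $\sum_k n(2^k)2^{-k}<\infty$ while $\sum_k 1/k=\infty$, there are infinitely many dyadic $R$ with $N_R\le\epsilon R/\log R$.
\item[(ii)] At such $R$, cut $[R,2R]$ into intervals $J$ of length $L=\log R$ with centres $c_J$. At most $3N_R\le3\epsilon R/L$ of them have an annulus zero within distance $L$ of $c_Je^{i\theta}$.
\item[(iii)] For the remaining $J$, $\sup_{t\in J}\sum_{\mathrm{ann}}|te^{i\theta}-\alpha|^{-1}\le 2\sum_{\mathrm{ann}}\min\{L^{-1},|c_Je^{i\theta}-\alpha|^{-1}\}$. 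The average of the right side over all $J$ is $O(N_R\log R/R)=O(\epsilon)$, so Markov's inequality yields a good $J$.
\item[(iv)] Choosing such scales with $R_{k+1}\ge4R_k$ also makes your Step 3 rigorous. At a point $z$ of a good $J$, the bound $\sum_{\mathrm{ann}}|z-\alpha|^{-1}\le c$ gives $\prod_{\mathrm{ann}}|z-\alpha|\ge(N_R/c)^{N_R}$ by AM--GM. Hence $\sum_{\mathrm{ann}}\log|1-z/\alpha|\ge-N_R\log(4cR/N_R)=-o(R)$. The other two groups of zeros contribute $\ge-o(R)$ trivially, which gives the lower bound on $\log|P|$ you need.
\item[(v)] In your final step the threshold must be $\lambda s/(\lambda-1)$, not $s/(\lambda-1)$, to separate a point from its predecessor. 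Alternatively, cite Lemma~\ref{lemma:ratio>1_ndim_0}.
\end{itemize}

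By comparison, the paper needs no averaging. It disposes of the case $\ndim A=1$ by taking $X=A$. When $\ndim A=0$, the Nagata covers of $A$ at scale $s_n=c_A^n$ produce real intervals of length $\approx s_n$ at distance $\ge\varepsilon s_n$ from all of $A$. So each of the $J(k)=o(s_n)$ relevant zeros changes $|P|$ across a $\delta$-step by a factor at least $1-\delta/(\varepsilon s_n)$. Jumps between intervals are paid for by the factor $e^{c_0\varepsilon s_n}$ from $e^{q}$. Once repaired, your argument has the advantage of working directly for every zero set satisfying \eqref{eq:genus-one}, without that dichotomy on $\ndim A$.
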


The proof of Theorem \ref{thm:entire_genus_1_decrease_ndim} is substantially more difficult and technical than that of Theorem \ref{thm:entire_func_increase_ndim}. The restriction in Theorem \ref{thm:entire_genus_1_decrease_ndim} to entire functions whose zeroset $A$ satisfies the genus one condition \eqref{eq:genus-one} is not essential; it streamlines certain aspects of the proof. We expect that a similar conclusion holds true for entire functions of arbitrary genus.

The distortion of dimension under quasiconformal mappings has been studied extensively since the foundational work of Gehring and V\"ais\"al\"a, \cite{gv:hausdorff}. In the planar case, they showed that quasiconformal maps distort Hausdorff dimension in a controlled manner, and in particular, preserve the dimension of sets of dimension two. As noted above, quasiconformal maps preserve Nagata dimension of compact subsets of their domain.

Porosity of sets has been investigated in connection with dynamical systems, mapping theory, and fractal geometry. Informally, a set $E \subset \R^n$ is porous in $\R^n$ if every ball $B(x,r)$ in $\R^n$ contains a subball of comparable radius contained fully in $\R^n \setminus E$. For the precise definition, see \S~\ref{sec:porosity}. Germane to this paper is the relationship between porosity and Nagata dimension. To wit, a set $E \subset \R^n$ is porous in $\R^n$ if and only if $\ndim E \le n-1$. See, e.g., Theorem \ref{th:equivalence}.

As observed by V\"ais\"al\"a in \cite[Introduction]{Vaisala1987Porous}, if $f:\R^n \to \R^n$ is quasiconformal and $E$ is porous in $\R^n$, then $f(E)$ is also porous in $\R^n$. In \cite[Theorem 4.2]{Vaisala1987Porous} V\"ais\"al\"a establishes the (more difficult) fact that the same conclusion holds true if $f$ is only assumed to be a quasisymmetric embedding of $E \subset \R^n$ into $\R^n$. 

It follows from the previous remarks that quasiconformal maps $f:\Omega \to \Omega'$ between domains in $\R^n$ preserve porosity (in $\R^n$) for compact subsets of $\Omega$. Such a conclusion fails to hold, however, if the compactness restriction is removed. In fact, planar conformal maps can map sets of Nagata dimension strictly less than two to sets of Nagata dimension two. Denote by $\D$ the open unit disc in $\C$.

\begin{theorem}[Conformal mappings can destroy porosity]\label{thm:spiral_domain_porosity_change}
There exists a simply connected domain $\Omega \subsetneq \C$,  a conformal map $f : \D \to \Omega$, and a diameter $\ell \subset \D$ such that $\ndim \ell = 1$ and $\ndim f(\ell) = 2$. In particular, $f$ maps a porous subset $\ell$ of $\C$ to a non-porous subset $f(\ell) \subset \C$.
\end{theorem}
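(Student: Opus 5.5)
The plan is to build $\Omega$ as a spiral domain. The image of a diameter should wind around a point (or out to infinity) so that the curve $f(\ell)$ comes within distance comparable to the local scale of every point of some disc. Then $f(\ell)$ is not porous in $\C$, while $\ell$, a line segment, has $\ndim \ell = 1$ and is porous. By Theorem \ref{th:equivalence}, a subset of $\C$ has $\ndim\le 1$ if and only if it is porous. Also, $\ndim$ of a planar set is at most $\dim_A \le 2$. So it suffices to make $f(\ell)$ non-porous, and then $\ndim f(\ell)=2$ follows.

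First I would construct $\Omega$. Take a logarithmic-type spiral strip whose successive turns accumulate on the unit circle from inside, say $\Omega = \{ re^{i\theta}: \rho(\theta) < r < \rho(\theta+2\pi)\}$ with $\rho$ increasing to $1$ as $\theta\to\infty$ and $\rho\to 0$ as $\theta\to-\infty$. The widths of the turns should shrink to zero, but only slowly. Alternatively, and more cleanly, I would take the image of a horizontal strip $S=\{0<\operatorname{Im} w<\pi\}$ under a conformal spiral map such as $w\mapsto \exp((1+ i a) w)$. Composing with the conformal map $\D\to S$ sends a diameter to the central line $\operatorname{Im} w=\pi/2$, which becomes a logarithmic spiral. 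A single logarithmic spiral is, however, still porous, because it is self-similar with uniformly separated turns. So I would modify the strip: I would use a domain $S'$ whose image consists of turns that become relatively thinner. For instance, the $k$-th revolution near radius $1$ would have width $\delta_k$ with $\delta_k/(\text{distance to the previous turn})\to 0$ replaced by $\delta_{k}\to 0$ superexponentially while the turns accumulate on a circle. In such a domain, consecutive turns are separated only by thin walls. Then every disc of radius $r$ centered on the limit circle meets many turns, and the core curve passes within $o(r)$ of every point of that disc.

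Next I would identify $f(\ell)$ with the \emph{hyperbolic geodesic} joining the two prime ends at the ends of the spiral. I would then control its position. By conformal invariance and standard estimates (the Ahlfors distortion theorem, or symmetry when the domain is symmetric under a reflection fixing the core), the geodesic stays within the middle portion of each turn: its distance to $\partial\Omega$ is comparable to the local width $\delta_k$. Thus $f(\ell)$ is $C\delta_k$-dense in the $k$-th turn. Non-porosity then reduces to a pure geometry check. Take any point $p$ of the limit circle and any small $r$. The ball $B(p,r)$ meets turns $k$ with $\delta_k\ll r$. Every sub-ball $B(y,\varepsilon r)\subset B(p,r)$ either contains points of a turn, and hence lies within $C\delta_k$ of $f(\ell)$, or lies in a wall. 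The walls can be taken of thickness at most $\delta_k$ as well. Hence $B(y,\varepsilon r)$ meets $f(\ell)$ once $\delta_k<\varepsilon r/C$. This violates porosity, and so $\ndim f(\ell)=2$. Since $f(\ell)$ is non-compact in $\Omega$, the Lang--Schlichenmaier invariance is not contradicted.

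The main obstacle is the second step: guaranteeing that the geodesic actually sweeps through every turn near its center, uniformly in the degenerating geometry. I would handle this by building $\Omega$ as the image of the strip $S$ under an explicit map, so that the "core line" is the image of the line of symmetry. For example, I could choose $\Omega$ symmetric under an anticonformal reflection whose fixed set is the core curve; then the geodesic coincides with that fixed curve by uniqueness. If explicit symmetry is unavailable, I would instead use the quasi-hyperbolic comparison: hyperbolic geodesics are quasihyperbolic quasi-geodesics, and these stay at distance $\gtrsim$ the local width from the boundary in a uniform-like chain of turns. That comparison yields the $C\delta_k$-density directly.
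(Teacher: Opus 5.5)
\textbf{There is a genuine gap: the construction you settle on would produce a porous curve.} Your reduction to non-porosity via Theorem~\ref{th:equivalence} is right, and so is your remark that a logarithmic spiral is porous. Your first sketch, with turn widths shrinking ``only slowly'', points the right way. But the condition you actually impose is the opposite of what is needed.

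What forces non-porosity is not that the turns are thin in absolute terms. It is that the spacing between consecutive turns (turn plus wall) is $o(\rho)$ at distance $\rho$ from the accumulation set. Equivalently, the number of turns in each dyadic shell about that set must tend to infinity. Suppose, as you propose, turns and walls have widths $O(\delta_k)$ with $\delta_k\to0$ superexponentially. Then the $k$-th turn lies at distance $\sum_{j\ge k}O(\delta_j)\approx\delta_k$ from the limit circle, so every scale sees only boundedly many turns. Take for example $\sum_{j>k}\delta_j\ll r\ll\delta_k$ and a disc of radius $r$ internally tangent to the circle. Apart from a layer of thickness $\ll r$ next to the circle, that disc consists of points outside $\Omega$ or points of the $k$-th turn within $2r\ll\delta_k$ of $\partial\Omega$. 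By your own estimate the geodesic avoids the latter, so the disc contains a disc of radius $\approx r/2$ missing $f(\ell)$.

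Your geometric check also fails independently of this. A disc $B(p,r)$ centred on the limit circle contains $B((1+r/2)p,r/2)$, which lies outside $\overline{\D}\supset\Omega$. So porosity can never be contradicted at such centres. The paper's polynomial spiral domain \eqref{eq:omega-p} avoids both problems. Its turns accumulate at the single point $0$ from every direction. Near radius $r_0=(2k_0)^{-p}$, consecutive turns are at most $\tfrac{3p}{2k_0}\,r_0$ apart along every ray, so all of $B(0,r_0)$ is filled by turns that are thin relative to $r_0$.

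\textbf{Your ``main obstacle'' is a detour, and your resolution of it is incomplete.} The symmetry argument needs an explicit univalent map of a strip whose centre line goes to a non-porous curve. The only explicit map you give, $\exp((1+ia)w)$, yields the porous logarithmic spiral.

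The paper needs no information about where the geodesic runs. In Lemma~\ref{lem:helper}, each radial cross-section $[a_k,b_k]e^{\bi\alpha}$ of a turn separates the two ends of the Jordan domain $\Omega_p$. Hence \emph{any} continuum in $\overline{\Omega_p}$ joining $-1$ to $0$ meets all of them; this includes $f([-1,1])$, via the Carath\'eodory extension. Its gaps along each ray are therefore bounded by the spacings $b_k-a_{k+1}$, and by $r_0-a_{k_0+1}$ near the edge, all of which are $<2cr_0$. This contradicts $c$-porosity in $B(0,r_0)$. Once the geometry has the right quantitative accumulation rate, this crossing argument makes the position of $f(\ell)$ inside each turn irrelevant.
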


Since porosity of subsets of $\R^n$ is also characterized by the condition that the Assouad dimension is strictly less than $n$ \cite{luu:assouad}, we also obtain the following corollary.

\begin{corollary}\label{cor:spiral_domain_Assouad_change} 
There exists a conformal map $f : \D \to \Omega$ onto a simply connected domain $\Omega \subsetneq \mathbb{C}$ and a diameter $\ell \subset \D$ such that $\dim_A \ell  = 1$ and $\dim_A f(\ell) = 2$. 
\end{corollary}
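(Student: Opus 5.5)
This corollary follows from Theorem~\ref{thm:spiral_domain_porosity_change} and the characterization of porosity by Assouad dimension \cite{luu:assouad}. We take the conformal map $f:\D\to\Omega$ and the diameter $\ell$ supplied by Theorem~\ref{thm:spiral_domain_porosity_change}, and compute the Assouad dimensions of $\ell$ and $f(\ell)$ separately.

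For $\ell$, we argue directly. The diameter $\ell$ is an open line segment in $\C$, so it is bi-Lipschitz equivalent to an interval in $\R$. Hence $\dim_A \ell = 1$: the upper bound comes from covering subintervals of length $R$ by about $R/r$ intervals of length $r$, and the lower bound holds because any set containing a nondegenerate interval has Assouad dimension at least its topological dimension.

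For $f(\ell)$, the upper bound $\dim_A f(\ell)\le 2$ is automatic, since $f(\ell)\subset\C$ and $\dim_A \C = 2$ by monotonicity. For the lower bound, Theorem~\ref{thm:spiral_domain_porosity_change} tells us that $f(\ell)$ is not porous in $\C$. Luukkainen's theorem \cite{luu:assouad} states that a set $E\subset\R^n$ is porous if and only if $\dim_A E<n$. Applying this with $n=2$ and $E=f(\ell)$ gives $\dim_A f(\ell)\ge 2$, so $\dim_A f(\ell)=2$.

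There is essentially no obstacle here. The only point needing care is that the nonporosity in Theorem~\ref{thm:spiral_domain_porosity_change} is porosity in the ambient space $\C$, which is the same setting as Luukkainen's characterization. Alternatively, one can skip the porosity step: combine $\ndim f(\ell)=2$ with the inequality $\ndim X\le\dim_A X$ from \cite{DonneRajala2015} to get $\dim_A f(\ell)\ge 2$ directly. That route also confirms $\dim_A\ell\ge\ndim\ell=1$.
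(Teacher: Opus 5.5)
Your proposal is correct and takes the same approach as the paper. The paper derives the corollary by combining the non-porosity of $f(\ell)$ from Theorem~\ref{thm:spiral_domain_porosity_change} with Luukkainen's characterization of porous subsets of $\R^n$ as those with Assouad dimension strictly less than $n$, treats $\dim_A \ell = 1$ as immediate, and your alternative via $\ndim f(\ell) = 2$ and $\ndim X \le \dim_A X$ from \cite{DonneRajala2015} is equally valid.
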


Theorem 1.2 in \cite{ChrontsiosTyson2023} implies that quasiconformal maps between domains in $\R^n$ preserve the Assouad dimension for compact subsets of the domain with Assouad dimension $n$. Moreover, global quasiconformal maps of $\R^n$ preserve the Assouad dimension of arbitrary sets of Assouad dimension $n$. Corollary \ref{cor:spiral_domain_Assouad_change} shows that the assumption of compactness in the former conclusion, and the assumption that the map is globally defined in the latter conclusion, are both necessary.

\subsection{Structure of the paper} 
Section~\ref{sec:background} provides the necessary definitions and background. We recall the definitions of the Nagata dimension, quasisymmetric, quasiconformal, and quasiregular mappings, together with their basic properties. This section also includes examples illustrating when metric spaces have Nagata dimension zero or one. Section~\ref{sec:analytic_and_rational_maps_ndim} is devoted to results establishing the invariance of Nagata dimension. In subsection \ref{subsec:analytic_maps_ndim}, we prove Theorem~\ref{mainthm1:polynomial_preserve_ndim}, asserting that analytic maps preserve the Nagata dimension of compact subsets of their domains. As a consequence, planar quasiregular maps also preserve the Nagata dimension of compact subsets. Subsection~\ref{sec:rational_maps_ndim} discusses the invariance of Nagata dimension under meromorphic maps and contains the proof of Theorem~\ref{mainthm2:rational_preserve_ndim}. Section~\ref{sec:entire_maps_ndim} contains the proofs of Theorem~\ref{thm:entire_func_increase_ndim} and Theorem~\ref{thm:entire_genus_1_decrease_ndim}, which describe the increase and decrease of Nagata dimension under entire functions. Section~\ref{sec:porosity}
explores the relationship between porosity,
Nagata and Assouad dimensions, and quasiconformal mappings,
and includes the proof of
Theorem~\ref{thm:spiral_domain_porosity_change}. Finally, section~\ref{sec:open_questions} concludes the paper with several open questions and further directions motivated by our results.

\subsection{Notation}

We denote by $\#S$ the cardinality of a set $S$. In a metric space $(X,d)$, we write $\diam A = \sup \{ d(a,a') :a,a' \in A\}$ for the diameter of $A \subset X$ and $\dist (A,B) = \inf \{ d(a,b):a \in A, b\in B\}$ for the distance between sets $A,B \subset X$. The open ball with center $x$ and radius $r>0$ in $(X,d)$ will be denoted $B(x,r)$, while the corresponding closed ball will be denoted $\overline{B}(x,r)$.

The length of an interval $I \subset \R$ is denoted by $\ell(I) := \sup I - \inf I$.

A set $A$ contained in a domain $\Omega \subset \R^n$ is said to be {\it compactly contained} in $\Omega$ if its closure $\overline{A}$ is a compact subset of $\Omega$. We write $A \Subset \Omega$ to indicate that $A$ is compactly contained in $\Omega$.

\subsection*{Acknowledgments}
We thank Guy C. David, Sylvester Eriksson-Bique, Urs Lang, and Vyron Vellis for helpful conversations on the topic of Nagata dimension and its distortion properties. The construction in Theorem \ref{thm:spiral_domain_porosity_change} was suggested to the authors by Sylvester Eriksson-Bique, and we are grateful to him for permission to include it in this paper.

The first author gratefully acknowledges the support and hospitality of the Hausdorff Institute for Mathematics, Bonn. This work was funded by the Deutsche Forschungsgemeinschaft (DfG, German Research Foundation) under the German Excellence Strategy – EXC-2047/1 – 390685813 and the McNamara Grant by World Bank (2025). The second author is supported by the Simons Foundation under Simons Collaboration Grant \#852888.

\section{Background}\label{sec:background}

\subsection{Nagata dimension}
Let $\mathcal{B} = \{B_i\}_{i \in I}$ be a family of subsets of a metric space $(X,d)$. The family $\mathcal{B}$ is called \emph{$s$-bounded} for some constant $s \ge 0$ if $\diam B_i \le s$ for all $i \in I$. For $s > 0$, the \emph{$s$-multiplicity} of $\mathcal{B}$ is the infimum of all integers $n \ge 0$ such that every subset of $X$ with diameter $\le s$ meets at most $n$ members of the family.

\begin{definition}\label{def:nagata_dim}
For a metric space $X$, the \emph{Nagata dimension} (or \emph{Assouad--Nagata dimension}) $\ndim X$ is the infimum of all integers $n$ with the following property: There exists a constant $c > 0$ such that for all $s > 0$, the space $X$ admits a $cs$-bounded covering with $s$-multiplicity at most $n + 1$.
\end{definition}

Nagata dimension was first introduced and named by Assouad in \cite{AssouadOriginal}. 

\begin{theorem}[Finite stability, Theorem 2.7 in \cite{LangSchlichenmaier2005}]\label{thm:finite_stability_ndim}
Let $X = Y \cup Z$. Then $\ndim X = \sup\{\ndim Y, \ndim Z\}$.
\end{theorem}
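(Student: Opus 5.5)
The inequality $\ndim X \ge \sup\{\ndim Y, \ndim Z\}$ is immediate from monotonicity. If $\mathcal B$ is a $cs$-bounded cover of $X$ with $s$-multiplicity at most $n+1$, then $\{B\cap Y : B\in\mathcal B\}$ is a $cs$-bounded cover of $Y$. Any subset of $Y$ of diameter $\le s$ is a subset of $X$, so it meets at most $n+1$ of these traces. So the real content is the upper bound. Set $n=\max\{\ndim Y,\ndim Z\}$, which we may assume finite, and fix constants $c_Y, c_Z$ witnessing the definition for $Y$ and $Z$. We must produce, for every $s>0$, a $Cs$-bounded cover of $X$ with $s$-multiplicity $\le n+1$. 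The naive union of covers of $Y$ and $Z$ only gives multiplicity $2n+2$, so a merging argument is needed.

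I would follow the Lang--Schlichenmaier route, working first with an equivalent formulation of Nagata dimension in terms of colourings. The reformulation is: $\ndim X\le n$ iff there is $c$ such that for every $s>0$ there is a $cs$-bounded cover $\mathcal B=\bigcup_{k=0}^n\mathcal B_k$ in which each family $\mathcal B_k$ is $s$-disjoint, meaning distinct members are at distance $> s$. One direction is trivial: $s$-disjoint families give $s$-multiplicity at most $n+1$, up to replacing $s$ by $s/2$. The converse is the standard but nontrivial step. It takes a cover of bounded multiplicity at a larger scale $s' = \lambda s$ and builds colour classes indexed by subsets of the cover, e.g.\ via a partition-of-unity / nerve argument giving a Lipschitz map to a simplicial complex of dimension $n$. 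The pullbacks of open stars of barycentric subdivision vertices, grouped by dimension, give $n+1$ colours of $s$-disjoint sets. I expect this equivalence to be the main technical obstacle, and I would prove it first as a lemma, with constants depending only on $n$ and $c$.

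With the colour formulation, the merging runs as follows. Fix $s$. Take a $c_Z(Ls)$-bounded cover of $Z$ by $Ls$-disjoint colour classes $\mathcal C_0,\dots,\mathcal C_n$ at a large scale $Ls$, with $L$ a constant to be chosen. Take a $c_Y s$-bounded cover of $Y$ by $s$-disjoint colour classes $\mathcal D_0,\dots,\mathcal D_n$ at scale $s$. For each $k$, merge class $k$ as follows: every set $D\in\mathcal D_k$ whose $s$-neighbourhood meets some $C\in\mathcal C_k$ is absorbed into $C$. Since the $C$'s are $Ls$-disjoint with $L\gg c_Y$, each such $D$ is near at most one $C$. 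The new sets $C' = C\cup\bigcup\{D\}$ have diameter $\le (c_ZL + 2c_Y+2)s$. The unabsorbed sets of $\mathcal D_k$ are at distance $>s$ from every $C'$, provided $L$ exceeds a constant times $c_Y$. The $C'$ remain $s$-disjoint from each other, because $Ls-2(c_Y+1)s>s$.

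Thus each merged class $k$ is $s$-disjoint. There are $n+1$ classes and they cover $Y\cup Z$, with diameters bounded by $C s$ where $C=c_ZL+2c_Y+2$ is independent of $s$. Translating back via the easy direction of the lemma gives $\ndim X\le n$. The only delicate bookkeeping is choosing $L$ in terms of $c_Y$ and checking the distance estimates. The real work lies in establishing the colour-class characterization.
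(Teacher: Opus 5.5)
Your proposal is correct and is essentially the Lang--Schlichenmaier argument that the paper cites without proof. You reduce to the characterization by $n+1$ colour classes of $s$-disjoint sets, proved via a Lipschitz map into the nerve, and then absorb a fine-scale colouring of $Y$ into a coarse-scale colouring of $Z$ colour by colour. Two small points: in the merging step, $c_Y$ and $c_Z$ should be the colour constants $c'(n,c_Y)$ and $c'(n,c_Z)$ produced by your lemma rather than the original Nagata constants. Also, no passage from $s$ to $s/2$ is needed, since pointwise separation $>s$ between distinct members of a family already forces $s$-multiplicity one.
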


However, Nagata dimension is not countably stable. For instance, $\ndim \Z = 1$ whereas $\ndim \{x\} = 0$ for each singleton $\{x\}$.

\begin{theorem}[Invariance under completion, \cite{LangSchlichenmaier2005}]\label{thm:completion_invariance_ndim}
Let $(X,d)$ be an incomplete metric space, and let $\overline{X}$ denote the metric completion of $X$. Then $\dim_N X = \dim_N \overline{X}$.
\end{theorem}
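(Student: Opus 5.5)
The plan is to show that, for each $n$, if $X$ admits a $cs$-bounded covering of $s$-multiplicity at most $n+1$ for every $s>0$, then so does $\overline{X}$ with a slightly worse constant, and conversely. The inequality $\ndim X \le \ndim \overline{X}$ is the easy direction. Given a covering $\{B_i\}$ of $\overline{X}$, the traces $\{B_i\cap X\}$ cover $X$. They have diameter at most $\diam B_i\le cs$. Any subset of $X$ of diameter $\le s$ is also a subset of $\overline X$ of diameter $\le s$, so it meets at most $n+1$ of the traces. Hence the same constant $c$ works for $X$.

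For the reverse inequality, suppose $\ndim X=n$ with constant $c$. Fix $s>0$ and take a $c(3s)$-bounded covering $\{B_i\}$ of $X$ with $3s$-multiplicity at most $n+1$. Define the enlarged sets
\[
B_i' := \{\,y\in\overline{X} : \dist(y,B_i)<s\,\}\subset \overline X .
\]
\begin{itemize}
\item \emph{Covering.} Each $y\in\overline X$ is a limit of points of $X$, so $y$ lies within distance $<s$ of some $x\in X$. That $x$ belongs to some $B_i$, so $y\in B_i'$.
\item \emph{Diameter.} $\diam B_i'\le \diam B_i+2s\le (3c+2)s$.
\item \emph{Multiplicity.} Let $E\subset\overline X$ with $\diam E\le s$, and suppose $E$ meets $B_{i_1}',\dots,B_{i_k}'$. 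Pick $y_j\in E\cap B_{i_j}'$ and $x_j\in B_{i_j}$ with $d(x_j,y_j)<s$. The set $F=\{x_1,\dots,x_k\}\subset X$ has diameter $<s+s+s=3s$ and meets each $B_{i_j}$. Therefore $k\le n+1$.
\end{itemize}
This gives, for every $s>0$, an $(3c+2)s$-bounded covering of $\overline X$ with $s$-multiplicity at most $n+1$, so $\ndim\overline X\le n$.

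The only delicate step is the multiplicity estimate. It is why the covering of $X$ is taken at the larger scale $3s$ rather than $s$: this absorbs the perturbation by $2s$ incurred when replacing points of $\overline X$ by nearby points of $X$. One must also handle the case where $F$ has fewer than $k$ distinct points (several $x_j$ may coincide). This is harmless, because the multiplicity bound counts members of the family met by $F$, not points of $F$. If $\ndim X=\infty$ the inequality $\ndim\overline X\le\ndim X$ is trivial. Combining the two directions yields $\ndim X=\ndim\overline X$.
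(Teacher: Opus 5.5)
Your proof is correct. Restricting a cover of $\overline X$ to $X$ gives $\ndim X\le\ndim\overline X$ with the same constant. For the other direction, take a $3cs$-bounded cover of $X$ with $3s$-multiplicity at most $n+1$ and thicken it by $s$ inside $\overline X$. This gives a $(3c+2)s$-bounded cover of $\overline X$ whose $s$-multiplicity is at most $n+1$. Working at scale $3s$ correctly absorbs the two perturbations of size $<s$ in the multiplicity estimate.

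The paper gives no proof of this statement; it simply cites Lang--Schlichenmaier, so there is no in-paper argument to compare with. Your version is self-contained and uses only that $X$ is dense in $\overline X$. It therefore shows $\ndim X=\ndim Y$ whenever $X$ is a dense subset of any metric space $Y$. That is exactly the ``invariance under closure'' form in which the paper later uses the result for subsets of $\C$ and $\hatc$, e.g.\ in the proofs of Theorem~\ref{thm:analytic_preserve_ndim_compact} and Lemma~\ref{lemma:meromorphic_compact_ndim}.
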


\begin{lemma}[Exponential sequence]\label{lemma:exp_sequence}
For $\lambda>1$, let $X = \{\lambda^n \,:\, n \in \N\} \subset \R$ with the Euclidean metric. Then $\ndim X = 0$. In particular, for $c = \tfrac{\lambda}{\lambda - 1}$ and any $s > 0$, the space $X$ admits a $cs$-bounded covering with $s$-multiplicity $1$.
\end{lemma}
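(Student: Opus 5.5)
We prove the lemma by writing down the covering explicitly. Fix $s>0$ and cut the sequence at the first gap that exceeds $s$. Since consecutive gaps are $\lambda^{n+1}-\lambda^n=\lambda^n(\lambda-1)$, they increase with $n$. Let $N=N(s)$ be the smallest index $n\ge 1$ with $\lambda^n(\lambda-1)>s$; if $\lambda(\lambda-1)>s$ already, then $N=1$. The cover is
\[
B_0=\{\lambda^n : n\le N\}, \qquad B_k=\{\lambda^{N+k}\}\ (k\ge 1),
\]
that is, an initial block together with singletons.

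\emph{Boundedness.} Each singleton has diameter $0$. The initial block has diameter $\lambda^N-\lambda$. If $N=1$ this is $0$. Otherwise minimality of $N$ gives $\lambda^{N-1}(\lambda-1)\le s$, so
\[
\lambda^N-\lambda<\lambda^N=\lambda\cdot\lambda^{N-1}\le \frac{\lambda s}{\lambda-1}=cs.
\]
So the family is $cs$-bounded with $c=\lambda/(\lambda-1)$.

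\emph{Multiplicity.} We claim distinct members of the family lie at distance $>s$ from each other. A subset of diameter $\le s$ then meets at most one member, so the $s$-multiplicity is $1$.

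For indices $n\ge N$, consecutive points $\lambda^n,\lambda^{n+1}$ are at distance $\lambda^n(\lambda-1)\ge\lambda^N(\lambda-1)>s$. The distance between any two distinct members is at least the gap between some consecutive pair $\lambda^n,\lambda^{n+1}$ with $n\ge N$. Indeed, the gap from $B_0$ to $B_1$ is exactly $\lambda^{N+1}-\lambda^N$, and the gap between $B_j$ and $B_k$ is at least the gap between two consecutive points from index $N$ on. So every such distance exceeds $s$, which proves the claim.

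Hence $\ndim X=0$ with the stated constant. The only point needing care is choosing the cutoff $N$ so that both estimates hold at once. We chose it as the first index whose gap exceeds $s$: minimality of $N$ gives the diameter bound on $B_0$, and the defining inequality of $N$ gives the separation. Small $s$ is handled by the case $N=1$ and large $s$ by $N$ large, so no separate treatment is needed.
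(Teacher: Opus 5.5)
Your proof is correct and follows the paper's argument: the paper also cuts at the index $N$ with $\lambda^{N-1}(\lambda-1)\le s<\lambda^{N}(\lambda-1)$ and uses the cover consisting of the initial block $\{\lambda,\dots,\lambda^N\}$ together with the singletons $\{\lambda^{N+k}\}$, $k\ge 1$. Your choice of $N$ as the minimal index with $\lambda^N(\lambda-1)>s$ also handles the case $s<\lambda-1$, where the paper's two-sided inequality has no solution $N\ge 1$.
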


\begin{proof}
Fix $s > 0$. Choose $N \in \N$ such that 
\[
    \lambda^{N-1}(\lambda - 1) \le s < \lambda^{N}(\lambda - 1).
\]
Then $|\lambda^{n+1} - \lambda^n| > s$ for all $n \ge N$, and it follows that any subset of $\R$ of diameter at most $s$ can contain at most one of the points $\{\lambda^n\}_{n \ge N}$. The cover
\[
\mathcal{B}_s := \Big\{ \{e, e^2, \ldots, e^N\}, \{e^{N+1}\}, \{e^{N+2}\}, \ldots \Big\}
\]
is $cs$-bounded with $c = \tfrac{\lambda}{\lambda - 1}$. Since every subset of diameter at most $s$ meets at most one element of $\mathcal{B}_s$, the $s$-multiplicity of this cover is $1$. Hence $\ndim X = 0$.
\end{proof}

\begin{definition}[$s$–chains and $s$–chain components]\label{def:s-chain}
Let $(X,d)$ be a metric space and let $s>0$. A finite sequence of points $x_0,x_1,\dots,x_N$ in $X$ is called an \emph{$s$–chain} if $d(x_i,x_{i+1})\le s$ for every $i$. Two points $x,y\in X$ are said to be \emph{$s$–chain connected} if there exists an $s$–chain $x=x_0,\dots,x_N=y$. The notion of $s$-chain connectedness is an equivalence relation, and maximal $s$-chain connected subsets of $X$ are called \emph{$s$–components} of $X$.
\end{definition}

\begin{remark}
It is straightforward to verify that a metric space $(X,d)$ has Nagata dimension zero if and only if there exists a constant $C_X \ge 1$ such that for every $s>0$, each $s$–chain component of $X$ has diameter at most $C_X s$.
\end{remark}

\begin{lemma}[growing chains]\label{lemma:chain-growth}
Let $(X,d)$ be a metric space. Suppose that there exists $\delta>0$ and a sequence of finite $\delta$–chains $C_n=\{x^n_0,\dots,x^n_{N_n}\}\subset X$ such that $\diam C_n \stackrel{n \to \infty}{\longrightarrow} \infty$. Then $\ndim X \ge 1$. In particular, if $X\subset\R$, then $\ndim X =1$.
\end{lemma}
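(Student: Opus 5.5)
The plan is to argue by contradiction using the chain-component characterization of Nagata dimension zero recorded in the Remark above. Suppose $\ndim X = 0$. Then there is a constant $C_X \ge 1$ such that for every $s>0$, each $s$-chain component of $X$ has diameter at most $C_X s$. Apply this with $s=\delta$. Each chain $C_n$ is a $\delta$-chain, so all its points are pairwise $\delta$-chain connected and $C_n$ lies in a single $\delta$-component of $X$. Hence
\[
\diam C_n \le C_X \delta
\]
for all $n$, which contradicts $\diam C_n \to \infty$. Therefore $\ndim X \ge 1$.

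If one prefers not to rely on the Remark, I would verify the needed direction directly from Definition~\ref{def:nagata_dim}. Suppose that for every $s$ there is a $cs$-bounded cover of $X$ with $s$-multiplicity $1$. Take $s=\delta$. Any two consecutive points $x_i, x_{i+1}$ of a chain satisfy $d(x_i,x_{i+1}) \le \delta$, so the two-point set $\{x_i,x_{i+1}\}$ has diameter at most $\delta$. By the multiplicity bound, this set meets at most one member of the cover. Since the cover covers both points, they lie in the same member $B$. Inducting along the chain, all of $C_n$ lies in a single member $B$, so
\[
\diam C_n \le \diam B \le c\delta,
\]
again a contradiction. This direct argument is the only step requiring any care, and it is straightforward.

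For the ``in particular'' clause, take $X \subset \R$. We need the upper bound $\ndim X \le 1$. Monotonicity of Nagata dimension under passing to subsets holds by restricting covers: the restricted cover stays $cs$-bounded and its multiplicity does not increase. So it suffices to show $\ndim \R \le 1$. For this, cover $\R$ by the intervals $[ks,(k+1)s]$, $k\in\Z$. This cover is $s$-bounded, and any set of diameter at most $s$ meets at most $3$ of these intervals. To obtain the sharper bound $2 = n+1$ with $n=1$, I would instead use half-open intervals $[ks,(k+1)s)$ of length $2s$, say $[2ks,2(k+1)s)$. A set of diameter at most $s$ then meets at most two of them, so $c=2$ works. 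Combining this with the lower bound gives $\ndim X = 1$.
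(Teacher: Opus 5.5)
Your proof is correct and is essentially the paper's proof: assuming $\ndim X = 0$, you apply the chain-component characterization at scale $s=\delta$ to get $\diam C_n \le C_X\delta$ for all $n$, which contradicts $\diam C_n \to \infty$. Your direct check from the definition and your explicit cover $\{[2ks,2(k+1)s)\}_{k\in\Z}$ showing $\ndim \R \le 1$ (a bound the paper only asserts) are valid additions; note only the wording slip that the intervals $[2ks,2(k+1)s)$, not $[ks,(k+1)s)$, are the ones of length $2s$.
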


\begin{proof}
If $\ndim X = 0$, then there exists $C_X \ge 1$ such that every $s$–chain component of $X$ has diameter $\le C_X s$. Fix $s=\delta$. Each $C_n$ is a $\delta$–chain, and hence is contained in a $\delta$–chain component. Thus $\diam C_n \le C_X \delta$ for all $n$, which contradicts the assumption on $\diam C_n$. Hence $\ndim X \ge 1$. If $X\subset\R$, then $\ndim X \le 1$ and hence $\ndim X =1$.
\end{proof}

\begin{corollary}\label{cor:growing_arithmetic_patches}
Let $X\subset\R$. If for some $\delta>0$ there exist finite arithmetic progressions $\{a_n,a_n+\delta,a_n+2\delta,\dots,a_n+M_n\delta\} \subset X$ with $M_n\to\infty$, then $\ndim X=1$.
\end{corollary}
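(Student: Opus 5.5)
The plan is to reduce Corollary~\ref{cor:growing_arithmetic_patches} directly to Lemma~\ref{lemma:chain-growth}. I will take the given arithmetic progressions as the chains:
\[
C_n := \{a_n, a_n+\delta, \dots, a_n+M_n\delta\} \subset X,
\qquad x^n_k := a_n + k\delta \quad (0 \le k \le M_n).
\]

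First, each $C_n$ is a $\delta$-chain in the sense of Definition~\ref{def:s-chain}. Consecutive points satisfy
\[
|x^n_{k+1}-x^n_k| = \delta \le \delta ,
\]
which is exactly the required condition.

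Second, the diameters grow without bound. Since $C_n\subset\R$, its diameter is the distance between its extreme points, so
\[
\diam C_n = M_n\delta .
\]
Because $\delta>0$ is fixed and $M_n\to\infty$, we get $\diam C_n\to\infty$.

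Third, Lemma~\ref{lemma:chain-growth} now gives $\ndim X\ge 1$. Its final clause handles subsets of $\R$: there the upper bound $\ndim X\le 1$ holds, and combining the two yields $\ndim X=1$.

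The only step with any content is that upper bound, and the paper asserts it inside Lemma~\ref{lemma:chain-growth} as a standard fact. If it needed justification, I would use the cover of $X$ by the sets $X\cap[ks,(k+1)s)$ for $k\in\Z$. Each member has diameter at most $s$. Any subset of diameter at most $s$ lies in an interval of length $s$, which meets at most two of these half-open intervals, so the $s$-multiplicity is at most $2$. Hence $\ndim X\le 1$ with constant $c=1$.

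I expect no real obstacle. The only care point is that the same $\delta$ must be used for every $n$, which the hypothesis provides.
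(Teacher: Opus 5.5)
Your proposal is correct and matches the paper's argument, which the paper leaves implicit by stating the result as a corollary: the progressions are $\delta$-chains of diameter $M_n\delta\to\infty$, so Lemma~\ref{lemma:chain-growth} applies directly. Your check of the upper bound $\ndim X\le 1$ via the cover $\{X\cap[ks,(k+1)s)\}_{k\in\Z}$ is also correct, and it fills in a step that the paper only asserts.
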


Recall that the {\it Assouad dimension} of a metric space $(X,d)$ is the infimum of values $s>0$ for which there exists $C>0$ so that for any $x \in X$ and any $0<r<R$, the ball $B(x,R)$ can be covered by at most $C(R/r)^s$ balls of radius $r$. A metric space $(X,d)$ has finite Assouad dimension if and only if it is doubling. As noted in the introduction,
\begin{equation}\label{eq:n-to-a}
\dim_N X \le \dim_A X
\end{equation}
for any metric space $X$ \cite[Theorem 1.1]{DonneRajala2015}.

\subsection{Quasisymmetric maps and ultrametric spaces}

\begin{definition}\label{def:QS}
Let $\eta : [0, \infty) \to [0, \infty)$ be a homeomorphism. A homeomorphism  $f : (X, d_X) \to (Y, d_Y)$ between metric spaces is \emph{$\eta$-quasisymmetric} if for all $x, y, z \in X$, $x \ne z$, we have
\[
\frac{d_Y(f(x), f(y))}{d_Y(f(x), f(z))} \le \eta\!\left( \frac{d_X(x, y)}{d_X(x, z)} \right).
\]
A homeomorphism $f : (X, d_X) \to (Y, d_Y)$ is \emph{quasisymmetric} if it is $\eta$-quasisymmetric for some homeomorphism $\eta$, and metric spaces $X$ and $Y$ are {\it quasisymmetrically equivalent} if there exists a quasisymmetric map $f:X \to Y$.
\end{definition}

Foundations for the theory of quasisymmetric maps in metric spaces were laid by Tukia and V\"ais\"al\"a \cite{TukiaVaisala1980}. For more background see \cite[Chapters 10-12]{Heinonen2001LecturesonAnalysis}.

\begin{theorem}[Theorem 1.2, \cite{LangSchlichenmaier2005}]\label{thm:QS_invariance_ndim}
If $X$ and $Y$ are quasisymmetrically equivalent, then $\ndim X = \ndim Y$.
\end{theorem}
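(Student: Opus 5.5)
This is Lang–Schlichenmaier's theorem, but I will sketch a self-contained argument. By symmetry it suffices to show $\ndim Y \le \ndim X$ whenever $f:X\to Y$ is $\eta$-quasisymmetric, since $f^{-1}$ is $\eta'$-quasisymmetric with $\eta'(t)=1/\eta^{-1}(1/t)$. The natural approach is to pull back good covers of $X$ to covers of $Y$. The difficulty is that a single scale $s$ in $Y$ does not correspond to a single scale in $X$. So I would not use the uniform definition directly. Instead I would use an equivalent formulation in which each set of the cover is allowed its own scale.

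\textbf{Step 1 (reformulation).} I would establish the following characterization: $\ndim X\le n$ if and only if there is $c>0$ such that for every $s>0$ there is a cover of $X$ by sets $B_i$ with $\diam B_i\le cs$ and such that every ball $B(x,s)$ meets at most $n+1$ of the $B_i$. Equivalently, one can use the Lebesgue-number / $(n+1)$-colored form: $X$ is covered by $n+1$ families $\mathcal B_0,\dots,\mathcal B_n$, each $cs$-bounded and $s$-disjoint. Passing between this and Definition~\ref{def:nagata_dim} only changes constants. I would then strengthen it to a \emph{scale-independent} version via a sequence of covers at geometric scales $s_k=\lambda^k$. This is where the structure used by Lang--Schlichenmaier (Assouad's characterization) enters: $\ndim X\le n$ if and only if for some $\lambda$ there are $(n+1)$-colored, $\lambda^k$-disjoint, $c\lambda^k$-bounded families for all $k$.

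\textbf{Step 2 (transfer of one cover).} Fix $t>0$ in $Y$. For each $x\in X$ set
\[
\rho(x)=\sup\{r: \diam f(B(x,r))\le t\}.
\]
Quasisymmetry makes $\rho$ roughly Lipschitz-comparable in the quasisymmetric sense. Concretely, if $d(x,x')\le \rho(x)$ then $\rho(x')\asymp\rho(x)$, with constants depending on $\eta$. The key estimate is the following: a set $A\subset X$ with $\diam A\le C\rho(x)$ near $x$ has $\diam f(A)\le \eta(C')\,t$. Conversely, points at distance $\ge \epsilon \rho(x)$ have images at distance $\ge t/\eta(1/\epsilon)$.

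I would then build the cover of $Y$ by working at a dyadic family of scales in $X$. For each $k$, let $X_k=\{x:\rho(x)\in[\lambda^k,\lambda^{k+1})\}$. Take the $(n+1)$-colored cover of $X$ at scale $\epsilon\lambda^k$, keep the members meeting $X_k$, and map them by $f$. Each kept member has small diameter relative to $\rho$ near it, so its image has diameter $\lesssim t$. Within a fixed $k$ and color, distinct members are $\epsilon\lambda^k$-separated, so their images are $\gtrsim t$-separated.

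\textbf{Step 3 (the main obstacle).} The hard part is controlling interactions between different $k$. Members coming from different $k$ and the same color may be close in $Y$, which spoils multiplicity. Only boundedly many $k$ can interact at a given point, since $\rho$ varies slowly, so the multiplicity bound degrades to a constant multiple of $n+1$. That is not enough: we need exactly $n+1$.

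To fix this I would use the Lang--Schlichenmaier trick of a \emph{consistent} family of covers across scales. The idea is to make the covers at scale $\lambda^{k+1}$ coarsen those at scale $\lambda^k$, in a way that preserves colors. One way is to restrict attention to the interfaces between the regions $X_k$ and merge each small set into an adjacent larger set of the same color when they are close. The merged sets remain of diameter $\lesssim t$ in $Y$ by the quasisymmetric diameter estimate, because merging only ever adds sets of smaller $\rho$-scale. Same-colored sets remain $\gtrsim t$-separated, giving an $(n+1)$-colored $t$-disjoint $Ct$-bounded cover of $Y$. Hence $\ndim Y\le n$ by Step 1.

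I expect constructing this color-preserving merging (essentially showing that Nagata dimension can be computed with nested, color-consistent covers) to be the technical heart of the proof.
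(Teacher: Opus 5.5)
The paper gives no proof of this statement. It is quoted from Lang--Schlichenmaier and used as a black box, so your sketch has to stand on its own. The genuine gap is Step 3, which is the whole content of the theorem, and the reformulation you propose for it is false. Color-consistent nested covers, with each color-$i$ set at scale $\lambda^k$ contained in a color-$i$ set at scale $\lambda^{k+1}$, do not exist even for $X=\R$, $n=1$. At a coarse scale $\lambda^K$, some interval $G$ of length $\lambda^K/3$ must miss every color-$0$ set. Otherwise, starting from a color-$0$ point you could step to the right by amounts in $[\lambda^K/3,2\lambda^K/3]$ indefinitely. Since distinct color-$0$ sets are $\lambda^K$-apart, the whole chain would lie in one set of diameter at most $c\lambda^K$, which is absurd. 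Now take a fine scale with $c\lambda^k<\lambda^K/3$. The connected interval $G$ cannot be covered by the pairwise separated color-$1$ sets alone, so it meets a fine color-$0$ set. By nesting, that set lies in a coarse color-$0$ set, contradicting the choice of $G$.

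The interface merging you mention in passing does close the gap, but only after a correction to Step 2 and several checks your sketch omits.

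\emph{Correction to Step 2.} The estimate ``$\diam A\le C\rho(x)$ implies $\diam f(A)\lesssim t$'' fails for $C\ge 1$ in spaces with gaps. For $X=\{0\}\cup[1,\infty)$, $f=\id$ and $t<1$, one has $\rho(0)=1$ but $\diam f(\{0,1\})=1$. Only sets inside $B(x,r)$ with $r<\rho(x)$ are controlled, so you must take $c\varepsilon<\tfrac12$. In the transfer, quasisymmetry enters only through the lower bound: $d(a,b)\ge\theta\rho(a)$ implies $|f(a)-f(b)|\ge t/(2\eta(1/\theta))$. Also, $\rho$ is in fact $1$-Lipschitz directly from its definition.

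\emph{The merging argument.} It needs three verifications.

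(i) On each kept level-$k$ set, $\rho\in[(1-c\varepsilon)\lambda^k,(1+c\varepsilon)\lambda^{k+1})$. So for $\lambda$ large, points $a,b$ of kept sets whose levels differ by at least $2$ satisfy $d(a,b)\ge|\rho(a)-\rho(b)|\ge\tfrac12\max\{\rho(a),\rho(b)\}$. Thus only \emph{adjacent} levels interact, not merely boundedly many, and this is what makes a single merging step enough.

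(ii) Attach a kept level-$k$ color-$i$ set $S$ to a level-$(k+1)$ color-$i$ set within distance $\varepsilon\lambda^k$ of $S$, if one exists. The target is unique once $\lambda>c+2$; otherwise $S$ could glue two coarse sets and ruin the diameter bound. By (i), no set that receives an attachment is itself attached one level higher. Hence each merged set lies in some $B(x,r)$ with $x\in X_j$ in its top piece and $r<\lambda^j\le\rho(x)$, so its image has diameter at most $t$.

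(iii) Distinct merged sets of one color are separated in every case. At the same level this holds by hypothesis. At adjacent levels the distance is at least $\varepsilon\lambda^k$: either by the merge rule, or because the lower piece was attached to a different coarse set and so lies at distance at least $\varepsilon\lambda^{k+1}-(c+1)\varepsilon\lambda^k$. At levels two or more apart it follows from (i). In every case $|f(a)-f(b)|\ge t/(2\eta((1+c\varepsilon)\lambda/\varepsilon))$.

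This yields an $(n+1)$-colored, $t$-bounded, $t/M$-disjoint cover of $Y$ with $M=M(\eta,c)$, hence $\ndim Y\le n$. The case $\diam Y\le t$, where $\rho\equiv\infty$, is trivial and must be set aside first.
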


Bi-Lipschitz maps are a particular example of quasisymmetric maps. Recall that $f : (X,d_X) \to (Y,d_Y)$ is \emph{$L$-bi-Lipschitz}, $L \ge 1$, if
\[
L^{-1} \, d_X(x, y) \le d_Y(f(x), f(y)) \le L \, d_X(x, y)
\]
for all $x, y \in X$. Every $L$-bi-Lipschitz mapping is $\eta$-quasisymmetric with $\eta(t) = L^2 t$.

\begin{definition}\label{def:ultrametric_spaces}
    A metric space $(X, d)$ is called \emph{ultrametric} if for all $x, y, z \in X$ we have $d(x, z) \le \max\{d(x, y), d(y, z)\}$.
\end{definition}

\begin{theorem}[Theorem 3.3 in {\cite{BrodskiyDydakHigesMitra2007}}] \label{thm:ndim_ultrametric_brodskiyetal}
If a metric space $(X, d)$ has Nagata dimension zero, then there exists an ultrametric $\rho$ on $X$ such that the identity map $\mathrm{id} : (X, d) \to (X, \rho)$ is bi-Lipschitz.
\end{theorem}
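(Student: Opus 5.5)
The plan is to construct $\rho$ explicitly as the ``minimax chain'' (subdominant) ultrametric below $d$, and to use Nagata dimension zero only to bound $d$ by a multiple of $\rho$. Concretely, for $x,y\in X$ I would set
\[
\rho(x,y) := \inf\Bigl\{ \max_{0\le i<N} d(x_i,x_{i+1}) \;:\; x=x_0,x_1,\dots,x_N=y \text{ a finite chain in } X \Bigr\}.
\]
Equivalently, $\rho(x,y)$ is the infimum of those $s>0$ for which $x$ and $y$ are $s$-chain connected in the sense of Definition~\ref{def:s-chain}.

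The first step is to verify that $\rho$ is an ultrametric with $\rho\le d$.
\begin{itemize}
\item Symmetry holds because a chain can be reversed.
\item The bound $\rho\le d$ follows from the trivial chain $x_0=x$, $x_1=y$.
\item For the ultrametric inequality, take a chain from $x$ to $y$ with maximal step below $\rho(x,y)+\epsilon$ and a chain from $y$ to $z$ with maximal step below $\rho(y,z)+\epsilon$. Concatenating them gives a chain from $x$ to $z$ whose maximal step is below $\max\{\rho(x,y),\rho(y,z)\}+\epsilon$. Letting $\epsilon\to0$ gives $\rho(x,z)\le\max\{\rho(x,y),\rho(y,z)\}$.
\end{itemize}
Positive definiteness is not yet established; it will come out of the next step.

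The second step, which is the only place the hypothesis enters, is the reverse inequality $d\le C\rho$. I would use the Remark preceding Lemma~\ref{lemma:chain-growth}: $\ndim X=0$ gives a constant $C_X$ such that every $s$-chain component has diameter at most $C_X s$. For completeness I would justify this from Definition~\ref{def:nagata_dim} with $n=0$. Take a $cs$-bounded cover with $s$-multiplicity $1$, and consider an $s$-chain. Each pair $\{x_i,x_{i+1}\}$ has diameter at most $s$, so it meets exactly one member of the cover, and that member contains both points. By induction the whole chain lies in a single member, so the chain has diameter at most $cs$.

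Now suppose $\rho(x,y)<s$. Then $x$ and $y$ are joined by an $s$-chain, so $d(x,y)\le C_X s$. Letting $s\downarrow\rho(x,y)$ yields
\[
d(x,y)\le C_X\,\rho(x,y).
\]
In particular, $\rho(x,y)=0$ forces $x=y$, which completes the proof that $\rho$ is a metric. Combined with the first step, this gives $C_X^{-1}d\le\rho\le d$, so the identity map $(X,d)\to(X,\rho)$ is bi-Lipschitz.

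The main point requiring care is the second step: the single-scale covering condition must be converted into control of arbitrarily long chains. The argument rests on multiplicity $1$, which forces each chain into one cover element. It also requires some care with strict versus non-strict inequalities between the chain scale and $s$, which is harmless because we pass to the limit $s\downarrow\rho(x,y)$.
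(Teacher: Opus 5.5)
Your proof is correct. The chain (subdominant) ultrametric satisfies $\rho\le d$ trivially, and $s$-multiplicity one forces every $s$-chain into a single cover element of diameter at most $cs$; this gives $d\le c\,\rho$, hence positive definiteness and the bi-Lipschitz bound. The paper does not prove this statement itself but quotes it from \cite{BrodskiyDydakHigesMitra2007}, and your self-contained chain-ultrametric argument is the standard proof of that result.
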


In particular, since bi-Lipschitz maps are quasisymmetric, $\ndim(X, \rho) =0$. Moreover, if $X$ is bi-Lipschitz equivalent to an ultrametric space, then $\ndim X = 0$. For details, see \cite{BrodskiyDydakHigesMitra2007}.

The following lemma generalizes Lemma \ref{lemma:exp_sequence}. Here we give a different proof which relies on the characterization of Nagata dimension zero in terms of ultrametrics.

\begin{lemma}\label{lemma:ratio>1_ndim_0}
Let $(X,d)$ be a metric space, fix a basepoint $o\in X$ and $\lambda>1$, and let  $Z:=\{x_k \, : \, k\in\N\}\subset X$ be a discrete sequence such that 
\[
    d(x_{k+1},o)\;\ge\; \lambda \, d(x_k,o)
\]
for all $k \in \N$. Then $\ndim Z = 0$.
\end{lemma}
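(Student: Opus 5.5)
The plan is to construct an explicit ultrametric $\rho$ on $Z$ with $\id:(Z,d)\to(Z,\rho)$ bi-Lipschitz. By the remark following Theorem~\ref{thm:ndim_ultrametric_brodskiyetal}, this gives $\ndim Z=0$. Write $r_k:=d(x_k,o)$, so that $r_{k+1}\ge\lambda r_k$ and hence $r_j\ge\lambda^{j-k}r_k$ for $j\ge k$. In particular the $r_k$ are strictly increasing, so the $x_k$ are distinct. Note also that at most one $x_k$ can equal $o$, namely $x_1$ if $r_1=0$; in that case all later $r_k$ would be forced to vanish, so in fact $r_k>0$ for all $k\ge 2$, and the possible exceptional point $x_1=o$ is handled by the same formula below.

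Define $\rho(x_j,x_k):=\max\{r_j,r_k\}$ for $j\ne k$ and $\rho(x_k,x_k)=0$. This is an ultrametric. Indeed, for distinct indices, $\max\{r_i,r_k\}\le\max\{\max\{r_i,r_j\},\max\{r_j,r_k\}\}$, and the degenerate cases with repeated indices are immediate.

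For the comparison with $d$, take $j<k$. The triangle inequality gives the upper bound $d(x_j,x_k)\le r_j+r_k\le 2r_k=2\rho(x_j,x_k)$. For the lower bound, $d(x_j,x_k)\ge r_k-r_j\ge r_k-\lambda^{-1}r_k=(1-\lambda^{-1})\rho(x_j,x_k)$. Hence $\id$ is $L$-bi-Lipschitz with $L=\max\{2,\lambda/(\lambda-1)\}$, which finishes the argument.

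There is no real obstacle here. The only point needing care is that the lower bound uses the growth ratio $r_k\ge\lambda r_j$ for every $j<k$, not just for consecutive indices; this follows by iterating the hypothesis. The discreteness assumption is not actually needed. As an alternative route, one could cover $Z$ at scale $s$ as in Lemma~\ref{lemma:exp_sequence}: put all $x_k$ with $r_k\lesssim s$ into one set and use singletons for the rest. The ultrametric route is cleaner.
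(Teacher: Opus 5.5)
Your proof is correct and is essentially the paper's argument. The paper uses the same ultrametric $d_U(x_m,x_n)=r_{\max\{m,n\}}$, which equals your $\max\{r_j,r_k\}$, and the same two triangle-inequality bounds, with upper constant $1+\lambda^{-1}$ where you have $2$. Your parenthetical on the degenerate case has one harmless slip: $r_k=0$ forces the \emph{earlier} radii $r_1,\dots,r_{k-1}$ to vanish, not the later ones, and so only $x_1$ can equal $o$, provided the points $x_k$ are taken to be distinct rather than deducing distinctness from strict monotonicity.
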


\begin{proof}
Set $r_k:=d(x_k,o)$, so $r_{k+1}\ge \lambda \,r_k$ and hence $(r_k)$ is strictly increasing. Define $d_U:Z\times Z\to[0,\infty)$ by
\[
d_U(x_m,x_n)\;:=\;\begin{cases}
0,& m=n,\\[2pt]
r_{\max\{m,n\}},& m\neq n.
\end{cases}
\]
We claim that $d_U$ is an ultrametric on $Z$. 
It suffices to verify the triangle inequality, so fix $m,n,p$ and assume without loss of generality that $m\le n$.
If $p\ge n$ then $d_U(x_m,x_n)=r_n\le r_p=\max\{d_U(x_m,x_p),d_U(x_p,x_n)\}$. 
If $m\le p<n$ then $d_U(x_m,x_n)=r_n=\max\{r_p,r_n\}=\max\{d_U(x_m,x_p),d_U(x_p,x_n)\}$. 
Finally, if $p\le m\le n$ then $d_U(x_m,x_p)=r_m\le r_n=d_U(x_m,x_n)$ and $d_U(x_p,x_n)=r_n$, 
so again $d_U(x_m,x_n)\le\max\{d_U(x_m,x_p),d_U(x_p,x_n)\}$. Thus $d_U$ is an ultrametric.

Next we show that $(Z,d)$ and $(Z,d_U)$ are bi-Lipschitz equivalent.  Assume again that $m<n$. Then $r_n-r_m \le d(x_m,x_n) \le r_n+r_m$ by the triangle inequality in $(X,d)$, and since $r_m\le r_n/a$ we conclude that
\[
\bigl(1-\frac{1}{\lambda}\bigr)r_n \, \le \, d(x_m,x_n) \, \le \, \bigl(1+\frac{1}{\lambda}\bigr)r_n.
\]
Since $d_U(x_m,x_n)=r_n$ we obtain
\[
    \frac{\lambda}{\lambda+1}\,d(x_m,x_n) \, \le \, d_U(x_m,x_n)
    \, \le \,
    \frac{\lambda}{\lambda-1}\,d(x_m,x_n).
\]
Therefore the identity map $\mathrm{id}:(Z,d)\to (Z,d_U)$ is bi-Lipschitz. Since $Z$ is bi-Lipschitz equivalent to an ultrametric space, it follows that $\ndim Z =0$.
\end{proof}
    
\subsection{Quasiconformal and quasiregular maps}

\begin{definition}\label{def:QC}
A homeomorphism $f : (X, d_X) \to (Y, d_Y)$ between metric spaces is \emph{$H$-quasiconformal} for some $H \ge 1$ if  
\begin{equation}\label{eqn:metrically_qc_H_f}
   \limsup_{r \to 0} \frac{\sup\{d_Y(f(x), f(y)) : d_X(x, y) \le r\}}{\inf\{d_Y(f(x), f(z)) : d_X(x, z) \ge r\}} \le H
\end{equation}
for every $x \in X$.
\end{definition}
    
\begin{remark}\label{rem:qcqs}
If $f : \Omega \to \Omega'$ is a homeomorphism between domains in $\R^n$, $n \ge 2$, then $f$ is quasiconformal if and only if it is locally quasisymmetric. Moreover, if $\Omega = \Omega' = \R^n$ then $f$ is quasiconformal if and only if it is quasisymmetric. Orientation-preserving and $1$-quasiconformal homeomorphisms of planar domains are precisely the conformal mappings.
\end{remark}

In general, quasiconformal maps need not preserve the Nagata dimension of arbitrary (non-compact) subsets. For example, the conformal map $f(z) = e^z$ defined on $\Omega = \{\,x + \bi y : x \in \R,\, 0 < y < 2\pi \}$ maps the discrete set $\N \subset \Omega$ of Nagata dimension one to the set $\{e^n : n \in \N\}$ of Nagata dimension zero. However, Lemma \ref{rem:qcqs} and Theorem \ref{thm:QS_invariance_ndim} immediately imply the following result.

\begin{proposition}\label{lemma:QC_invariance_ndim_compact}
Let $f:\Omega \to \Omega'$ be a quasiconformal map between domains in $\R^n$, $n \ge 2$. Then $\dim_N E = \dim_N f(E)$ for every compact set $E \subset \Omega$.
\end{proposition}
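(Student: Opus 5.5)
The plan is to reduce the statement to Theorem \ref{thm:QS_invariance_ndim} by covering $E$ with finitely many balls on which $f$ is quasisymmetric, and then to glue the pieces together using the finite stability of Theorem \ref{thm:finite_stability_ndim}.

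First, Remark \ref{rem:qcqs} says that $f$ is locally quasisymmetric. So for each $x \in E$ there is a radius $r_x>0$ with $\overline{B}(x,r_x)\subset\Omega$ such that the restriction $f|_{\overline{B}(x,r_x)}$ is $\eta_x$-quasisymmetric onto its image. If the local statement only gives quasisymmetry on some ball $B(x,2r_x)$, we simply shrink the radius. The open balls $B(x,r_x)$ cover the compact set $E$, so we may extract a finite subcover $B_1,\dots,B_k$.

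Next, set $E_i := E\cap \overline{B_i}$, so that $E=E_1\cup\dots\cup E_k$. The restriction of a quasisymmetric map to any subset is again quasisymmetric with the same $\eta$, as is immediate from Definition \ref{def:QS}. Hence $f|_{E_i}:E_i\to f(E_i)$ is quasisymmetric, and Theorem \ref{thm:QS_invariance_ndim} gives $\ndim E_i=\ndim f(E_i)$ for each $i$.

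Finally, $f(E)=\bigcup_i f(E_i)$. Applying Theorem \ref{thm:finite_stability_ndim} repeatedly (induction on $k$) to both unions gives
\[
\ndim E=\max_i \ndim E_i=\max_i \ndim f(E_i)=\ndim f(E).
\]

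The only step that needs genuine care is the first one: one must make sure that local quasisymmetry yields a single homeomorphism $\eta$ on each chosen ball, not merely a pointwise condition at the center. This is exactly the content of the standard equivalence cited in Remark \ref{rem:qcqs}, namely that a quasiconformal map is $\eta$-quasisymmetric on balls $B$ with $2B\Subset\Omega$, where $\eta$ depends only on $n$ and the distortion constant. Beyond that, compactness does all the work: it reduces the problem to finitely many pieces, which is precisely why the statement fails for non-compact sets such as $\N$ under $e^z$.
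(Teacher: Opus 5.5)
Your proposal is correct and follows the paper's proof: local quasisymmetry of $f$ (Remark~\ref{rem:qcqs}), then invariance of Nagata dimension under quasisymmetric maps (Theorem~\ref{thm:QS_invariance_ndim}) on each of finitely many pieces, then finite stability (Theorem~\ref{thm:finite_stability_ndim}). The only difference is that you write out the finite-subcover step from compactness, which the paper's one-line proof leaves implicit.
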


\begin{proof}
Since quasiconformal maps in $\R^n$ are locally quasisymmetric, the result follows from the finite stability of Nagata dimension (Theorem~\ref{thm:finite_stability_ndim}) and its invariance under quasisymmetric maps (Theorem~\ref{thm:QS_invariance_ndim}).
\end{proof}

\begin{definition}\label{def:analytic_QR}
Let $\Omega \subset \R^n$ be a domain and let $f : \Omega \to \R^n$ be a continuous mapping in the Sobolev space $W^{1,n}_{\mathrm{loc}}(\Omega, \R^n)$. We say that $f$ is \emph{$K$--quasiregular}, for some $K \ge 1$, if it satisfies the distortion inequality
\begin{equation}\label{eqn:analytic_QR}
            |Df(x)|^n \le K\, J_f(x)
            \qquad \text{for almost every } x \in \Omega,
        \end{equation}
where $|Df(x)| = \sup_{|v|=1} |Df(x)v|$ denotes the operator norm of the differential and $J_f(x)$ is the Jacobian determinant of $f$.
\end{definition}

Quasiregular mappings admit a metric characterization akin to Definition \ref{def:QC}; see \cite[Theorem 6.2]{RickmanBookQRMaps}. On the other hand, quasiconformal maps of Euclidean domains are homeomorphisms which are also quasiregular in the sense of Definition \ref{def:analytic_QR}. In the planar case, quasiregular maps have a particularly simple structure: each such map can be expressed as the composition of a quasiconformal homeomorphism and a holomorphic function. In fact every analytic function is $1$-quasiregular.

\begin{theorem}[Sto{\"\i}low factorization, \cite{LehtoVirtanen1973}]\label{thm:Stoilow}
Let $\Omega \subset \C$ be a domain and let $f : \Omega \to \C$ be quasiregular. Then there exists a quasiconformal homeomorphism $h : \Omega \to \Omega'$ onto a domain $\Omega' \subset \C$ and an analytic function $g : \Omega' \to \C$ so that $f = g \circ h$.
\end{theorem}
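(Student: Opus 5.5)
The statement to prove is the Stoïlow factorization (Theorem \ref{thm:Stoilow}): a quasiregular $f:\Omega\to\C$ factors as $f=g\circ h$ with $h$ quasiconformal onto a domain $\Omega'$ and $g$ analytic on $\Omega'$. I would argue via the measurable Riemann mapping theorem. Since $f\in W^{1,2}_{\mathrm{loc}}$ and satisfies \eqref{eqn:analytic_QR} with $n=2$, at almost every point where $J_f>0$ we can write $f_{\bar z}=\mu f_z$. I would define the Beltrami coefficient $\mu:=f_{\bar z}/f_z$ on $\{f_z\neq 0\}$ and $\mu:=0$ elsewhere. The distortion inequality $|Df|^2\le K J_f$, rewritten as $(|f_z|+|f_{\bar z}|)^2\le K(|f_z|^2-|f_{\bar z}|^2)$, gives $|\mu|\le k:=(K-1)/(K+1)<1$ almost everywhere, and it also gives $f_{\bar z}=0$ wherever $f_z=0$. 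Hence $f_{\bar z}=\mu f_z$ holds almost everywhere on $\Omega$.

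Next I extend $\mu$ by zero to all of $\C$ and apply the measurable Riemann mapping theorem (Ahlfors--Bers). This produces a quasiconformal homeomorphism $\Phi:\C\to\C$ with $\Phi_{\bar z}=\mu\Phi_z$ almost everywhere. I set $h:=\Phi|_\Omega$ and $\Omega':=\Phi(\Omega)$, which is a domain, and define $g:=f\circ h^{-1}:\Omega'\to\C$. What remains is to show that $g$ is analytic.

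The map $g$ is continuous and lies in $W^{1,2}_{\mathrm{loc}}$: inverses of quasiconformal maps are quasiconformal, and composing a $W^{1,2}_{\mathrm{loc}}$ map with a quasiconformal map preserves this class, because quasiconformal maps satisfy Lusin's condition (N), are differentiable almost everywhere with $J>0$ almost everywhere, and the change of variables formula applies. By the chain rule for Wirtinger derivatives at almost every $w=h(z)$,
\[
f_{\bar z}=g_w\,h_{\bar z}+g_{\bar w}\,\overline{h_z}, \qquad f_z=g_w\,h_z+g_{\bar w}\,\overline{h_{\bar z}}.
\]
Substituting $h_{\bar z}=\mu h_z$ and $f_{\bar z}=\mu f_z$ gives $g_{\bar w}\bigl(\overline{h_z}-\mu\overline{h_{\bar z}}\bigr)=0$. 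The bracket has modulus at least $(1-|\mu|^2)|h_z|>0$ almost everywhere. Therefore $g_{\bar w}=0$ almost everywhere in $\Omega'$ (using (N$^{-1}$) for $h$ to transfer null sets), so $g$ is a $W^{1,2}_{\mathrm{loc}}$ weak solution of the Cauchy--Riemann equation. By Weyl's lemma, $g$ is analytic.

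The main obstacle is the justification in the last paragraph. One has to show that the composition $f\circ h^{-1}$ is Sobolev and that the pointwise chain rule holds almost everywhere, with null sets handled via the Lusin properties of quasiconformal maps. I would cite standard facts from Lehto--Virtanen \cite{LehtoVirtanen1973} for this rather than reprove them. The existence theorem for the Beltrami equation is the deep input, and I would take it as known.
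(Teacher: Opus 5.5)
Your argument is correct. It is the standard proof of Sto{\"\i}low factorization: solve the Beltrami equation with $\mu=f_{\bar z}/f_z$, then show that $f\circ h^{-1}$ is a $W^{1,2}_{\mathrm{loc}}$ solution of $\partial_{\bar w}g=0$ and apply Weyl's lemma. The only caveat is bookkeeping: transferring null sets between $\Omega$ and $\Omega'$ needs both condition (N) and condition (N$^{-1}$) for $h$, and quasiconformal maps satisfy both.

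The paper gives no proof of its own. It simply cites Lehto--Virtanen, where the factorization is obtained in essentially this way via the existence theorem for the Beltrami equation.
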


\subsection{Spherical metric on the Riemann sphere and quasi-M\"obius mappings}

It is convenient to work on the extended complex plane, $\hatc := \C \cup \{\infty\}$ equipped with the spherical metric
\begin{equation*}\label{eqn:spherical_metric}
    q(z,w) := \frac{|z-w|}{\sqrt{(1+|z|^2)(1+|w|^2)}}
\end{equation*}
and
\[
    q(z,\infty) = q(\infty,z) = \frac{1}{\sqrt{1+|z|^2}}, \qquad q(\infty,\infty) = 0.
\]
We introduce the {\it inversion} $\iota: \C \setminus\{0\} \to \C \setminus \{0\}$ given by $\iota(z) = \tfrac1z$, and we extend $\iota$ to a map $\iota:\hatc \to \hatc$ by $\iota(\infty) = 0$ and $\iota(0) = \infty$. The metric $q$ is invariant under inversion, $q(\iota(z),\iota(w)) = q(z,w)$, and induces the standard conformal structure on the Riemann sphere $\hatc$.

The spherical metric is bi-Lipschitz equivalent to the Euclidean metric on bounded subsets of $\C$: if $E \subset \C$ is bounded, then $c_E^{-1}|z-w| \le q(z,w) \le |z-w|$ for all $z,w \in E$, where $c_E = 1 + \sup_{z \in E} |z|^2$.

Recall that the M\"obius transformations of $\hatc$ preserve the cross-ratio of quadruples of elements of $\hatc$. In \cite{VaisalaQM1984}, V\"ais\"al\"a introduced the notion of quasi-M\"obius map in the metric space setting.

\begin{definition}\label{def:QM}
Let $\eta : [0,\infty) \to [0,\infty)$ be a homeomorphism. A homeomorphism $f : (X, d_X) \to (Y,d_Y)$ is \emph{$\eta$-quasi-M\"obius} if for all quadruples of distinct points $x,a,b,c \in X$, we have
\[
  \frac{d_Y(f(x),f(a))\,d_Y(f(b),f(c))}{d_Y(f(x),f(b))\,d_Y(f(a),f(c))} 
  \le \eta\!\left(
  \frac{d_X(x,a)\,d_X(b,c)}{d_X(x,b)\,d_X(a,c)}
  \right).
\]
\end{definition}

Every quasisymmetric map is quasi-M\"obius, and conversely, every quasi-M\"obius map between bounded metric spaces (or which preserves the point at infinity in the one point extensions, cf.\ \cite[Theorem 3.10]{VaisalaQM1984}) is quasisymmetric.

\begin{theorem}[Theorem 1.1 in \cite{Xie2008}]\label{thm:QM_invariance_ndim}
Let $f:(X,d_X) \to (Y,d_Y)$ be quasi-M\"obius. Then $\dim_N X = \dim_N Y$.
\end{theorem}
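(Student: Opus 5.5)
The statement is Xie's theorem, cited from \cite{Xie2008}. The plan is to reduce it to the quasisymmetric case, Theorem~\ref{thm:QS_invariance_ndim}, by metric inversion. First I dispose of degenerate cases. If $X$ has at most three points, both sides are zero. By finite stability (Theorem~\ref{thm:finite_stability_ndim}), removing a single point $p$ from $X$, and $f(p)$ from $Y$, changes neither Nagata dimension. If $X$ and $Y$ are both bounded, the remark following Definition~\ref{def:QM} says $f$ is quasisymmetric, and Theorem~\ref{thm:QS_invariance_ndim} finishes. In general, $f$ may exchange bounded and unbounded regions, so I will normalize with an inversion.

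Fix $p\in X$ and write $p'=f(p)$. On $X\setminus\{p\}$ consider the inverted quasimetric
\[
i_p(x,y)=\frac{d_X(x,y)}{d_X(x,p)\,d_X(y,p)}.
\]
By the Bonk--Kleiner inversion construction, $i_p$ is bi-Lipschitz equivalent to a genuine metric $\hat d_p$. The identity $(X\setminus\{p\},d_X)\to(X\setminus\{p\},\hat d_p)$ distorts cross-ratios by a bounded factor, so it is quasi-M\"obius. The same holds on $Y\setminus\{p'\}$ with $\hat d_{p'}$.

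Conjugating by these identities, $f$ becomes a quasi-M\"obius map $(X\setminus\{p\},\hat d_p)\to(Y\setminus\{p'\},\hat d_{p'})$. This map sends the ``point at infinity'' of the inverted space, namely the image of $p$, to the corresponding point of the target. By \cite[Theorem 3.10]{VaisalaQM1984} it is therefore quasisymmetric, and Theorem~\ref{thm:QS_invariance_ndim} gives $\ndim(X\setminus\{p\},\hat d_p)=\ndim(Y\setminus\{p'\},\hat d_{p'})$. Bi-Lipschitz changes of metric do not affect $\ndim$, so $\hat d_p$ and $i_p$ may be used interchangeably.

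It remains to prove the \emph{inversion lemma}: $\ndim(X\setminus\{p\},d_X)=\ndim(X\setminus\{p\},\hat d_p)$. By symmetry, since inverting twice returns a metric bi-Lipschitz to the original, one inequality suffices. I decompose into dyadic annuli
\[
A_k=\{x: 2^k\le d_X(x,p)<2^{k+1}\},\qquad k\in\Z.
\]
On $A_k$, and on unions of boundedly many consecutive annuli, $\hat d_p$ is comparable to $4^{-k}d_X$ with uniform constants. Hence each such block has the same Nagata data as $(A_k,d_X)$ after rescaling, uniformly in $k$.

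Given a scale $s$ for $\hat d_p$, I cover each block $A_k$ using the $d_X$-cover at scale $4^k s$. The blocks meeting the ball $\hat B(p,Cs)$ all lie in a single ball around $p$, which I take as one set of the cover. The main obstacle is the gluing. Pieces from different annuli could meet a common set of $\hat d_p$-diameter $\le s$, which would inflate the multiplicity.

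I would handle this in the standard Lang--Schlichenmaier way. Split the annuli into three residue classes mod $3$. Within one class, distinct annuli are separated in $\hat d_p$ by an amount comparable to the scale where they still need separate pieces. Annuli so close to $p$ in the inverted metric that the separation fails are absorbed into the single ``ball at infinity'' piece. Each class then yields a cover of multiplicity $\le n+1$. I then merge the three classes using the union theorem for Nagata dimension, in the form of the finite stability argument of \cite{LangSchlichenmaier2005} (Theorem~\ref{thm:finite_stability_ndim}), which controls the dimension of a finite union with constants depending only on the pieces. The bookkeeping of which annuli are ``near infinity'' at scale $s$ is where I expect most of the care to be needed.
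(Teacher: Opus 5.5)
The paper gives no proof of this statement; it is quoted from \cite{Xie2008}, so there is no internal argument to compare with. Your reconstruction is the standard route and is sound in outline:
\begin{itemize}
\item conjugate $f$ by the inversions at $p$ and $f(p)$;
\item upgrade the conjugated quasi-M\"obius map to a quasisymmetric one, since it fixes the point at infinity;
\item apply Theorem~\ref{thm:QS_invariance_ndim};
\item prove separately that inversion preserves $\ndim$.
\end{itemize}
However, the gluing step is written with the inversion oriented backwards, and as stated it fails.

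For the Bonk--Kleiner chain metric, $x\mapsto d(x,p)^{-1}$ is $1$-Lipschitz. This holds because $|d(u,p)^{-1}-d(v,p)^{-1}|\le i_p(u,v)$ for each link of a chain. So $\hat d_p(x,y)\ge |d(x,p)^{-1}-d(y,p)^{-1}|$, and when $p$ is not isolated every punctured neighborhood of $p$ has infinite $\hat d_p$-diameter. Consequently there is no ball $\hat B(p,Cs)$, and a piece consisting of ``a single ball around $p$'' would be unbounded.

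When you build $\hat d_p$-covers from $d$-covers, the annuli to lump at scale $s$ are those \emph{far} from $p$. The set $\{x:d(x,p)\ge C/s\}$ has $\hat d_p$-diameter at most $2s/C$, since $i_p(x,y)\le d(x,p)^{-1}+d(y,p)^{-1}$. Likewise, for $x\in A_k$ and $y\in A_{k'}$ with $k'\ge k+3$, the Lipschitz bound gives $\hat d_p(x,y)\ge 3\cdot 2^{-k-3}$. So the residue-class separation is governed by the annulus nearer to $p$, and it fails for large $k$, not near $p$.

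With this reorientation, use $d$-scale $4^{k+2}s$ on $A_k$ (since $d\le 4^{k+2}\hat d_p$ there). Each residue class then gets a cover by sets of $\hat d_p$-diameter $\lesssim s$ with $s$-multiplicity at most $n+1$, and Theorem~\ref{thm:finite_stability_ndim} finishes. Your ``small ball around $p$'' is the correct lumped piece for the \emph{opposite} direction, building $d$-covers from $\hat d_p$-covers; you have mixed the two directions.

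Two smaller points also need adding.

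First, \cite[Theorem 3.10]{VaisalaQM1984} requires the inverted spaces to be unbounded, i.e.\ $p$ non-isolated. If $p$ is isolated, then $i_p\le 2/\dist(p,X\setminus\{p\})$. Both inverted spaces are then bounded, and your bounded case applies instead.

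Second, ``inverting twice'' needs a base point for the second inversion. Adjoin a point $q$ with $\hat d(x,q):=1/d(x,p)$. This is a genuine metric extension of the chain metric, by the $1$-Lipschitz property above together with $\hat d_p(x,y)\le d(x,p)^{-1}+d(y,p)^{-1}$. Inverting at $q$ gives $\hat d_p(x,y)\,d(x,p)\,d(y,p)\in[\tfrac14 d(x,y),d(x,y)]$. Since adjoining one point does not change $\ndim$, your one-sided inequality then yields the reverse one.
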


Note that the cross-ratio of a quadruple of elements of $\C$ is the same whether we compute using the Euclidean metric $d_E$ or the spherical metric $q$. Thus $\id:(\C,d_E) \to (\C,q)$ is M\"obius and preserves Nagata dimensions of sets.

In \cite{VaisalaQM1984}, V\"ais\"al\"a further explores the relationship between quasiconformality and quasisymmetry (or the quasi-M\"obius condition) in the setting of domains. We state a special case of \cite[Theorem 5.4]{VaisalaQM1984}.

\begin{theorem}[V\"ais\"al\"a, Theorem 5.4 in \cite{VaisalaQM1984}]\label{th:qcqm}
Let $\Omega$ and $\Omega'$ be uniform domains in $\hatc$. Then every quasiconformal homeomorphism $f: \Omega \to \Omega'$ is quasi-M\"obius. 
\end{theorem}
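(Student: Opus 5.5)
The plan is to reduce the quasi-M\"obius inequality to a comparison of conformal moduli of curve families, since modulus is quasi-invariant under quasiconformal maps. Fix a quadruple of distinct points $x,a,b,c \in \Omega$, and recall the two-sided relation, valid in the Loewner space $(\hatc,q)$, between cross-ratios and moduli. If $E$ is a continuum containing $x,a$ and $F$ is a continuum containing $b,c$, both well chosen, then the modulus $M(E,F;\hatc)$ of the family of curves joining $E$ to $F$ is bounded above and below by functions of the relative distance $\dist(E,F)/\min\{\diam E,\diam F\}$. In turn, that relative distance is controlled above and below by the cross-ratio
\[
\frac{q(x,b)\,q(a,c)}{q(x,a)\,q(b,c)} .
\]
The upper and lower bounds are the Teichm\"uller and Loewner-type estimates. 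So it suffices to produce such continua inside $\Omega$ and to compare moduli computed in $\Omega$ with moduli computed in $\hatc$.

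\textbf{Step 1: choosing the continua.} I will use uniformity of $\Omega$ to pick the continua. Join $x$ to $a$, and $b$ to $c$, by cigar (uniform) curves $\gamma_{xa}$ and $\gamma_{bc}$ in $\Omega$. Their lengths are comparable to $q(x,a)$ and $q(b,c)$, and they stay a controlled distance from $\partial\Omega$. Then $\diam \gamma_{xa} \simeq q(x,a)$ and $\diam\gamma_{bc}\simeq q(b,c)$. When the cross-ratio is small, the two cigars are well separated, and they are not too close when it is large. I will treat the two regimes separately, as is standard: small relative distance and large relative distance.

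\textbf{Step 2: comparing moduli in $\Omega$ and in $\hatc$.} Here I invoke the Gehring--Martio theorem that uniform domains are quasiextremal distance (QED) domains. This gives
\[
M(E,F;\Omega)\ge c\,M(E,F;\hatc)
\]
for continua $E,F\subset\Omega$, and the reverse inequality $M(E,F;\Omega)\le M(E,F;\hatc)$ is trivial. The same holds in $\Omega'$. Since $f$ is $K$-quasiconformal,
\[
K^{-1}M(E,F;\Omega)\le M(fE,fF;\Omega')\le K\,M(E,F;\Omega).
\]
Chaining these inequalities, $M(fE,fF;\hatc)$ is comparable to $M(E,F;\hatc)$. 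Applying the modulus--relative-distance estimates in both directions, the relative distance of $fE,fF$ is bounded by a function of the relative distance of $E,F$.

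\textbf{Step 3: from continua back to points.} Finally I need to recover the cross-ratio of $f(x),f(a),f(b),f(c)$ from the images $f\gamma_{xa}$ and $f\gamma_{bc}$. The subtle point is that $\diam f\gamma_{xa}$ could a priori be much larger than $q(f(x),f(a))$.

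To rule this out, I use a three-point version of the same argument. Applied to $x$, $a$, and a point $z$ of $\gamma_{xa}$ maximizing distance from $x$, it shows that $f$ is weakly quasisymmetric along cigar curves, with control depending only on $K$ and the uniformity constants. This gives $\diam f\gamma_{xa}\lesssim q(f(x),f(a))$.

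This last step is the main obstacle I expect: arranging the choice of continua and the normalization so that every constant depends only on $K$ and the uniformity data. In particular, this must hold uniformly near $\partial\Omega$ and near $\infty$. To deal with the point at infinity, I will first try a M\"obius normalization sending one of the four points to $\infty$, since $q$-cross-ratios are M\"obius-invariant. With that normalization, the three-point quasisymmetry estimate gives the quasi-M\"obius bound directly.
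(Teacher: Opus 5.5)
The paper quotes this result from V\"ais\"al\"a without proof, so your argument has to stand on its own. As written it has a genuine gap.

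\textbf{The interlaced regime is not handled.} Write $t=[x,a,b,c]=\frac{q(x,a)q(b,c)}{q(x,b)q(a,c)}$ for the quantity in Definition~\ref{def:QM} (the reciprocal of yours), and $t'$ for the same expression at $f(x),f(a),f(b),f(c)$. Your chain of modulus inequalities bounds $t'$ from above only if $M(E,F;\Omega)$ is bounded above in terms of $t$. When the pairs $\{x,a\}$, $\{b,c\}$ are interlaced, no choice of continua in $\Omega$ achieves this.

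Here is an example. In $\Omega=\D$, place $x,b,a,c$ at the vertices of a square inscribed in $\{|z|=1-\delta\}$, so $t=2$ for every $\delta$. Take any continuum $E\subset\D$ through $x,a$, and extend it by the radial segments of length $\delta$ from $x$ to $1$ and from $a$ to $-1$. Then either $E$ passes within $\delta$ of $b$ or $c$, or the extended continuum separates $b$ from $c$ in $\D$. In both cases every continuum $F\ni b,c$ in $\D$ has $\dist(E,F)\le\delta$, while $\diam E,\diam F\ge 2(1-\delta)$. By the Loewner property and QED, $M(E,F;\D)\to\infty$ as $\delta\to 0$.

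So ``treating the two regimes separately'' is not routine. The standard way out has three parts:
\begin{itemize}
\item[(i)] Prove the small-to-small estimate for $f$: $t\le t_0\Rightarrow t'\le\eta_0(t)$ with $\eta_0(t)\to 0$.
\item[(ii)] Prove the same estimate for $f^{-1}$. This is where you need $\Omega'$ uniform, not merely QED, and $\Omega$ QED.
\item[(iii)] Use $[x,b,a,c]=1/[x,a,b,c]$. If $t'\ge 1/\delta$, then $[f(x),f(b),f(a),f(c)]\le\delta$, so the estimate for $f^{-1}$ gives $1/t=[x,b,a,c]\le\varepsilon(\delta)$. Choosing $\delta=\delta(t)$ with $\varepsilon(\delta)<1/t$ yields $t'<1/\delta(t)$ for every $t$.
\end{itemize}

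\textbf{Two corrections in the separated regime.} First, the cigars $\gamma_{xa}$, $\gamma_{bc}$ need not be separated even when $t$ is small. Uniformity controls distance to $\partial\Omega$, not to $x$, and $\gamma_{bc}$ may pass through $x$. Use $[x,a,b,c]=[b,c,x,a]$ to arrange $q(x,a)\le q(b,c)$. Then small $t$ forces $q(x,a)\ll r:=\dist(x,\{b,c\})$, and you must take $F$ avoiding a ball about $x$ of radius comparable to $r$. This is possible because uniform domains are linearly locally connected. If $\gamma_{bc}$ comes within $r/(8\kappa)$ of $x$, where $\kappa$ is the uniformity constant, the cigar condition puts $\overline{B}(x,r/(4\kappa))$ inside $\Omega$, and you can detour along its boundary circle.

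Second, your Step 3 obstacle is self-inflicted. On the image side you never need the relative distance of $fE$ and $fF$. Teichm\"uller's estimate gives $M(E',F';\hatc)\ge\Phi(t')$ for \emph{arbitrary} continua $E'\ni f(x),f(a)$ and $F'\ni f(b),f(c)$, with $\Phi$ increasing, positive, and $\Phi(s)\to0$ only as $s\to0$. Hence
\[
\Phi(t')\le M(fE,fF;\hatc)\le C\,M(fE,fF;\Omega')\le CK\,M(E,F;\Omega)\le CK\,\psi(t)\to 0,
\]
where $\psi(t)$ is the annulus bound for $E\subset B(x,C'q(x,a))$ and $F$ avoiding $B(x,r/(8\kappa))$. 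No control on $\diam f\gamma_{xa}$ is needed.

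Your proposed repair would not work anyway, for three reasons.
\begin{itemize}
\item[(a)] Any three points of $\hatc$ are M\"obius equivalent, so a modulus argument says nothing about three points alone; a fourth point or boundary normalization must enter.
\item[(b)] Three-point bounds depending only on $K$ and the uniformity data are false in general: the automorphisms of $\D$ are not uniformly weakly quasisymmetric.
\item[(c)] Sending one of the four points to $\infty$ merely rewrites the quasi-M\"obius inequality as quasisymmetry of the normalized map, which is the theorem itself.
\end{itemize}
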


We omit the definition of uniform domain here, but refer to \cite[\S4.6]{VaisalaQM1984} for details. In the case when $\Omega \subset \C$ is a Jordan domain in $\hatc$, the domain $\Omega$ is uniform if and only if the boundary of $\Omega$ in $\hatc$ is a quasicircle \cite[Corollary 2.33]{ms:injectivity}.

\section{Invariance of Nagata dimension}\label{sec:analytic_and_rational_maps_ndim}

This section contains results establishing the invariance of Nagata dimension in various settings. Subsection \ref{subsec:analytic_maps_ndim} focuses on the invariance of Nagata dimension under analytic functions and particularly under polynomials, while 
subsection \ref{sec:rational_maps_ndim} tells a parallel story for meromorphic functions and rational functions.

\subsection{Nagata dimension and analytic functions}\label{subsec:analytic_maps_ndim}

In this section, we prove Theorem \ref{mainthm1:polynomial_preserve_ndim} on the invariance of Nagata dimension by non-constant complex polynomials. Towards this end, we first prove that analytic functions preserve Nagata dimension of compact subsets in their domain.
    
\begin{lemma}\label{lemma:zn_weakly_qs}
Let $g(z)=z^n$ for $n\in\N$. Fix $\theta\in(0,2\pi/n)$ and $\alpha>0$, and set
$$
            \Omega_{\theta,\alpha} := \{z\in\C :\ \alpha < \arg z < \alpha + \theta\}.
$$
Then $g:\overline{\Omega_{\theta,\alpha}} \to \overline{g(\Omega_{\theta,\alpha})}$ is quasi-M\"obius.
\end{lemma}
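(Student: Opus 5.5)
The plan is to factor $g(z)=z^n$ on the closed sector $\overline{\Omega_{\theta,\alpha}}$ into two maps: a bi-Lipschitz angular stretch and a global radial power map. We then show the composition is quasisymmetric, which suffices because every quasisymmetric map is quasi-M\"obius. Write $z=re^{\bi\varphi}$ with $\varphi\in[\alpha,\alpha+\theta]$ and $r\ge 0$. Then
\[
g = \rho\circ\sigma, \qquad \sigma(re^{\bi\varphi}) := re^{\bi(n\alpha+n(\varphi-\alpha))}, \qquad \rho(w):=|w|^{n-1}w .
\]
Here $\sigma$ maps $\overline{\Omega_{\theta,\alpha}}$ homeomorphically onto the closed sector $S'=\{n\alpha\le \arg w\le n\alpha+n\theta\}\cup\{0\}$, whose opening $n\theta$ is strictly less than $2\pi$. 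Since $\rho$ preserves arguments and maps $S'$ onto itself, $g$ maps $\overline{\Omega_{\theta,\alpha}}$ homeomorphically onto $S'=\overline{g(\Omega_{\theta,\alpha})}$.

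First step: $\sigma$ is bi-Lipschitz. Let $\beta<2\pi$ be a bound on the opening of the sector. I would establish the elementary comparability
\[
|re^{\bi\varphi}-se^{\bi\psi}| \asymp_\beta |r-s| + \min\{r,s\}\,|\varphi-\psi| \qquad\text{whenever } |\varphi-\psi|\le\beta .
\]
To prove it, reduce to the case $r\le s$. Move $re^{\bi\varphi}$ to $re^{\bi\psi}$ along the arc and then radially, which gives the upper bound. For the lower bound, use that $|e^{\bi\varphi}-e^{\bi\psi}| = 2|\sin((\varphi-\psi)/2)|\ge c_\beta|\varphi-\psi|$ for $|\varphi-\psi|\le\beta<2\pi$. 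Then split according to whether $|r-s|$ dominates or not, using the triangle inequality with the point $se^{\bi\varphi}$.

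Apply the comparability in both the source sector (opening $\theta$) and the target sector (opening $n\theta<2\pi$). Since $\sigma$ preserves moduli and multiplies angle differences by $n$, $\sigma$ is $L$-bi-Lipschitz with $L=L(n,\theta)$. I expect this step to be the main technical point: the hypothesis $\theta<2\pi/n$ is used exactly here. Without it, distinct points on the two boundary rays of the target sector could coalesce, and the lower bound would fail.

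Second step: $\rho$ is quasisymmetric. The radial stretch $\rho:\C\to\C$ is a quasiconformal homeomorphism of the whole plane; it is smooth away from $0$, with bounded distortion $K=n$. By Remark \ref{rem:qcqs}, $\rho$ is $\eta$-quasisymmetric on $\C$, and hence also on its restriction to $S'$. Alternatively, quasisymmetry can be checked directly using the homogeneity $\rho(tw)=t^n\rho(w)$ and rotation equivariance, together with a compactness argument on normalized triples.

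Conclusion: a bi-Lipschitz map is $\eta$-quasisymmetric with $\eta(t)=L^2t$, and compositions of quasisymmetric maps are quasisymmetric. Hence $g=\rho\circ\sigma$ is quasisymmetric from $\overline{\Omega_{\theta,\alpha}}$ onto $\overline{g(\Omega_{\theta,\alpha})}$. Since every quasisymmetric map is quasi-M\"obius, $g$ is quasi-M\"obius as claimed.
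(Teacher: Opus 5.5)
Your argument is correct, but it takes a different route from the paper's proof.

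\textbf{The paper's route.} The paper argues abstractly. The sector $\Omega_{\theta,\alpha}$ is a quasidisk, hence a uniform domain. Since $\theta<2\pi/n$, the map $g$ is conformal (in particular injective) onto a sector of opening $n\theta<2\pi$. V\"ais\"al\"a's theorem (Theorem~\ref{th:qcqm}), that quasiconformal maps between uniform domains are quasi-M\"obius, then finishes the proof.

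\textbf{Your route.} You factor $z^n=\rho\circ\sigma$ into two explicit maps:
\begin{itemize}
\item the modulus-preserving angular stretch $\sigma(re^{\bi\varphi})=re^{\bi n\varphi}$, which you show is bi-Lipschitz on the closed sector via the comparability $|re^{\bi\varphi}-se^{\bi\psi}|\asymp |r-s|+\min\{r,s\}|\varphi-\psi|$ for $|\varphi-\psi|\le\beta<2\pi$;
\item the global radial stretch $\rho(w)=|w|^{n-1}w$, which is $n$-quasiconformal on $\C$ and hence quasisymmetric.
\end{itemize}
The comparability estimate and its use in both the source sector (opening $\theta$) and the target sector (opening $n\theta$) check out, with a constant depending only on $n$ and $\theta$.

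\textbf{What each approach buys.} Your route is more elementary. Its only black box is that global quasiconformal maps of the plane are quasisymmetric, which is standard and is recorded in Remark~\ref{rem:qcqs}. It also has three further advantages:
\begin{itemize}
\item It pinpoints exactly where $\theta<2\pi/n$ enters, namely the lower Lipschitz bound for $\sigma$.
\item It treats the closed sector, including the boundary rays and the vertex, directly. The paper applies Theorem~\ref{th:qcqm} on the open sector and tacitly relies on a boundary extension; it also leaves implicit that the image sector is uniform.
\item It gives the stronger conclusion that $g$ is quasisymmetric, with distortion depending only on $n$ and $\theta$.
\end{itemize}
The paper's route is shorter and more flexible. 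It applies verbatim to any quasiconformal map between uniform domains, with no explicit factorization needed. This is the form of argument the authors have in mind for higher-dimensional power mappings on conical sectors in Section~\ref{sec:open_questions}.
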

    
\begin{proof}
The sector $\Omega_{\theta,\alpha}$ is a quasidisk \cite[Example 1.4.2]{gh:quasidisk} and hence its boundary in $\hatc$ is a quasicircle. Thus $\Omega_{\theta,\alpha}$ is a uniform domain. Since $\theta <\tfrac{2\pi}{n}$ the map $g$ is conformal on $\Omega_{\theta,\alpha}$, and hence $1$-quasiconformal. By Theorem \ref{th:qcqm}, $g$ is quasi-M\"obius.
\end{proof}

\begin{corollary}\label{cor:zn_preserves_ndim}
        Let $g(z)=z^n$ for some $n\in\N$. Then $\ndim X = \ndim g(X)$ for every subset $X\subset\C$.
\end{corollary}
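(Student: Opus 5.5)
Split the plane into finitely many closed sectors and use Lemma~\ref{lemma:zn_weakly_qs} on each piece, combined with finite stability (Theorem~\ref{thm:finite_stability_ndim}) and quasi-M\"obius invariance (Theorem~\ref{thm:QM_invariance_ndim}).

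\emph{Choice of sectors.} Fix $\theta \in (0, 2\pi/n)$, say $\theta = \pi/n$. Choose finitely many angles $\alpha_1,\dots,\alpha_k$ (with $k = 2n+1$, say) so that the closed sectors $S_j := \overline{\Omega_{\theta,\alpha_j}}$ cover $\C$. Set $X_j := X \cap S_j$, so that $X = \bigcup_j X_j$.

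\emph{Invariance on each piece.} By Lemma~\ref{lemma:zn_weakly_qs}, $g$ restricted to $S_j$ is quasi-M\"obius onto its image. Its restriction to the subset $X_j$ is therefore quasi-M\"obius onto $g(X_j)$, since the quasi-M\"obius condition passes to subsets. Theorem~\ref{thm:QM_invariance_ndim} then gives $\ndim X_j = \ndim g(X_j)$.

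\emph{Assembling.} Finite stability (Theorem~\ref{thm:finite_stability_ndim}), iterated over the finitely many pieces, gives
\[
\ndim X = \max_j \ndim X_j = \max_j \ndim g(X_j) = \ndim g(X),
\]
where the last equality uses $g(X) = \bigcup_j g(X_j)$. Empty pieces are harmless: just drop them from the union, or use the convention that the empty set has dimension $-1$.

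\emph{Main point to check.} The only delicate issue is whether $g$ is injective on the closed sector, so that it is genuinely a homeomorphism there. This holds because $\theta < 2\pi/n$. For the origin and the boundary rays, the quasi-M\"obius property for quadruples involving them follows by continuity from the open sector (or from $\infty$, which lies in the closure in $\hatc$). This step is already packaged in Lemma~\ref{lemma:zn_weakly_qs}, so the corollary follows without further obstacles.
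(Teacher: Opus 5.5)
Your proof is correct and is essentially the paper's argument: cover $\C$ by finitely many sectors of opening $\pi/n$, apply Lemma~\ref{lemma:zn_weakly_qs} together with Theorem~\ref{thm:QM_invariance_ndim} on each piece, and conclude by finite stability. Your use of closed sectors is in fact slightly more careful than the paper's, since its open sectors $P_j$ do not cover the origin or the boundary rays.
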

    
\begin{proof}
 We appeal to the finite stability theorem~\ref{thm:finite_stability_ndim} for the Nagata dimension. Partition the plane into angular sectors
 $$
  P_j := \{ z\in\C : \tfrac{j \pi}{n} < \arg z < \tfrac{(j+1)\pi}{n} \}, \qquad j=0,1,\ldots,2n-1.
 $$
Note that $P_j$ is equal to the domain $\Omega_{\theta,\alpha}$ in Lemma \ref{lemma:zn_weakly_qs} for a suitable $\alpha$ and for $\theta = \tfrac\pi{n}$. Since $\theta < \tfrac{2\pi}{n}$ the hypotheses of the lemma are satisfied and hence $g|_{P_j}$ is quasi-M\"obius and preserves the Nagata dimension of arbitrary subsets of $P_j$. Since $\C = \bigcup_{j=0}^{2n-1} P_j$ is a finite union, the finite stability of Nagata dimension implies that
        $$
        \ndim X
        = \max_{0\le j\le n-1} \ndim (X\cap P_j)
        = \max_{0\le j\le n-1} \ndim g(X\cap P_j)
        = \ndim g(X).
        $$
for any $X\subset\C$. The proof is complete.
\end{proof}

    \begin{theorem}\label{thm:analytic_preserve_ndim_compact}
        Let $f: \Omega\to \C$ be an analytic map where $\Omega$ is a domain. If $E\subset\Omega$ is compact, then $\ndim E = \ndim f(E)$.
    \end{theorem}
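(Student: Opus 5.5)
The argument is local and rests on finite stability. If $f$ is constant, then $f(E)$ is a single point. In that case we need $\ndim E=0$, which fails for general compact $E$, so the statement implicitly assumes $f$ non-constant and we assume this throughout. The plan is to cover the compact set $E$ by finitely many small closed discs. On each disc $f$ will be written as a composition of maps that are each quasi-M\"obius or quasisymmetric on suitable pieces. Invariance then holds on each piece, and Theorem \ref{thm:finite_stability_ndim} assembles the pieces.

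The local normal form comes first. Fix $z_0\in\Omega$ and let $n\ge 1$ be the local degree of $f$ at $z_0$. There is a radius $r>0$ and a conformal map $\varphi$, defined on a neighbourhood of $\overline{B}(z_0,2r)$ with $\varphi(z_0)=0$, such that
\[
f(z)=f(z_0)+\varphi(z)^n \qquad \text{for } z\in \overline{B}(z_0,2r).
\]
When $n=1$ this is simply local univalence. Shrinking $r$ if necessary, $\varphi$ is conformal on the larger disc $B(z_0,2r)$. Since $\overline{B}(z_0,r)$ is a compact subset of that disc, Proposition \ref{lemma:QC_invariance_ndim_compact} applies: for every compact $K\subset \overline{B}(z_0,r)$ we have $\ndim \varphi(K)=\ndim K$. (Alternatively, the Koebe distortion theorem shows directly that $\varphi$ is bi-Lipschitz on $\overline{B}(z_0,r)$.) Next, Corollary \ref{cor:zn_preserves_ndim} gives $\ndim g(Y)=\ndim Y$ for $g(w)=w^n$ and every $Y\subset\C$. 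Translation by $f(z_0)$ is an isometry. Combining the three maps, $\ndim f(K)=\ndim K$ for every compact $K\subset\overline{B}(z_0,r)$.

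Now the global assembly. By compactness, choose finitely many points $z_1,\dots,z_m\in E$ with radii $r_1,\dots,r_m$ as above such that $E\subset\bigcup_i B(z_i,r_i)$. Set $E_i=E\cap\overline{B}(z_i,r_i)$; each $E_i$ is compact. Then
\[
\ndim f(E)=\max_i \ndim f(E_i)=\max_i \ndim E_i=\ndim E .
\]
The first and last equalities are Theorem \ref{thm:finite_stability_ndim}, applied to $f(E)=\bigcup_i f(E_i)$ and to $E=\bigcup_i E_i$. The middle equality is the local step.

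I expect the only delicate point to be the local step. There we must make sure that $\varphi$ is applied only to a compact subset well inside its domain of conformality, so that Proposition \ref{lemma:QC_invariance_ndim_compact} applies. The corollary on $w\mapsto w^n$ needs no compactness, so the image $\varphi(K)$ causes no trouble.
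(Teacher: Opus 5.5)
Your proof is correct and follows the same strategy as the paper: a local normal form $w\mapsto w^n$ up to a conformal change of coordinates, Proposition \ref{lemma:QC_invariance_ndim_compact} applied on compacta, Corollary \ref{cor:zn_preserves_ndim}, and finite stability. Your execution is slightly cleaner than the paper's. You use the one-sided form $f=f(z_0)+\varphi^n$ at every point of $E$ together with a finite subcover, which avoids the paper's unqualified claim that $f$ is conformal (hence quasiconformal) on $E\setminus\bigcup_j B_{c_j}$, where $f$ is only locally injective. You also rightly note that $f$ must be non-constant.
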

    
    \begin{proof}
         Since $f$ is analytic, the set of its critical points in $E$ is finite. If $f$ has no critical points in $E$, then $f$ is conformal on $E$, and hence by Lemma~\ref{lemma:QC_invariance_ndim_compact} $\ndim E = \ndim f(E)$ in this case.
    
        Suppose now that $f$ has critical points $c_1,\dots,c_m$ in $E$. Since $f$ is analytic, for each $c_j$ there exists a radius $\varepsilon_j>0$ and conformal maps $\phi_j, \psi_j$ such that
        \begin{equation*}
            \psi_j\circ f\circ\phi_j^{-1}(z) = z^{k_j}
        \end{equation*}
        for all $z \in B_{c_j}$ where $B_{c_j}$ is a ball of radius $\epsilon_j$ around $c_j$ and $k_j$ is the multiplicity of $f$ at $c_j$. 
    
        Since Nagata dimension is preserved under metric completion, 
        $
            \ndim \overline{B_{c_j} \cap E} = \ndim B_{c_j} \cap E.
        $
        Each $\phi_j$ and $\psi_j^{-1}$ is conformal and hence quasiconformal on compact subsets of their domains and by Lemma~\ref{lemma:QC_invariance_ndim_compact}, they both preserve Nagata dimension. 
    
        By Corollary \ref{cor:zn_preserves_ndim}, $z^{k_j}$ preserves Nagata dimension for each $j$ and since $f = \psi_j^{-1} \circ z^{k_j} \circ \phi_j$ on $B_{c_j}$, we conclude that
        $$ 
            \ndim B_{c_j} \cap E = \ndim f(B_{c_j} \cap E), \qquad j=1,\dots,m. 
        $$    
        Outside the critical neighborhoods, that is, on
        $$ 
        E' := E \setminus \bigcup_{j=1}^m B_{c_j},
        $$
        $f$ is conformal (and hence quasiconformal). Therefore, $f$ preserves Nagata dimension on each of the finitely many sets $E'$ and $B_{c_j} \cap E$, $j=1,\dots,m$. The conclusion follows from Theorem~\ref{thm:finite_stability_ndim}.
    \end{proof}

\begin{corollary}\label{cor:analytic_map_any_subset_compact}
 Let $f: \Omega\to \C$ be an analytic map where $\Omega$ is a domain. If $A\Subset\Omega$ then $\ndim A = \ndim f(A)$.
\end{corollary}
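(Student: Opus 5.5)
The plan is to reduce Corollary \ref{cor:analytic_map_any_subset_compact} to Theorem \ref{thm:analytic_preserve_ndim_compact} by passing to closures. Let $A \Subset \Omega$, so that $E := \overline{A}$ is a compact subset of $\Omega$. By Theorem \ref{thm:analytic_preserve_ndim_compact}, $\ndim E = \ndim f(E)$. It then suffices to establish the two identities
\[
\ndim A = \ndim \overline{A} \qquad\text{and}\qquad \ndim f(A) = \ndim \overline{f(A)} = \ndim f(\overline{A}).
\]

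For the first identity we use Theorem \ref{thm:completion_invariance_ndim}. Since $\C$ is complete, the metric completion of $A$ (with the Euclidean metric) is isometric to its closure $\overline{A}$. Hence $\ndim A = \ndim \overline{A}$. If $A$ is already closed, there is nothing to prove.

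For the second identity, note that $f$ is continuous on the compact set $\overline{A}$. Therefore $f(\overline{A})$ is compact, and in particular closed, and it contains $f(A)$. It follows that $\overline{f(A)} \subset f(\overline{A})$. Conversely, continuity gives $f(\overline{A}) \subset \overline{f(A)}$. Thus $f(\overline{A}) = \overline{f(A)}$, which is the completion of $f(A)$. Applying Theorem \ref{thm:completion_invariance_ndim} once more yields $\ndim f(A) = \ndim f(\overline{A})$.

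Chaining the identities gives
\[
\ndim A = \ndim \overline{A} = \ndim f(\overline{A}) = \ndim f(A).
\]
There is no real obstacle here. The only point that needs care is the equality $f(\overline{A}) = \overline{f(A)}$. This is exactly where compact containment is used: if $\overline{A}$ were allowed to meet $\partial\Omega$, then $f$ might not extend there, and the closure argument would fail.
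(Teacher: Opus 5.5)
Your proof is correct and follows the paper's intended route. The paper states the corollary without proof right after Theorem \ref{thm:analytic_preserve_ndim_compact}, relying on the same closure reduction via Theorem \ref{thm:completion_invariance_ndim} that opens the proof of Lemma \ref{lemma:meromorphic_compact_ndim}, and your check that $f(\overline{A}) = \overline{f(A)}$ supplies the detail the paper leaves implicit. One caveat, inherited from the paper: $f$ must tacitly be non-constant (for constant $f$ and $A$ a closed disc the statement fails), which is also what Theorem \ref{thm:analytic_preserve_ndim_compact} uses when it asserts that $f$ has only finitely many critical points in $E$.
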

    
By Sto{\"\i}low's factorization theorem~\ref{thm:Stoilow}, we immediately obtain the following result.
    
\begin{corollary}
Let $g: \Omega \to \C$ be quasiregular. If $A \Subset \Omega$ then $\ndim A = \ndim g(A)$.
\end{corollary}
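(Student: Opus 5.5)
The plan is to reduce to Corollary \ref{cor:analytic_map_any_subset_compact} via Sto{\"\i}low factorization (Theorem \ref{thm:Stoilow}). Write $g = h' \circ h$, where $h:\Omega\to\Omega'$ is a quasiconformal homeomorphism onto a planar domain $\Omega'$ and $h':\Omega'\to\C$ is analytic. We then track the Nagata dimension through each factor separately: first under $h$, then under $h'$.

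For the first factor, let $A\Subset\Omega$, so that $K:=\overline{A}$ is a compact subset of $\Omega$. Since $h$ is a homeomorphism of $\Omega$ onto $\Omega'$, the set $h(K)=\overline{h(A)}$ (closure taken in $\Omega'$) is compact in $\Omega'$. Hence $h(A)\Subset\Omega'$. Proposition \ref{lemma:QC_invariance_ndim_compact}, applied to the compact set $K$, gives $\ndim K=\ndim h(K)$. To pass from $K$ back to $A$ we use Theorem \ref{thm:completion_invariance_ndim}. The metric completion of $A$ (a subset of $\C$ with the Euclidean metric) is its closure $K$, so $\ndim A=\ndim K$. Likewise $\ndim h(A)=\ndim \overline{h(A)}=\ndim h(K)$. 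Combining these, $\ndim A=\ndim h(A)$.

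Alternatively, we could cover $K$ by finitely many balls on which $h$ is quasisymmetric (Remark \ref{rem:qcqs}). Then Theorem \ref{thm:QS_invariance_ndim} and finite stability (Theorem \ref{thm:finite_stability_ndim}) apply directly to the pieces of $A$, and no closure argument is needed.

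For the second factor, $h(A)\Subset\Omega'$ and $h'$ is analytic on the domain $\Omega'$. Corollary \ref{cor:analytic_map_any_subset_compact} therefore gives $\ndim h(A)=\ndim h'(h(A))=\ndim g(A)$. Chaining the two equalities yields $\ndim A=\ndim g(A)$.

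The only point needing care is the degenerate case where $g$ is constant, so that $h'$ is constant. Then Theorem \ref{thm:analytic_preserve_ndim_compact} fails: there are no isolated critical points, and a constant map sends $A$ to a single point. The statement must therefore be read for non-constant $g$, as is implicit in Theorem \ref{thm:analytic_preserve_ndim_compact}. Beyond this, the one step to verify is that relative compactness in $\Omega$ transfers to relative compactness in $\Omega'$ under $h$. This follows from continuity of $h$ on the compact set $\overline{A}\subset\Omega$, so I expect no real obstacle.
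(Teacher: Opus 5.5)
Your proposal is correct and follows the paper's route. The paper also applies Sto{\"\i}low factorization and then combines the compact-set quasiconformal invariance (Proposition \ref{lemma:QC_invariance_ndim_compact}) for the quasiconformal factor with Corollary \ref{cor:analytic_map_any_subset_compact} for the analytic factor. Two details you supply are correct and are left implicit in the paper: the check that $h(A)\Subset\Omega'$ via $h(\overline{A})=\overline{h(A)}$, and the observation that $g$ must be non-constant, since a constant map is quasiregular under Definition \ref{def:analytic_QR} and collapses $A$ to a point.
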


We conclude this section with the proof of Theorem~\ref{mainthm1:polynomial_preserve_ndim}.

\begin{proof}[Proof of Theorem~\ref{mainthm1:polynomial_preserve_ndim}]
    Let $p(z) = a_nz^n+a_{n-1}z^{n-1}+ \cdots+a_1 z+a_0$ and define
    \begin{equation*}
        r(z):=\iota \circ p\circ \iota(z)=\frac{1}{p(1/z)} = \frac{z^n}{1+a_{n-1}z+\cdots+a_1 z^{n-1}+ a_0z^n}.
    \end{equation*}
Since $p$ is a polynomial, $r$ is analytic at $0$. Hence there exists $\rho>0$ such that $r$ is analytic on the closed disk $\overline{B(0,\rho)}$. By Theorem~\ref{thm:analytic_preserve_ndim_compact}, $r$ preserves the Nagata dimension of subsets of $\overline{B(0,\rho)}$. Moreover, $\iota$ is quasi-M\"obius and thus preserves Nagata dimension by Theorem \ref{thm:QM_invariance_ndim}. Let  $X_o = X \cap \{z : |z|> 1/\rho\}$, then $\ndim X_o = \ndim r \circ \iota (X_o) = \ndim \iota \circ r \circ \iota (X_o)$. Invoking Theorem~\ref{thm:analytic_preserve_ndim_compact} again for $X_i:= X\setminus X_o$, we have $\ndim X_i = \ndim p(X_i)$. By finite stability, we have
    \begin{eqnarray*}
        \ndim X &= &\max\{\ndim X_o, \ndim X_i\} \\
        & = & \max\{\ndim p(X_o), \ndim p(X_i)\} = \ndim p(X)
    \end{eqnarray*}
and the proof is complete.
\end{proof}

\subsection{Nagata dimension and rational functions}\label{sec:rational_maps_ndim}

In this subsection we discuss the invariance properties of Nagata dimension under non-constant meromorphic functions and particularly under rational functions.

\begin{lemma}\label{lemma:meromorphic_around_poles}
Let $f(z)$ be a meromorphic function defined on a domain $\Omega \subset \hatc$.
To each pole of $f$ there corresponds a neighborhood $U$ such that the Nagata dimension of any subset of $U$ is preserved under $f$.
\end{lemma}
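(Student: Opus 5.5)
Near a pole $p$, write $f = \iota \circ g$, where $g := 1/f$ is analytic in a neighborhood of $p$ and has a zero at $p$ of order equal to the order of the pole. The inversion $\iota$ is an isometry of $(\hatc,q)$ and is quasi-M\"obius, so by Theorem~\ref{thm:QM_invariance_ndim} it preserves the Nagata dimension of arbitrary subsets. It therefore suffices to find a neighborhood $U$ of $p$ on which $g$ preserves the Nagata dimension of every subset. The metric needs some care. If $p \in \C$, we work with the Euclidean metric near $p$ in the domain. In the target, $g$ takes values near $0$, where $q$ and the Euclidean metric are bi-Lipschitz equivalent on bounded sets. The identity $(\C,d_E)\to(\C,q)$ is M\"obius in any case. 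If $p=\infty$, we first precompose with $\iota$ and replace $g$ by $g\circ\iota$, which is analytic near $0$. So in every case the problem reduces to the following: an analytic function $g$ on a disk around a point $c$ preserves the Nagata dimension of all subsets of some smaller disk around $c$.

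For this we take $U=B(c,\varepsilon)$ with $\varepsilon$ chosen so that $\overline{B(c,\varepsilon)}$ lies inside the domain of analyticity of $g$. Then every $A\subset U$ is compactly contained in that domain, and Corollary~\ref{cor:analytic_map_any_subset_compact} gives $\ndim A=\ndim g(A)$.

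The corollary itself follows from Theorem~\ref{thm:analytic_preserve_ndim_compact} together with invariance under completion (Theorem~\ref{thm:completion_invariance_ndim}): we have $\ndim A=\ndim\overline A=\ndim g(\overline A)$. We also need $\ndim g(\overline A) = \ndim \overline{g(A)}$. This holds because $g(\overline A)=\overline{g(A)}$, by continuity of $g$ and compactness of $\overline A$, and the completion of $g(A)$ is $\overline{g(A)}$. Combining these with $\iota$ gives $\ndim A=\ndim f(A)$ for all $A\subset U$, where $U$ is punctured or not according to whether $f(p)=\infty$ is included. Since $\infty=\iota(0)$, the pole itself causes no trouble.

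The main obstacle is bookkeeping the metrics: the domain carries the Euclidean or spherical metric depending on whether $p=\infty$, and the target $(\hatc,q)$ contains $\infty$. Each change of metric needs a justification. In the domain it is a Euclidean/spherical bi-Lipschitz comparison on a bounded set. Alternatively, on all of $\C$ it follows from the M\"obius invariance of cross-ratios noted after Theorem~\ref{thm:QM_invariance_ndim}. Near $\infty$ it is the isometry $\iota$ of $q$. After these justifications, everything reduces to the compact analytic case already proved.
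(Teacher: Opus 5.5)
Your proposal is correct and follows essentially the same route as the paper: near a finite pole you write $f=\iota\circ(1/f)$. You then apply Corollary~\ref{cor:analytic_map_any_subset_compact} to the analytic function $1/f$ on a disk compactly contained in its domain of analyticity, apply Theorem~\ref{thm:QM_invariance_ndim} to $\iota$, and treat $p=\infty$ by passing to the coordinate $w=1/z$. Your justification of that corollary (via $g(\overline A)=\overline{g(A)}$ and invariance under completion) and your tracking of Euclidean versus spherical metrics are correct, and they make explicit steps the paper leaves implicit.
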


\begin{proof}
    Let $p$ be a finite pole of $f(z)$. Since poles are isolated, there exists a precompact neighborhood $U$ of $p$ so that $1/f(z) = (z-p)^n h(z)$ where $n\ge 1$ and $h(z)$ is non-vanishing and holomorphic on $U$. Let $K = \Bar{U} \subset \Omega$ denote the  compact closure of $U$. Since $f(z) = \iota(\frac{1}{f(z)})$, Theorem~\ref{thm:QM_invariance_ndim} and Corollary~\ref{cor:analytic_map_any_subset_compact} imply that $f$ preserves Nagata dimension. 

   If $p=\infty$, choose the local coordinate $w=1/z$ and define  $g(w):=f(1/w)$ on a punctured neighborhood of $w=0$. Since $f$ is meromorphic at $\infty$, $g$ is meromorphic at $0$. By the finite pole case applied to $g$ at $w=0$, there exists a disk $V$ about $0$ such that $\ndim B=\ndim g(B)$ for all $B\subset V$. Let $U := \{z\in\Omega : 1/z\in V\}$. For any $A\subset U$, setting $B:=\{1/z : z\in A\}$ gives
   \[
    \ndim A = \ndim B = \ndim g(B) = \ndim f(A),
   \]
   by Theorem~\ref{thm:QM_invariance_ndim} since $\iota$ is quasi-M\"obius.
\end{proof}

\begin{lemma}\label{lemma:meromorphic_compact_ndim}
    Let $f(z)$ be a non-constant meromorphic function on $\Omega \subset \hatc$ and let $A \Subset \Omega$. Then $\ndim A = \ndim f(A)$.
\end{lemma}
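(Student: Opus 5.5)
The plan is to reduce to the two local results already in hand, namely Corollary~\ref{cor:analytic_map_any_subset_compact} for the holomorphic part and Lemma~\ref{lemma:meromorphic_around_poles} near the poles, and then to glue the pieces together with finite stability (Theorem~\ref{thm:finite_stability_ndim}). Since $A \Subset \Omega$, the closure $K := \overline{A}$ is a compact subset of $\Omega$. Poles of a non-constant meromorphic function are isolated, so $K$ contains only finitely many poles $p_1,\dots,p_m$; here $p_j = \infty$ is allowed if $\infty \in \Omega$. Throughout, both the domain and the target are measured in the spherical metric $q$. This is harmless: the identity $(\C,d_E)\to(\C,q)$ is M\"obius and, by Theorem~\ref{thm:QM_invariance_ndim}, does not change Nagata dimension.

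\emph{Step 1 (poles).} For each $p_j$, Lemma~\ref{lemma:meromorphic_around_poles} gives a neighborhood $U_j$ such that $\ndim S = \ndim f(S)$ for every $S \subset U_j$. We apply this with $S = A \cap U_j$.

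\emph{Step 2 (the rest).} Let $A' := A \setminus \bigcup_j U_j$. Its closure $K' := K \setminus \bigcup_j U_j$ is compact, contains no poles, and (after discarding $\infty$, which lies in some $U_j$ if it is a pole) may be assumed to avoid $\infty$. If $\infty \in K'$ is not a pole, we handle a small neighborhood of $\infty$ separately by composing with $\iota$ on both sides where needed; since $\iota$ is quasi-M\"obius, it preserves Nagata dimension. On a neighborhood $W \Subset \Omega\setminus\{p_1,\dots,p_m,\infty\}$ of $K'$, the function $f$ is holomorphic with finite values, so Corollary~\ref{cor:analytic_map_any_subset_compact} gives $\ndim A' = \ndim f(A')$ in the Euclidean metric. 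The image $f(\overline{W})$ is bounded, so on it $q$ is bi-Lipschitz to the Euclidean metric, and the equality also holds with spherical metric on the target.

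\emph{Step 3 (gluing).} Finally we write $A = A' \cup \bigcup_j (A\cap U_j)$, a finite union, and apply Theorem~\ref{thm:finite_stability_ndim} to both $A$ and $f(A) = f(A') \cup \bigcup_j f(A\cap U_j)$. This yields $\ndim A = \ndim f(A)$.

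The main obstacle is the bookkeeping between the metrics. We must make sure that Corollary~\ref{cor:analytic_map_any_subset_compact} is applied on a region that is compactly contained in the pole-free part and avoids $\infty$, so that both the domain and the image are bounded; there Euclidean and spherical metrics are bi-Lipschitz equivalent. The point $\infty$, when it belongs to $\Omega$ and is not a pole, needs the separate chart $w = 1/z$ described in Step 2.
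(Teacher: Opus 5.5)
Your proposal is correct and follows essentially the same route as the paper: isolate the finitely many poles in $\overline{A}$ using the neighborhoods from Lemma~\ref{lemma:meromorphic_around_poles}, apply the analytic invariance result (Theorem~\ref{thm:analytic_preserve_ndim_compact}, equivalently Corollary~\ref{cor:analytic_map_any_subset_compact}) to the pole-free remainder, and glue the pieces with Theorem~\ref{thm:finite_stability_ndim}. Your extra bookkeeping with the spherical metric and with a non-pole point at $\infty$ makes explicit details that the paper's proof leaves implicit.
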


\begin{proof}
Since Nagata dimension is invariant under closure, it suffices to assume that $A$ is compact.

If $f$ has no poles in $A$, then $f$ is holomorphic on a neighborhood of $A$, and the claim follows from Theorem~\ref{thm:analytic_preserve_ndim_compact}.

Otherwise, since the poles of a meromorphic function are isolated and $A$ is compact, $A$ contains only finitely many poles. Let $p_1,\dots,p_k$ be these poles (allowing $p_j=\infty$ if $\infty\in\Omega$). By Lemma \ref{lemma:meromorphic_around_poles}, there exists a neighborhood $U_j$ for each $p_j$ such that $f(z)$ preserves Nagata dimension of any subset of $U_j$. We write
\begin{equation*}\label{eqn:meromorphic_decompose_E}
A= \bigcup_{j=1}^{k} (A \cap U_j) \cup \Bigl(A \setminus \bigcup_{j=1}^{k} U_j\Bigr).
\end{equation*}
For each $j$, $f$ preserves the Nagata dimension of $A \cap U_j$ by Lemma~\ref{lemma:meromorphic_around_poles}. On $A \setminus \bigcup_{j=1}^{k} U_j$, the function $f$ is analytic, and Theorem~\ref{thm:analytic_preserve_ndim_compact} implies that $f$ preserves the Nagata dimension there as well. Theorem~\ref{thm:finite_stability_ndim} applied to the above decomposition of $A$ yields $\ndim A = \ndim f(A)$.
\end{proof}

\begin{proof}[Proof of Theorem~\ref{mainthm2:rational_preserve_ndim}]
Let $X$ be any subset of $\C$. If $X$ is bounded, then $ \ndim X = \ndim r(X)$ by Lemma \ref{lemma:meromorphic_compact_ndim}.

Suppose that $X$ is unbounded. Define $g(w):=r(1/w) = (r \circ \iota)(w)$, which is again rational (and hence meromorphic on $\hat{\C}$). Choose a bounded neighborhood $U$ of $0$ in $\C$. By  Lemma~\ref{lemma:meromorphic_compact_ndim}, $g$ preserves Nagata dimension of any subset of $U$.
Let
    \begin{eqnarray*}
         U' := \iota(U) = \{z\in\C : 1/z \in U\}.
    \end{eqnarray*}

    By Theorem~\ref{thm:QM_invariance_ndim}, 
    \begin{equation*}
        \ndim A' = \ndim \iota(A') = \ndim g(\iota(A')) = \ndim r(A')
    \end{equation*}
    for every subset $A'\subset U'$. Hence $r$ preserves Nagata dimension of subsets of $X\cap U'$.
    The complement $V := \hat{\C} \setminus U'$ is bounded in $\C$, whence $\ndim(X \cap V) = \ndim(r(X\cap V))$ by Lemma \ref{lemma:meromorphic_compact_ndim}. Since $X = (X \cap U') \cup (X \cap V)$,
    finite stability of the Nagata dimension (Theorem~\ref{thm:finite_stability_ndim}) implies $\ndim(r(X)) = \ndim(X)$.
\end{proof}

\section{Entire functions and Nagata dimension}\label{sec:entire_maps_ndim}

In view of Theorem~\ref{mainthm1:polynomial_preserve_ndim}, it is natural to ask whether there exist other classes of entire functions that preserve the Nagata dimension of arbitrary subsets of~$\C$. In this section we show that this is not the case, by establishing several dimension distortion results for entire functions.

\subsection{Increase in Nagata dimension under entire and meromorphic functions}

In this short section we give the proofs of Theorem~\ref{thm:entire_func_increase_ndim} and Corollary \ref{cor:polynomial-characterization}. We also discuss extensions of these results to meromorphic functions.

\smallskip

By Picard's great theorem \cite[Ch.\ 10, §4]{RemmertComplexFunctionTheory}, in every neighborhood of $\infty$, $f$ attains every complex value infinitely many times, with at most one exception. Let $W = \C \setminus f(\C)$ and note that $\#W \le 1$.

Let $Y \subset \C$ and let $\{y_n\}_{n\in \N} \subset \C\setminus W$ be a countable set which is dense in $Y$. Choose a strictly increasing sequence of radii $(R_i)_{i\ge 1}$ with $R_1 = 1$ and $R_{i+1} > 2 R_i$ together with values $(x_i)$ so that $\tfrac12 R_{i+1} > |x_i| > R_i$ and $f(x_i) = y_i$. Then the countable set $X = \{x_i\} \subset \Omega$ satisfies $|x_{i+1}| >  R_{i+1} > 2|x_i| > |x_i | > R_i$. By Lemma \ref{lemma:ratio>1_ndim_0}, $\ndim(X) = 0$. Furthermore, $f(X) = \{y_n\}$ is dense in $Y$.

This completes the proof of Theorem~\ref{thm:entire_func_increase_ndim}. Corollary \ref{cor:polynomial-characterization} is an immediate consequence of Theorem \ref{mainthm1:polynomial_preserve_ndim} and Theorem~\ref{thm:entire_func_increase_ndim}.

\begin{remark}
The same conclusion holds for functions $f:\C \to \hat{\C}$ which are meromorphic with an essential singularity at $\infty$, to wit, for any $Y \subset \hat{\C}$ there exists $X \subset \C$ with $\dim_N X = 0$ and $f(X)$ is dense in $Y$. Here one uses the meromorphic version of Picard's great theorem \cite[Chapter XVI, Prefatory Remarks and Exercise 2.2]{heins:cft}, which states that a meromorphic function $g$ in a punctured disc $B(z_0,\delta) \setminus \{z_0\}$ with an essential singularity at $z_0$ takes on all values of $\C$ infinitely often, with two exceptions. The proof proceeds as before, applying this version of Picard's great theorem to the function $g(z) = f(\tfrac1z)$. From this conclusion and Theorem~\ref{mainthm2:rational_preserve_ndim} one again quickly derives the following
\end{remark}

\begin{corollary}
Let $f:\C \to \hat{\C}$ be meromorphic. Then $f$ is a nonconstant rational function if and only if $\dim_N X = \dim_N f(X)$ for every $X \subset \C$.
\end{corollary}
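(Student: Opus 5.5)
The plan is to prove the two directions separately, one for each implication.

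\emph{Rational $\Rightarrow$ preservation.} Let $f:\C\to\hatc$ be a nonconstant rational function and let $X\subset\C$. By Theorem~\ref{mainthm2:rational_preserve_ndim}, $\ndim X=\ndim f(X)$ with respect to the spherical metric $q$. The identity $(\C,d_E)\to(\C,q)$ preserves cross-ratios, so it is M\"obius, and Theorem~\ref{thm:QM_invariance_ndim} shows that Nagata dimensions of subsets of $\C$ agree in the two metrics. The images $f(X)$ live in $\hatc$, so the dimension is understood spherically there, consistent with Theorem~\ref{mainthm2:rational_preserve_ndim}. This direction is therefore just bookkeeping.

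\emph{Preservation $\Rightarrow$ nonconstant rational.} First, $f$ cannot be constant: $\Z\subset\C$ has Nagata dimension one, while its image under a constant map is a single point, of dimension zero. So assume $f$ is nonconstant meromorphic on $\C$. Then $\infty$ is either a removable singularity, a pole, or an essential singularity. In the first two cases $f$ extends meromorphically to $\hatc$ and is therefore rational. It remains to rule out the essential singularity case.

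Suppose $\infty$ is an essential singularity of $f$. Apply the meromorphic Picard theorem to $g(z)=f(1/z)$ near $0$: every value of $\hatc$ outside an exceptional set $W$ with $\#W\le 2$ is attained infinitely often in every neighborhood of $\infty$. Take $Y=\overline{\D}$, which has Nagata dimension two. Choose a countable dense set $\{y_i\}\subset \D\setminus W$; this is possible because removing at most two points does not destroy density. Construct $x_i$ inductively, using radii $R_{i+1}>2R_i$ and requiring $R_i<|x_i|<\tfrac12R_{i+1}$ with $f(x_i)=y_i$. Then $|x_{i+1}|>2|x_i|$, so Lemma~\ref{lemma:ratio>1_ndim_0} gives $\ndim\{x_i\}=0$. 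On the other hand, $f(\{x_i\})$ is dense in $\overline{\D}$, so by Theorem~\ref{thm:completion_invariance_ndim} its Nagata dimension equals $\ndim\overline{\D}=2$. This contradicts preservation.

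The only mildly delicate points are, first, ensuring the Picard exceptional values do not obstruct density, which is handled above, and second, the metric used on the target. Since the $y_i$ lie in a bounded set, the spherical and Euclidean metrics there are bi-Lipschitz equivalent, so either convention gives dimension two.
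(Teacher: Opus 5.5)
Your proof is correct and follows the paper's route: Theorem~\ref{mainthm2:rational_preserve_ndim} gives the forward direction, and the converse comes from the meromorphic Picard version of the construction in Theorem~\ref{thm:entire_func_increase_ndim}, where you specialize to $Y=\overline{\D}$ and also dispose of the constant case explicitly. The only wording to tighten is your trichotomy at $\infty$: poles of $f$ may accumulate at $\infty$, so ``essential singularity'' should be read as ``$f$ does not extend meromorphically to $\hatc$'', and the meromorphic Picard theorem still applies in that generality.
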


\subsection{Two preliminary lemmas}

We state several lemmas which will be used in the proof of Theorem~\ref{thm:entire_genus_1_decrease_ndim}.

\begin{lemma}[Radial growth]\label{lemma:radial-growth}
    Let $q(z)=a_d z^d+\cdots+a_1 z+a_0$ be a nonconstant polynomial of degree $d\ge 1$, with $a_d>0$.
    Then there exist constants $R_0,c_0>0$ (depending only on $q$) such that for all $r\ge R_0$
    and all $h>0$,
    \[ 
    \Re\big(q(r+h)-q(r)\big)\ \ge\ c_0~ r^{~d-1}~ h.
    \]
\end{lemma}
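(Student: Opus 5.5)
The plan is to reduce the statement to a one-variable real calculus estimate. Since $r$ and $h$ are real, we have $\Re q(x) = p(x)$ for real $x$, where
\[
p(x) := a_d x^d + \Re(a_{d-1})\,x^{d-1} + \cdots + \Re(a_1)\,x + \Re(a_0)
\]
is a real polynomial; this uses that $a_d>0$ is real. Hence
\[
\Re\big(q(r+h)-q(r)\big) = p(r+h)-p(r) = \int_r^{r+h} p'(t)\,dt ,
\]
and it suffices to bound $p'(t)$ from below on $[r,r+h]$.

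Next I will bound the derivative. We have $p'(t) = d\,a_d\,t^{d-1} + \sum_{k=1}^{d-1} k\,\Re(a_k)\,t^{k-1}$. If $d=1$, then $p'(t)=a_1>0$, and we may take $c_0=a_1$ and $R_0=1$. If $d\ge 2$, then for $t\ge 1$ the lower-order sum is bounded in absolute value by $M t^{d-2}$, where $M=\sum_{k<d} k|a_k|$. I will choose $R_0 \ge \max\{1, 2M/(d\,a_d)\}$. Then for $t\ge R_0$,
\[
p'(t) \ge d\,a_d\,t^{d-1} - M t^{d-2} \ge \tfrac{d\,a_d}{2}\,t^{d-1}.
\]

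Finally I integrate. For $r\ge R_0$ and $t\in[r,r+h]$, monotonicity of $t^{d-1}$ gives $p'(t)\ge \tfrac{d a_d}{2} r^{d-1}$. Integrating over an interval of length $h$ yields the claim with $c_0 = d\,a_d/2$. Both constants depend only on the coefficients of $q$, as required.

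There is no substantial obstacle in this argument. The only points needing care are the observation that the imaginary parts of the lower coefficients play no role on the real axis, which is why the hypothesis $a_d>0$ is essential, and the choice of $R_0$ so that the leading term dominates.
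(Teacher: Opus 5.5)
Your proposal is correct and is essentially the paper's argument: both bound $\Re q'(t)\ge \tfrac12 d a_d\,t^{d-1}$ for $t\ge R_0=\max\{1,2C/(d a_d)\}$ by letting the leading term dominate, then pass to the increment. The only differences are cosmetic: the paper uses the mean value theorem for $g(t)=\Re q(t)$ where you integrate $p'$, and your separate treatment of $d=1$ is not needed, since the general choice $c_0=\tfrac12 d a_d$ already covers it.
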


\begin{proof}
    Write
    \[
    q'(z)=d a_d z^{d-1}+\sum_{m=0}^{d-2} c_m z^{m}.
    \]
    For real $t\ge 1$,
    \[
    \Re q'(t)\ \ge d a_d~ t^{d-1}-\sum_{m=0}^{d-2} |c_m|~ t^{m}
    \ \ge d a_d~ t^{d-1}-C~ t^{d-2},
    \]
    where $C:=\sum_{m=0}^{d-2}|c_m|$. Choose
    \[
    R_0\ \ge \max\Big\{1,\ \frac{2C}{d a_d}\Big\}
    \]
and set $c_0:=\tfrac12 d a_d$. Then for all $t\ge R_0$,
    \[
    \Re q'(t)\ \ge \tfrac12 d a_d~ t^{d-1}\ =\ c_0~ t^{d-1}.
    \]
    Define $g(t):=\Re q(t)$ for real $t$. By the mean value theorem, for any $r\ge R_0$ and $h>0$ there exists $\xi\in(r,r+h)$ with
    \[
    \Re\big(q(r+h)-q(r)\big) = g(r+h)-g(r) = g'(\xi)~h = \Re q'(\xi)~h
     \ge c_0~ \xi^{~d-1}~ h \ge c_0~ r^{~d-1}~ h,
    \]
    since $\xi\ge r$. This proves the claim.
\end{proof}

\begin{lemma}[Sublinear growth]\label{lemma:sublinear_growth_zeroes}
Let $A \subset \C$ be a discrete set with $\sum_{\alpha \in A} |\alpha|^{-1} < \infty$. Let $m_R(A) := \#\{\alpha \in A : \alpha \in B(0,R)\}$. Then $\lim_{R \to \infty} m_R(A)/R = 0$.
\end{lemma}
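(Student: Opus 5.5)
The plan is to compare the counting function $m_R(A)$ with the tail of the convergent series $\sum_{\alpha\in A}|\alpha|^{-1}$ by a standard dyadic (Abel-type) argument. Since $A$ is discrete, each ball $B(0,R)$ contains only finitely many points of $A$, so $m_R(A)<\infty$. We may also discard the point $0$ if it lies in $A$; it changes $m_R(A)$ by at most one and does not affect the limit. Enumerate $A=\{\alpha_j\}$ with $|\alpha_j|$ nondecreasing. Fix $\varepsilon>0$ and choose $R_\varepsilon$ so that
\[
\sum_{|\alpha|\ge R_\varepsilon}|\alpha|^{-1}<\varepsilon .
\]

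The key estimate is the following: for $R>R_\varepsilon$,
\[
\frac{m_R(A)-m_{R_\varepsilon}(A)}{R}\;\le\;\sum_{R_\varepsilon\le|\alpha|<R}\frac{1}{R}\;\le\;\sum_{R_\varepsilon\le|\alpha|<R}\frac{1}{|\alpha|}\;<\;\varepsilon .
\]
This holds because every $\alpha$ counted in the numerator satisfies $|\alpha|<R$, so $1/R<1/|\alpha|$.

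It follows that
\[
\frac{m_R(A)}{R}\le\frac{m_{R_\varepsilon}(A)}{R}+\varepsilon .
\]
Here $m_{R_\varepsilon}(A)$ is a fixed finite number, so the first term tends to $0$ as $R\to\infty$. Hence $\limsup_{R\to\infty} m_R(A)/R\le\varepsilon$, and since $\varepsilon>0$ was arbitrary, the limit is $0$.

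There is no real obstacle here. The only points needing care are the finiteness of $m_{R}(A)$, which follows from discreteness (or directly from summability, since the sum over $A\cap B(0,R)$ would otherwise diverge), and the possible point at the origin.
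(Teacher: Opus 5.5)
Your proof is correct, but it is not the dyadic argument your first sentence announces. It is a more direct route than the paper's.

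The paper works in stages:
\begin{itemize}
\item it groups $A$ into dyadic annuli $\{2^n\le|z|<2^{n+1}\}$ with counts $M_n$;
\item it deduces $\sum_n M_n2^{-n}<\infty$, and hence $M_n\le\varepsilon 2^n$ for large $n$;
\item it sums this geometric bound to control $m_{2^{n+1}}(A)/2^{n+1}$;
\item it interpolates to arbitrary $R$ via $2^n\le R<2^{n+1}$, at the cost of a factor $2$.
\end{itemize}

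You instead compare every point with $R_\varepsilon\le|\alpha|<R$ directly against $R$, using $1/R<1/|\alpha|$. This gives
\[
\frac{m_R(A)}{R}\le\frac{m_{R_\varepsilon}(A)}{R}+\sum_{|\alpha|\ge R_\varepsilon}|\alpha|^{-1}
\]
for every $R>R_\varepsilon$ at once. So you need neither the second summation nor the passage from dyadic radii to general $R$.

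The paper's route does record the intermediate bound $\sum_n M_n2^{-n}<\infty$ on dyadic counts. However, only the conclusion $m_R(A)/R\to0$ is used later in the paper, so for the lemma as stated your argument is the more economical one.

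Your side remarks are also sound. Finiteness of $m_R(A)$ follows from summability, since $|\alpha|^{-1}>1/R$ on $B(0,R)$. The point $0$ is tacitly excluded by the hypothesis anyway.
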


\begin{proof}
For $n\in\N$ let $\mathcal A_n:=\{z\in\C:2^n\le |z|<2^{n+1}\}$ and set $M_n:= \# A\cap\mathcal A_n$. For every $\alpha\in A\cap\mathcal A_n$ we have $|\alpha|<2^{n+1}$, hence
    \[
        \sum_{\alpha\in A\cap\mathcal A_n}\frac{1}{|\alpha|}
        \;\ge\; \frac{M_n}{2^{n+1}}.
    \]
Summing over $n$ gives
    \[
        \infty > \sum_{\alpha \in A}\frac{1}{|\alpha|}
        \;\ge\;\sum_{n=0}^\infty \sum_{\alpha\in A\cap\mathcal A_n}\frac{1}{|\alpha|}
        \;\ge\;\sum_{n=0}^\infty \frac{M_n}{2^{n+1}},
    \]
so the series $\sum_{n\ge0} M_n/2^n$ converges. In particular,
    \begin{equation}\label{eq:m-n}
        2^{-n} M_n \xrightarrow[n\to\infty]{}0.
    \end{equation}
For $n\in \N$ we have $m_{2^{n+1}}(A) = \sum_{k=0}^{n} M_k$. Fix $\varepsilon>0$. By \eqref{eq:m-n} there exists $K$ such that $M_k\le \varepsilon 2^k$ for all $k\ge K$. Hence, for $n\ge K$,
    \[
        m_{2^{n+1}}(A)
    =\sum_{k=0}^{K-1} m_k+\sum_{k=K}^{n} M_k
    \;\le\; C+\varepsilon\sum_{k=K}^{n}2^k
    \;\le\; C+\varepsilon~(2^{n+1}-2^K),
    \]
where $C =\sum_{k=0}^{K-1} M_k$. Dividing by $2^{n+1}$ and letting $n\to\infty$ yields
    \[
        \limsup_{n\to\infty}\frac{m_{2^{n+1}}(A)}{2^{n+1}}\le \varepsilon.
    \]
Since $\varepsilon>0$ was arbitrary,
    \[
        \lim_{n\to\infty}\frac{m_{2^{n+1}}(A)}{2^{n+1}}=0.
    \]
Finally, for general $R>0$ pick $n$ with $2^n\le R<2^{n+1}$. Then
    \[
    0 \le \frac{m_R(A)}{R}\ \le \frac{m_{2^{n+1}}(A)}{2^n} = 2\frac{m_{2^{n+1}}(A)}{2^{n+1}}
    \ \xrightarrow[n\to\infty]{}\ 0.
    \]
    Thus $\lim_{R\to\infty} m_R(A)/R=0$.
\end{proof}

\subsection{Decrease in Nagata dimension under entire functions}

This section is devoted to the proof of Theorem~\ref{thm:entire_genus_1_decrease_ndim}. As the proof is long, we begin with a sketch. 

\medskip

\paragraph{\it Sketch of the proof:} We build a sequence $X = \{z_n\}_{n\in\N}$ iteratively inside dyadic annuli, enumerated $D_n$, whose widths are chosen using the Nagata constant, $c_A$, of the zero set $A$. We also pick a sequence of increasing scales $\{s_n\}$ determined by $c_A$. By using the definition of Nagata dimension for $A$, in each $D_n$ there is a long interval on the real axis that avoids $A$ by at least $\varepsilon s_n$. For fixed $\delta>0$, we insert a $\delta$–spaced arithmetic patch into each such interval. This produces arbitrarily long arithmetic chains in $X$, yielding $\ndim(X)=1$. Next, we prove that along a suitable enumeration, the ratio $|f(z_{k+1})/f(z_k)|$ is eventually $\ge \Lambda >1$, which follows from radial growth of $e^{q(z)}$ combined with control of the infinite product over the zeros. This ensures that $\ndim(f(X))=0$. Note that we first construct a preliminary sequence $\tilde{X}$, and then prune it to keep only those points that satisfy the desired growth properties; the resulting subsequence is the set $X$ used in the argument.

\medskip

\noindent This completes our sketch of the proof, and we now provide a more detailed argument. First, note that if the set $A$ in Theorem~\ref{thm:entire_genus_1_decrease_ndim} has Nagata dimension one, then we may choose $X = A$ and observe that $\dim_N F(X) = 0 \ne \dim_N X$ (since $F(X) = \{0\}$). We may thus assume without loss of generality that $\dim_N(A) = 0$.

We begin by constructing the relevant set $X$.  Let $q(z)=a_d z^d+\cdots+a_0$ with $a_d\neq0$. Applying a rotation if necessary, we may assume that $a_d>0$. 

Since $\ndim(A) = 0$, there exists $c_A \ge 1$ so that for any $s >0$ there exists a $c_As$-bounded and $s-$disjoint collection $\mathcal{B}(s) = \{B_i\}_{i\in I}$ which covers $A$. More precisely, each such set $B_i$ is a subset of $A$ with $\diam(B_i) \le c_A s$, and $\dist(B_i, B_j) \ge s$ whenever $i, j \in I, i\neq j$. 

For each $n \in \N$, define
\begin{equation}\label{eqn:dyadic_annulus}
        D_n := \left\{z : \tfrac{4}{c_A-1} c_A^{n+2} \le |z| < \tfrac{4}{c_A-1} c_A^{n+3}\right\}\subset \C,
\end{equation}
and set
$$
D_{\le n} := \bigcup_{1 \le i \le n} D_i
$$
and
$$
D_{\ge n} := \bigcup_{i \ge n} D_i.
$$
Finally, 
let $s_n := c_A^n$ and let $\mathcal{B}_n = \{B_{i,n}\}_{i\in I_n}$ denote a $c_A s_n$-bounded and $s_n$ disjoint cover of $A$ as described above.
        
Fix $0<\varepsilon<10^{-2}$. For a set $E \in \C$ and $\rho >0$, denote the open $\rho$–neighborhood by $N_\rho(E):= \{z\in\C : \exists w\in E \text{ so that } |z-w| \le \rho\}$. Define the (thickened) sets
        \[
            B_{i,n}^{\varepsilon}:=N_{\varepsilon s_n}(B_{i,n}),
            \qquad \mathcal B_n^{\varepsilon}:=\{B_{i,n}^{(\varepsilon)}\}_{i\in I_n},
        \]
        and set
        \[
            \Omega_n^\varepsilon := \left( \bigcup_{i\in I} B_{i,n}^\varepsilon\right)^c \subset \C.
        \]
See Figure \ref{fig:thickened-sets} for an illustration.

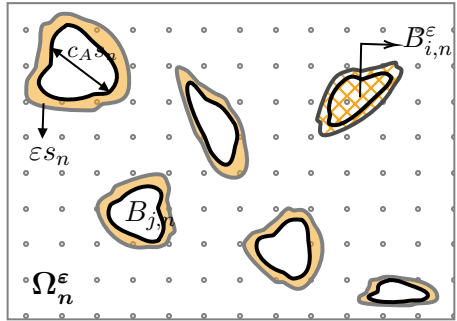
\begin{figure}[h]
    \centering    
    \usetikzlibrary{patterns}
    
     
    \tikzset{
    pattern size/.store in=\mcSize, 
    pattern size = 5pt,
    pattern thickness/.store in=\mcThickness, 
    pattern thickness = 0.3pt,
    pattern radius/.store in=\mcRadius, 
    pattern radius = 1pt}
    \makeatletter
    \pgfutil@ifundefined{pgf@pattern@name@_w08421ecj}{
    \makeatletter
    \pgfdeclarepatternformonly[\mcRadius,\mcThickness,\mcSize]{_w08421ecj}
    {\pgfpoint{-0.5*\mcSize}{-0.5*\mcSize}}
    {\pgfpoint{0.5*\mcSize}{0.5*\mcSize}}
    {\pgfpoint{\mcSize}{\mcSize}}
    {
    \pgfsetcolor{\tikz@pattern@color}
    \pgfsetlinewidth{\mcThickness}
    \pgfpathcircle\pgfpointorigin{\mcRadius}
    \pgfusepath{stroke}
    }}
    \makeatother
    
     
    \tikzset{
    pattern size/.store in=\mcSize, 
    pattern size = 5pt,
    pattern thickness/.store in=\mcThickness, 
    pattern thickness = 0.3pt,
    pattern radius/.store in=\mcRadius, 
    pattern radius = 1pt}
    \makeatletter
    \pgfutil@ifundefined{pgf@pattern@name@_9a8sws3w3}{
    \pgfdeclarepatternformonly[\mcThickness,\mcSize]{_9a8sws3w3}
    {\pgfqpoint{0pt}{0pt}}
    {\pgfpoint{\mcSize}{\mcSize}}
    {\pgfpoint{\mcSize}{\mcSize}}
    {
    \pgfsetcolor{\tikz@pattern@color}
    \pgfsetlinewidth{\mcThickness}
    \pgfpathmoveto{\pgfqpoint{0pt}{\mcSize}}
    \pgfpathlineto{\pgfpoint{\mcSize+\mcThickness}{-\mcThickness}}
    \pgfpathmoveto{\pgfqpoint{0pt}{0pt}}
    \pgfpathlineto{\pgfpoint{\mcSize+\mcThickness}{\mcSize+\mcThickness}}
    \pgfusepath{stroke}
    }}
    \makeatother
    
     
    \tikzset{
    pattern size/.store in=\mcSize, 
    pattern size = 5pt,
    pattern thickness/.store in=\mcThickness, 
    pattern thickness = 0.3pt,
    pattern radius/.store in=\mcRadius, 
    pattern radius = 1pt}
    \makeatletter
    \pgfutil@ifundefined{pgf@pattern@name@_sxav00h5x}{
    \pgfdeclarepatternformonly[\mcThickness,\mcSize]{_sxav00h5x}
    {\pgfqpoint{0pt}{0pt}}
    {\pgfpoint{\mcSize}{\mcSize}}
    {\pgfpoint{\mcSize}{\mcSize}}
    {
    \pgfsetcolor{\tikz@pattern@color}
    \pgfsetlinewidth{\mcThickness}
    \pgfpathmoveto{\pgfqpoint{0pt}{\mcSize}}
    \pgfpathlineto{\pgfpoint{\mcSize+\mcThickness}{-\mcThickness}}
    \pgfpathmoveto{\pgfqpoint{0pt}{0pt}}
    \pgfpathlineto{\pgfpoint{\mcSize+\mcThickness}{\mcSize+\mcThickness}}
    \pgfusepath{stroke}
    }}
    \makeatother
    \tikzset{every picture/.style={line width=0.75pt}} 
    
    \begin{tikzpicture}[x=0.75pt,y=0.75pt,yscale=-.6,xscale=.6]
    
    \draw  [color={rgb, 255:red, 128; green, 128; blue, 128 }  ,draw opacity=1 ][pattern=_w08421ecj,pattern size=13.5pt,pattern thickness=0.75pt,pattern radius=0.75pt, pattern color={rgb, 255:red, 128; green, 128; blue, 128}] (176.31,76.8) -- (551.81,76.8) -- (551.81,342.8) -- (176.31,342.8) -- cycle ;
    \draw [color={rgb, 255:red, 128; green, 128; blue, 128 }  ,draw opacity=1 ][fill={rgb, 255:red, 245; green, 166; blue, 35 }  ,fill opacity=0.54 ][line width=1.5] [line join = round][line cap = round]   (203.93,105.29) .. controls (203.93,115.13) and (201.26,120.17) .. (200.11,130.13) .. controls (199.78,132.93) and (192.7,139.75) .. (192.47,143.01) .. controls (192.13,147.93) and (190.57,154.98) .. (194.38,158.65) .. controls (202.86,166.83) and (238.12,167.24) .. (247.83,166.93) .. controls (251.54,166.81) and (253.69,161.77) .. (257.38,161.41) .. controls (268.57,160.33) and (277.5,159.24) .. (278.38,145.77) .. controls (279.97,121.2) and (266.07,96.81) .. (244.01,91.49) .. controls (239.05,90.3) and (232.74,86.46) .. (228.74,85.97) .. controls (226.22,85.67) and (223.6,85.49) .. (221.11,85.97) .. controls (216.4,86.88) and (213.58,92.69) .. (211.56,93.33) .. controls (209.52,93.99) and (207.17,93.89) .. (205.83,95.17) .. controls (204.4,96.56) and (203.93,101.99) .. (203.93,104.37) ;
    \draw [color={rgb, 255:red, 0; green, 0; blue, 0 }  ,draw opacity=1 ][fill={rgb, 255:red, 255; green, 255; blue, 255 }  ,fill opacity=1 ][line width=1.5] [line join = round][line cap = round]   (224.92,96.09) .. controls (210.03,105.66) and (214.21,111.98) .. (212.52,128.29) .. controls (212.5,128.41) and (212.47,128.53) .. (212.4,128.67) .. controls (211.14,131.42) and (200.15,138.99) .. (202.97,145.77) .. controls (209.31,161.05) and (235.65,159.46) .. (246.88,154.05) .. controls (250.9,152.11) and (258.86,154.46) .. (262.15,151.29) .. controls (271.47,142.31) and (268.71,132.57) .. (262.55,124.11) .. controls (258.88,119.08) and (254,114.51) .. (249.74,110.81) .. controls (241.78,103.91) and (237.5,95.17) .. (225.88,95.17) ;
    \draw [color={rgb, 255:red, 128; green, 128; blue, 128 }  ,draw opacity=1 ][fill={rgb, 255:red, 245; green, 166; blue, 35 }  ,fill opacity=0.54 ][line width=1.5] [line join = round][line cap = round]   (314.95,261.31) .. controls (312.13,254.44) and (312.51,250.18) .. (310.44,242.91) .. controls (309.85,240.87) and (312.72,234.14) .. (311.94,231.8) .. controls (310.76,228.28) and (309.8,222.92) .. (306.17,221.42) .. controls (298.06,218.08) and (273.98,227.62) .. (267.47,230.54) .. controls (264.99,231.66) and (264.97,235.78) .. (262.57,237.05) .. controls (255.27,240.93) and (249.51,244.18) .. (252.77,253.82) .. controls (258.72,271.4) and (275.15,284.55) .. (291.65,282.11) .. controls (295.37,281.56) and (300.76,282.48) .. (303.61,281.7) .. controls (305.41,281.21) and (307.24,280.61) .. (308.8,279.58) .. controls (311.74,277.63) and (312,272.79) .. (313.18,271.78) .. controls (314.38,270.76) and (316.01,270.17) .. (316.55,268.9) .. controls (317.13,267.54) and (315.89,263.62) .. (315.21,261.95) ;
    \draw [color={rgb, 255:red, 128; green, 128; blue, 128 }  ,draw opacity=1 ][fill={rgb, 255:red, 245; green, 166; blue, 35 }  ,fill opacity=0.54 ][line width=1.5] [line join = round][line cap = round]   (353.21,153.78) .. controls (347.49,151.57) and (343.57,147.69) .. (337.34,144.27) .. controls (335.6,143.31) and (329,134.49) .. (327.01,133.52) .. controls (324.03,132.07) and (319.34,128.88) .. (318.62,131.97) .. controls (317.02,138.84) and (329.89,174.99) .. (333.68,185.04) .. controls (335.13,188.88) and (338.86,192.22) .. (340.44,196.09) .. controls (345.23,207.84) and (349.18,217.26) .. (357.35,221.19) .. controls (372.23,228.36) and (381.25,219.56) .. (376.15,198.09) .. controls (375,193.25) and (374.89,187.64) .. (373.68,183.64) .. controls (372.92,181.11) and (372.05,178.46) .. (370.84,175.79) .. controls (368.56,170.74) and (364.14,166.54) .. (363.01,164.32) .. controls (361.87,162.08) and (361.06,159.69) .. (359.81,158.02) .. controls (358.47,156.24) and (355.14,154.53) .. (353.75,153.99) ;
    \draw [color={rgb, 255:red, 0; green, 0; blue, 0 }  ,draw opacity=1 ][fill={rgb, 255:red, 255; green, 255; blue, 255 }  ,fill opacity=1 ][line width=1.5] [line join = round][line cap = round]   (366.37,177.43) .. controls (355.27,159.98) and (353.15,162.85) .. (343.03,157.44) .. controls (342.96,157.4) and (342.87,157.33) .. (342.77,157.24) .. controls (340.7,155.32) and (332.21,142.33) .. (329.31,143.69) .. controls (322.78,146.77) and (333.5,174.2) .. (340.82,186.96) .. controls (343.44,191.52) and (345.04,199.18) .. (348.1,203.27) .. controls (356.79,214.88) and (361.43,214.23) .. (364.06,209.8) .. controls (365.62,207.16) and (366.47,203.17) .. (367.04,199.63) .. controls (368.09,193) and (371.58,190.56) .. (367.26,178.62) ;
    \draw [color={rgb, 255:red, 128; green, 128; blue, 128 }  ,draw opacity=1 ][fill={rgb, 255:red, 245; green, 166; blue, 35 }  ,fill opacity=0.54 ][line width=1.5] [line join = round][line cap = round]   (406.26,254.43) .. controls (401.5,260.12) and (397.56,261.78) .. (392.09,267) .. controls (390.56,268.46) and (383.26,269.06) .. (381.56,270.84) .. controls (378.98,273.53) and (374.69,276.87) .. (375.06,280.79) .. controls (375.88,289.51) and (395.54,306.38) .. (401.16,310.77) .. controls (403.31,312.45) and (406.96,310.55) .. (409.21,312.08) .. controls (416.04,316.73) and (421.59,320.32) .. (428.61,312.94) .. controls (441.39,299.48) and (445.37,278.82) .. (435.52,265.35) .. controls (433.3,262.32) and (431.61,257.12) .. (429.59,254.96) .. controls (428.32,253.59) and (426.93,252.26) .. (425.29,251.36) .. controls (422.2,249.67) and (417.8,251.69) .. (416.35,251.12) .. controls (414.89,250.53) and (413.61,249.37) .. (412.24,249.48) .. controls (410.76,249.6) and (407.86,252.52) .. (406.71,253.9) ;
    \draw [color={rgb, 255:red, 0; green, 0; blue, 0 }  ,draw opacity=1 ][fill={rgb, 255:red, 255; green, 255; blue, 255 }  ,fill opacity=1 ][line width=1.5] [line join = round][line cap = round]   (422.54,259.01) .. controls (409.53,257.52) and (408.82,263.14) .. (399.97,271.78) .. controls (399.91,271.84) and (399.83,271.9) .. (399.72,271.94) .. controls (397.68,272.94) and (387.83,272.14) .. (386.13,277.39) .. controls (382.31,289.21) and (397.92,300.71) .. (406.86,302.88) .. controls (410.06,303.65) and (413.41,308.76) .. (416.79,308.48) .. controls (426.39,307.68) and (429.55,300.75) .. (430.17,292.95) .. controls (430.54,288.31) and (430.01,283.37) .. (429.4,279.22) .. controls (428.26,271.48) and (430.07,264.41) .. (423.53,258.93) ;
    \draw [color={rgb, 255:red, 74; green, 74; blue, 74 }  ,draw opacity=1 ][pattern=_9a8sws3w3,pattern size=6pt,pattern thickness=0.75pt,pattern radius=0pt, pattern color={rgb, 255:red, 245; green, 166; blue, 35}][line width=1.5] [line join = round][line cap = round]   (494.57,128.69) .. controls (486.51,131.6) and (482.83,131.44) .. (474.87,133.67) .. controls (472.64,134.3) and (468.27,131.92) .. (465.64,132.74) .. controls (461.67,133.99) and (456.16,135.11) .. (452.5,138.56) .. controls (444.34,146.24) and (437.93,168.26) .. (436.51,174.2) .. controls (435.97,176.47) and (439.74,176.31) .. (439.39,178.5) .. controls (438.35,185.13) and (437.71,190.35) .. (448.59,186.91) .. controls (468.44,180.63) and (490.82,164.77) .. (498.97,149.5) .. controls (500.81,146.07) and (505.04,141.01) .. (506.12,138.39) .. controls (506.81,136.73) and (507.4,135.05) .. (507.44,133.64) .. controls (507.5,130.99) and (503.23,130.95) .. (503.05,129.89) .. controls (502.86,128.82) and (503.35,127.33) .. (502.53,126.88) .. controls (501.64,126.4) and (497.27,127.71) .. (495.32,128.42) ;
    \draw [color={rgb, 255:red, 0; green, 0; blue, 0 }  ,draw opacity=1 ][pattern=_sxav00h5x,pattern size=6pt,pattern thickness=0.75pt,pattern radius=0pt, pattern color={rgb, 255:red, 245; green, 166; blue, 35}][line width=1.5] [line join = round][line cap = round]   (498.49,139.01) .. controls (493.21,132.59) and (487.32,137.06) .. (474.25,140.83) .. controls (474.15,140.86) and (474.06,140.87) .. (473.96,140.87) .. controls (471.92,140.9) and (467.61,136.32) .. (461.57,140.08) .. controls (447.96,148.54) and (444.73,164.43) .. (447.23,169.8) .. controls (448.12,171.72) and (444.83,177.36) .. (446.86,178.46) .. controls (452.62,181.59) and (461.07,177) .. (469.06,170.67) .. controls (473.81,166.9) and (478.4,162.52) .. (482.16,158.78) .. controls (489.18,151.79) and (497.08,146.55) .. (499.08,139.33) ;
    \draw [color={rgb, 255:red, 128; green, 128; blue, 128 }  ,draw opacity=1 ][fill={rgb, 255:red, 245; green, 166; blue, 35 }  ,fill opacity=0.54 ][line width=1.5] [line join = round][line cap = round]   (480.69,312.79) .. controls (480.69,315.69) and (478.65,317.18) .. (477.78,320.12) .. controls (477.53,320.95) and (472.13,322.96) .. (471.96,323.92) .. controls (471.7,325.37) and (470.51,327.45) .. (473.41,328.54) .. controls (479.88,330.95) and (506.74,331.07) .. (514.14,330.98) .. controls (516.96,330.94) and (518.59,329.46) .. (521.41,329.35) .. controls (529.93,329.03) and (536.74,328.71) .. (537.41,324.74) .. controls (538.62,317.48) and (528.03,310.28) .. (511.23,308.72) .. controls (507.44,308.36) and (502.64,307.23) .. (499.59,307.09) .. controls (497.67,307) and (495.68,306.95) .. (493.77,307.09) .. controls (490.19,307.36) and (488.04,309.07) .. (486.5,309.26) .. controls (484.95,309.45) and (483.16,309.42) .. (482.14,309.8) .. controls (481.05,310.21) and (480.69,311.81) .. (480.69,312.52) ;
    \draw [color={rgb, 255:red, 0; green, 0; blue, 0 }  ,draw opacity=1 ][fill={rgb, 255:red, 255; green, 255; blue, 255 }  ,fill opacity=1 ][line width=1.5] [line join = round][line cap = round]   (496.68,310.07) .. controls (485.34,312.9) and (488.52,314.76) .. (487.23,319.58) .. controls (487.22,319.61) and (487.19,319.65) .. (487.14,319.69) .. controls (486.18,320.5) and (477.81,322.73) .. (479.96,324.74) .. controls (484.79,329.24) and (504.86,328.77) .. (513.41,327.18) .. controls (516.47,326.61) and (522.54,327.3) .. (525.04,326.36) .. controls (532.15,323.71) and (530.04,320.84) .. (525.35,318.34) .. controls (522.55,316.86) and (518.83,315.51) .. (515.59,314.42) .. controls (509.53,312.38) and (506.26,309.8) .. (497.41,309.8) ;
    \draw    (215.54,116.1) -- (259.76,149.48) ;
    \draw [shift={(262.15,151.29)}, rotate = 217.06] [fill={rgb, 255:red, 0; green, 0; blue, 0 }  ][line width=0.08]  [draw opacity=0] (8.93,-4.29) -- (0,0) -- (8.93,4.29) -- cycle    ;
    \draw [shift={(213.15,114.29)}, rotate = 37.06] [fill={rgb, 255:red, 0; green, 0; blue, 0 }  ][line width=0.08]  [draw opacity=0] (8.93,-4.29) -- (0,0) -- (8.93,4.29) -- cycle    ;
    \draw    (472.5,156) -- (473.5,111) -- (501.5,111.93) ;
    \draw [shift={(503.5,112)}, rotate = 181.91] [color={rgb, 255:red, 0; green, 0; blue, 0 }  ][line width=0.75]    (10.93,-3.29) .. controls (6.95,-1.4) and (3.31,-0.3) .. (0,0) .. controls (3.31,0.3) and (6.95,1.4) .. (10.93,3.29)   ;
    \draw [color={rgb, 255:red, 0; green, 0; blue, 0 }  ,draw opacity=1 ][fill={rgb, 255:red, 255; green, 255; blue, 255 }  ,fill opacity=1 ][line width=1.5] [line join = round][line cap = round]   (303.52,275.48) .. controls (310.9,264.66) and (306.26,261.41) .. (302.74,249.56) .. controls (302.71,249.48) and (302.71,249.38) .. (302.71,249.26) .. controls (302.78,246.99) and (308.08,238.65) .. (304.22,234.7) .. controls (295.54,225.81) and (278.1,234.26) .. (272.02,241.16) .. controls (269.84,243.63) and (263.76,244.22) .. (262.43,247.35) .. controls (258.66,256.21) and (263.32,262.24) .. (269.94,266.42) .. controls (273.87,268.9) and (278.49,270.74) .. (282.44,272.13) .. controls (289.83,274.73) and (295.24,279.63) .. (303.13,276.39) ;
    \draw    (206.5,161) -- (205.6,188) ;
    \draw [shift={(205.5,191)}, rotate = 271.91] [fill={rgb, 255:red, 0; green, 0; blue, 0 }  ][line width=0.08]  [draw opacity=0] (8.93,-4.29) -- (0,0) -- (8.93,4.29) -- cycle    ;
    
    \draw (224,111) node [anchor=north west][inner sep=0.75pt]  [font=\footnotesize]  {$c_{A} s_{n}$};
    \draw (194,301) node [anchor=north west][inner sep=0.75pt]  [font=\large]  {$\boldsymbol{\Omega _{n}^{\varepsilon }}$};
    \draw (272.02,241.16) node [anchor=north west][inner sep=0.75pt]    {$B_{j,n}$};
    \draw (505.81,94.26) node [anchor=north west][inner sep=0.75pt]    {$B_{i,n}^{\varepsilon }$};
    \draw (192,196) node [anchor=north west][inner sep=0.75pt]    {$\varepsilon s_{n}$};    
    \end{tikzpicture}
\caption{$\varepsilon s_n$ thickening of the elements $B_{i,n}$}
\label{fig:thickened-sets}
\end{figure}

We note that
$$
\bigcup_{i\in I_n} B_{i,n}^\varepsilon \subset \bigcup_{i\in I_{n+1}} B_{i,{n+1}}^\varepsilon
$$
and
$$
\left(\bigcup_{i\in I_{n+1}} B_{i,{n+1}}^\varepsilon\right)^c = \Omega_{n+1}^\varepsilon \subset \Omega_n^\varepsilon = \left(\bigcup_{i\in I_n} B_{i,n}^\varepsilon\right)^c.
$$
For fixed $n$, the set
\begin{equation*}
            \Omega_{n}^\varepsilon \cap D_n \cap \R = \bigcup \{ I : I \in \widetilde\cI_n \}
\end{equation*}
is a finite union of closed intervals. Indeed, the set $A\cap D_n$ is finite, hence $\{B_{i,n}^\varepsilon: B_{i,n}^{\varepsilon} \cap D_n \ne \emptyset \}$ is a finite union of open balls and $\bigcup_{i \in I_n} B_{i,n}^\varepsilon \cap D_n \cap \R$ is a finite union of open intervals. Moreover, since the width of $D_n$ is $4 c_A^{n+2} = 4c_A^2 s_n$ and there exists an interval $I \in \widetilde\cI_n$ of length at least $s_n(1-2\varepsilon)$, this union is nonempty.
For each $x\in \Omega_{n}^\varepsilon \cap D_n \cap \R$ and $\alpha \in A$, we have
\begin{equation}\label{eqn:min_dist_z_k_alpha}
                |\alpha - x| \ge \varepsilon s_n        
\end{equation}
We now define an admissible subfamily $\cI_n$ of $\widetilde\cI_n$:
        \begin{equation*}
\cI_n := \left\{ I \in \widetilde\cI_n : {\mbox{$\forall\, B^\varepsilon \in \mathcal{B}_n^\varepsilon$, $B^\varepsilon$ is disjoint from the} \atop \mbox{closure of one of the components of $I^c$}} \right\}.
        \end{equation*}
For each $I \in \cI_n$, we have $\ell(I) \ge  s_n -2\varepsilon s_n$.

See Figure \ref{fig:In} for an illustratation of the family $\cI_n$.

\begin{figure}[h]
    \centering
    
    \tikzset{every picture/.style={line width=0.75pt}} 
    
    \begin{tikzpicture}[x=0.75pt,y=0.75pt,yscale=-.6,xscale=.6]
    
    \draw  [dash pattern={on 4.5pt off 4.5pt}]  (47.5,578.88) -- (524.5,577.88) ;
    \draw [shift={(527.5,577.88)}, rotate = 179.88] [fill={rgb, 255:red, 0; green, 0; blue, 0 }  ][line width=0.08]  [draw opacity=0] (8.93,-4.29) -- (0,0) -- (8.93,4.29) -- cycle    ;
    \draw  [draw opacity=0] (75.79,510.63) .. controls (102.48,521.75) and (121.25,548.12) .. (121.25,578.88) .. controls (121.25,608.77) and (103.52,634.52) .. (78.02,646.15) -- (47.5,578.88) -- cycle ; \draw   (75.79,510.63) .. controls (102.48,521.75) and (121.25,548.12) .. (121.25,578.88) .. controls (121.25,608.77) and (103.52,634.52) .. (78.02,646.15) ;  
    \draw  [draw opacity=0] (333.01,443.49) .. controls (395.33,479.28) and (433.31,526.82) .. (433.31,579) .. controls (433.31,627.23) and (400.85,671.51) .. (346.72,706.19) -- (47.5,579) -- cycle ; \draw   (333.01,443.49) .. controls (395.33,479.28) and (433.31,526.82) .. (433.31,579) .. controls (433.31,627.23) and (400.85,671.51) .. (346.72,706.19) ;  
    \draw [color={rgb, 255:red, 128; green, 128; blue, 128 }  ,draw opacity=1 ][fill={rgb, 255:red, 245; green, 166; blue, 35 }  ,fill opacity=0.23 ][line width=0.75] [line join = round][line cap = round]   (169.09,581.72) .. controls (164.59,580.19) and (162.64,578.19) .. (158.24,576.11) .. controls (157,575.53) and (154.82,571.24) .. (153.35,570.63) .. controls (151.15,569.7) and (148.13,567.9) .. (145.95,569.06) .. controls (141.09,571.65) and (136.24,587.66) .. (135.1,592.14) .. controls (134.67,593.85) and (136.69,595.61) .. (136.37,597.35) .. controls (135.38,602.62) and (134.7,606.87) .. (140.75,609.36) .. controls (151.78,613.92) and (164.77,611.38) .. (170.12,602.15) .. controls (171.32,600.07) and (173.91,597.79) .. (174.66,596.05) .. controls (175.13,594.94) and (175.56,593.78) .. (175.67,592.57) .. controls (175.87,590.28) and (173.59,588.09) .. (173.56,587.07) .. controls (173.53,586.04) and (173.89,584.98) .. (173.47,584.17) .. controls (173.03,583.3) and (170.61,582.24) .. (169.52,581.87) ;
    \draw [color={rgb, 255:red, 128; green, 128; blue, 128 }  ,draw opacity=1 ][fill={rgb, 255:red, 255; green, 255; blue, 255 }  ,fill opacity=1 ][line width=0.75] [line join = round][line cap = round]   (170.53,592.73) .. controls (168.12,584.45) and (164.68,585.37) .. (157.44,582.06) .. controls (157.39,582.03) and (157.34,582) .. (157.28,581.95) .. controls (156.19,580.94) and (154.18,574.76) .. (150.7,574.98) .. controls (142.88,575.49) and (140.13,587.75) .. (141.12,593.71) .. controls (141.47,595.84) and (139.35,599.11) .. (140.37,601.1) .. controls (143.25,606.75) and (148.06,607.01) .. (152.75,605.52) .. controls (155.53,604.63) and (158.27,603.12) .. (160.52,601.75) .. controls (164.73,599.2) and (169.29,598.61) .. (170.83,593.31) ;
    \draw [color={rgb, 255:red, 128; green, 128; blue, 128 }  ,draw opacity=1 ][fill={rgb, 255:red, 245; green, 166; blue, 35 }  ,fill opacity=0.24 ][line width=0.75] [line join = round][line cap = round]   (363.6,545.59) .. controls (360.47,549.16) and (357.87,550.2) .. (354.27,553.48) .. controls (353.26,554.4) and (348.46,554.78) .. (347.33,555.9) .. controls (345.64,557.58) and (342.81,559.68) .. (343.05,562.14) .. controls (343.59,567.63) and (356.54,578.22) .. (360.24,580.98) .. controls (361.65,582.04) and (364.06,580.84) .. (365.54,581.8) .. controls (370.04,584.73) and (373.7,586.98) .. (378.31,582.34) .. controls (386.74,573.89) and (389.35,560.91) .. (382.87,552.45) .. controls (381.41,550.54) and (380.29,547.28) .. (378.96,545.92) .. controls (378.12,545.06) and (377.21,544.22) .. (376.13,543.66) .. controls (374.1,542.59) and (371.2,543.87) .. (370.25,543.51) .. controls (369.28,543.14) and (368.44,542.41) .. (367.53,542.48) .. controls (366.56,542.56) and (364.66,544.39) .. (363.89,545.25) ;
    \draw [color={rgb, 255:red, 128; green, 128; blue, 128 }  ,draw opacity=1 ][fill={rgb, 255:red, 255; green, 255; blue, 255 }  ,fill opacity=1 ][line width=0.75] [line join = round][line cap = round]   (374.32,548.47) .. controls (365.75,547.53) and (365.29,551.06) .. (359.46,556.49) .. controls (359.42,556.52) and (359.36,556.56) .. (359.3,556.59) .. controls (357.95,557.21) and (351.46,556.71) .. (350.35,560.01) .. controls (347.83,567.44) and (358.11,574.66) .. (363.99,576.02) .. controls (366.1,576.51) and (368.31,579.72) .. (370.54,579.54) .. controls (376.86,579.04) and (378.94,574.69) .. (379.35,569.79) .. controls (379.59,566.87) and (379.24,563.77) .. (378.84,561.16) .. controls (378.08,556.3) and (379.28,551.86) .. (374.97,548.41) ;
    \draw [color={rgb, 255:red, 128; green, 128; blue, 128 }  ,draw opacity=1 ][fill={rgb, 255:red, 245; green, 166; blue, 35 }  ,fill opacity=0.25 ][line width=0.75] [line join = round][line cap = round]   (326.11,614) .. controls (320.25,616.9) and (317.2,617.75) .. (311.25,620.41) .. controls (309.58,621.16) and (305.38,621.46) .. (303.43,622.37) .. controls (300.5,623.74) and (296.27,625.45) .. (294.16,627.44) .. controls (289.46,631.89) and (289.89,640.5) .. (290.26,642.74) .. controls (290.4,643.6) and (293.45,642.62) .. (293.73,643.41) .. controls (294.59,645.78) and (295.41,647.61) .. (303.44,643.84) .. controls (318.1,636.98) and (332.35,626.44) .. (335.09,619.57) .. controls (335.71,618.02) and (337.88,615.37) .. (338.09,614.27) .. controls (338.22,613.57) and (338.27,612.89) .. (337.94,612.44) .. controls (337.31,611.57) and (333.8,612.6) .. (333.38,612.31) .. controls (332.95,612.01) and (332.96,611.42) .. (332.17,611.48) .. controls (331.32,611.54) and (328.08,613.03) .. (326.66,613.73) ;
    \draw [color={rgb, 255:red, 128; green, 128; blue, 128 }  ,draw opacity=1 ][fill={rgb, 255:red, 255; green, 255; blue, 255 }  ,fill opacity=1 ][line width=0.75] [line join = round][line cap = round]   (331.99,616.34) .. controls (326.01,615.58) and (322.33,618.45) .. (312.58,622.85) .. controls (312.51,622.88) and (312.44,622.91) .. (312.36,622.93) .. controls (310.69,623.44) and (305.98,623.03) .. (301.99,625.71) .. controls (293.02,631.74) and (294.47,637.61) .. (297.91,638.71) .. controls (299.14,639.11) and (297.89,641.71) .. (299.84,641.57) .. controls (305.37,641.16) and (311.12,637.63) .. (316.03,633.65) .. controls (318.96,631.28) and (321.59,628.76) .. (323.7,626.65) .. controls (327.66,622.7) and (332.78,619.09) .. (332.55,616.3) ;
    \draw [color={rgb, 255:red, 155; green, 155; blue, 155 }  ,draw opacity=1 ][fill={rgb, 255:red, 245; green, 166; blue, 35 }  ,fill opacity=0.27 ][line width=1.5] [line join = round][line cap = round]   (189.5,565) .. controls (189.5,558.19) and (197.08,550.56) .. (200.5,546) .. controls (217.61,523.18) and (223.5,521.63) .. (254.5,520) .. controls (259.56,519.73) and (219.09,522.67) .. (262.54,519.57) .. controls (264.93,519.4) and (267.58,519.21) .. (270.5,519) .. controls (274.17,519.24) and (280.5,520) .. (280.5,523) .. controls (285.49,523.83) and (291.5,522) .. (294.5,528) .. controls (295.99,528.75) and (295.92,531.47) .. (297.5,532) .. controls (313.84,537.45) and (319.99,561.07) .. (317.5,581) .. controls (316.22,591.24) and (294.77,589.17) .. (289.5,588) .. controls (286.18,587.26) and (282.5,579) .. (265.5,580) .. controls (276.5,575) and (285.5,566) .. (270.5,570) .. controls (274.11,565.99) and (271.6,563.42) .. (266.6,562.12) .. controls (259.12,560.16) and (246.09,561.01) .. (239.5,564) .. controls (241.5,555) and (230.88,562.72) .. (230.5,563) .. controls (229.7,563.6) and (233.5,564) .. (230.5,566) .. controls (229.62,568.2) and (223.61,574.89) .. (222.5,576) .. controls (221.22,577.28) and (218.62,577.29) .. (216.5,578) .. controls (215.76,578.25) and (211.39,581.93) .. (210.5,582) .. controls (193.88,583.38) and (190.5,576.64) .. (190.5,560) ;
    \draw  [color={rgb, 255:red, 128; green, 128; blue, 128 }  ,draw opacity=1 ][fill={rgb, 255:red, 255; green, 255; blue, 255 }  ,fill opacity=1 ][line width=1.5]  (197.5,570) .. controls (187.5,560) and (220.9,534.64) .. (242.5,528) .. controls (264.1,521.36) and (305.33,531.45) .. (308.5,560) .. controls (311.67,588.55) and (296.23,579.27) .. (283.5,578) .. controls (270.77,576.73) and (298.5,571) .. (284.5,573) .. controls (270.5,575) and (279.51,565.01) .. (276.5,562) .. controls (273.49,558.99) and (255.28,553.38) .. (241.5,552) .. controls (227.72,550.62) and (224.95,558.26) .. (218.5,566) .. controls (212.05,573.74) and (216.44,571.61) .. (214.5,572) .. controls (212.56,572.39) and (198.5,575.19) .. (198.5,568) .. controls (198.5,560.81) and (207.5,580) .. (197.5,570) -- cycle ;
    \draw [color={springgreen}  ,draw opacity=1 ][line width=4]    (122.5,578) -- (138.5,578) ;
    \draw [color={springgreen}  ,draw opacity=1 ][line width=4]    (166.5,578) -- (193.5,578) ;
    \draw [color={springgreen}  ,draw opacity=1 ][line width=4]    (320.5,578) -- (355.5,578) ;
    \draw [color={springgreen}  ,draw opacity=1 ][line width=4]    (384.5,578) -- (433.5,578) ;
    \draw [color={rgb, 255:red, 183; green, 13; blue, 34 }  ,draw opacity=1 ][line width=4]    (222.5,578) -- (264.5,578) ;
    
    \draw (272,539) node [anchor=north west][inner sep=0.75pt]    {$B_{i,n}^{\varepsilon }$};
    \draw (106,490) node [anchor=north west][inner sep=0.75pt]  [font=\large]  {$D_{n}$};
    
    \end{tikzpicture}
    \caption{Elements of an admissible subfamily $\mathcal{I}_n$ shown in green.}
    \label{fig:In}
\end{figure}
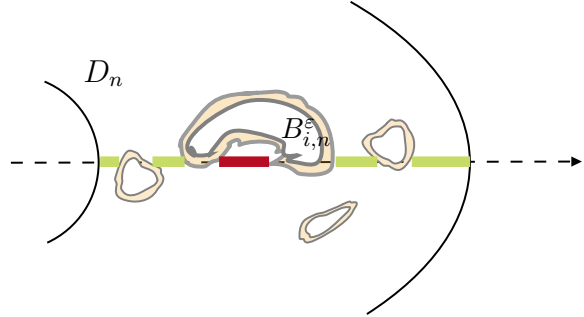

We now assume that the zeroset $A$ has been enumerated, $A = \{\alpha_1,\alpha_2,\alpha_3,\ldots\}$ so that $0<|\alpha_1| \le |\alpha_2| \le |\alpha_3| \le \cdots$.

\begin{lemma}\label{lemma:lower_bound_annulus_dueto_alphaj}
For $A$, $c_A$, $\varepsilon$, $a_d$, $d$ as above, there exists $J_A\in\N$ such that
\begin{equation}\label{eqn:tail_infinite_sum}
                    \sum_{j>J_A+1}^{\infty}\frac{1}{|\alpha_j|}\ \le \frac{\varepsilon d a_d}{16} \cdot \frac{c_A - 1}{c_A^2(c_A^2 + \varepsilon)}.
\end{equation}
Moreover, there exists $N_A\in\N$ such that if $\alpha_j \in D_{\ge N_A}$ then $j>J_A + 1$.
\end{lemma}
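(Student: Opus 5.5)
The first assertion is a direct consequence of the convergence hypothesis \eqref{eq:genus-one}; the second follows because $A$ is discrete and the annuli $D_n$ escape to infinity.

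\emph{First assertion.} Set
\[
\eta := \frac{\varepsilon d a_d}{16}\cdot\frac{c_A-1}{c_A^2(c_A^2+\varepsilon)}.
\]
This is a strictly positive constant. Indeed, $c_A\ge 1$ and we may assume $c_A>1$: enlarging the Nagata constant of $A$ keeps a valid constant, and the annuli in \eqref{eqn:dyadic_annulus} already require $c_A\neq 1$. Also $d\ge 1$, $a_d>0$ and $\varepsilon>0$.

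Since $\sum_j|\alpha_j|^{-1}<\infty$, the tails $T_J:=\sum_{j>J+1}|\alpha_j|^{-1}$ decrease to $0$ as $J\to\infty$. Hence some $J_A\in\N$ satisfies $T_{J_A}\le\eta$, which is \eqref{eqn:tail_infinite_sum}.

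\emph{Second assertion.} Only the finitely many zeros $\alpha_1,\dots,\alpha_{J_A+1}$ matter. Put
\[
M:=\max_{1\le j\le J_A+1}|\alpha_j|<\infty.
\]
The annulus $D_n$ consists of points with $|z|\ge \tfrac{4}{c_A-1}c_A^{n+2}$, and this lower radius tends to $\infty$ as $n\to\infty$ because $c_A>1$. Moreover $D_{\ge N}\subset\{|z|\ge \tfrac{4}{c_A-1}c_A^{N+2}\}$.

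Choose $N_A$ so that $\tfrac{4}{c_A-1}c_A^{N_A+2}>M$. If $\alpha_j\in D_{\ge N_A}$, then $|\alpha_j|>M$, so $j\notin\{1,\dots,J_A+1\}$, that is, $j>J_A+1$.

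Note that the enumeration $|\alpha_1|\le|\alpha_2|\le\cdots$ is not needed for this step, because we bound the maximum over the finite initial set directly. With the monotone ordering one could alternatively take $N_A$ so that the inner radius exceeds $|\alpha_{J_A+1}|$.

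The only point requiring care is confirming $\eta>0$, which amounts to $c_A>1$. I would address it by stating explicitly that the Nagata constant can be taken strictly larger than $1$. Everything else is routine.
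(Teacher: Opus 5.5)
Your proof is correct and follows the same route as the paper. The tail bound comes from the convergence of $\sum_j|\alpha_j|^{-1}$, and $N_A$ is chosen so that the inner radius of $D_{N_A}$ exceeds the moduli of the first $J_A+1$ zeros. Your remarks that $c_A$ may be taken strictly greater than $1$ (so the right-hand side of \eqref{eqn:tail_infinite_sum} is positive) and that the monotone enumeration is not needed are valid refinements of the paper's two-line argument.
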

    
\begin{proof}
Since $\sum_j |\alpha_j|^{-1}$ converges, there exists $J_A$ so that \eqref{eqn:tail_infinite_sum} holds. Due to how the values $\alpha_j$ are enumerated, there exists $N_A$ so that $j>J_A + 1$ whenever $\alpha_j \in D_{\ge N_A}$.
\end{proof}
    
\paragraph{\bf Construction of the set $X$.}
Fix 
\begin{equation}\label{eq:choice-of-delta}
0 < \delta < c_A.
\end{equation}
Let $R_0$ be as in Lemma \ref{lemma:radial-growth} and $N_A$ be as in Lemma \ref{lemma:lower_bound_annulus_dueto_alphaj}. Fix $n_q \ge 1$ so that $R_0 \in D_{n_q}$. We first construct a set $\tilde X \subset \C$ as follows:
\begin{equation*}
\tilde X \subset \bigcup_{n > \max\{n_q, N_A\}} \bigcup_{I \in \mathcal{I}_n}  I \subset \Omega_n^\varepsilon
\end{equation*}
and
\begin{equation*}
\tilde X \cap D_n := \left\{ (\inf I) + \varepsilon s_n + m\delta : I \in \mathcal{I}_n, m = 1, \ldots, \left\lfloor \tfrac{\ell(I)-\varepsilon s_n}{\delta} \right\rfloor\right\}
\end{equation*}
for $n<\max\{n_q,N_A\}$.

Enumerate $\tilde X = \{z_1<z_2<z_3<\cdots\}$ in increasing order.

\begin{proposition}\label{prop:ratio_consecutive_terms_tildeX}
There exists $K \in \N$ and $\Lambda>1$ so that for all $k \ge K$, we have
$$
\left|\frac{f(z_{k+1})}{f(z_k)}\right| \ge \Lambda\,.
$$
\end{proposition}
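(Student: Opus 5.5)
We compare $\log|f(z_{k+1})|$ and $\log|f(z_k)|$ factor by factor, writing $f=F$:
\[
\log\Bigl|\frac{f(z_{k+1})}{f(z_k)}\Bigr| = m\log\frac{z_{k+1}}{z_k} + \Re\bigl(q(z_{k+1})-q(z_k)\bigr) + \sum_{j}\log\Bigl|\frac{1-z_{k+1}/\alpha_j}{1-z_k/\alpha_j}\Bigr| .
\]
All $z_k$ are positive reals larger than $R_0$ (since $n>n_q$), so the first term is nonnegative. By Lemma \ref{lemma:radial-growth} with $h=z_{k+1}-z_k$, the second term is at least $c_0 z_k^{d-1}h$, where $c_0=\tfrac12 d a_d$.

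The key quantitative input is a lower bound on the gap $h$. Consecutive points inside one interval $I$ differ by exactly $\delta$. Consecutive points in different intervals, or in different annuli, differ by at least $\varepsilon s_n$ (the offset from $\inf I$) and in any case by at least $\delta$. So $h\ge\delta$ always. This alone gives growth $c_0 z_k^{d-1}\delta$, which is bounded below by a positive constant since $d\ge1$. Thus the polynomial factor gives a ratio at least $e^{c_0\delta}>1$ as long as the product term does not destroy it.

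The main obstacle is controlling the product term. I would split the zeros into the finitely many $\alpha_j$ with $j\le J_A+1$ and the tail.

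For the head zeros, $|1-z/\alpha_j|$ is comparable to $z/|\alpha_j|$ for $z$ large, so each such log-ratio is $\log(z_{k+1}/z_k)+o(1)$. This quantity is either nonnegative or tends to $0$ as $k\to\infty$.

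For the tail, I would use $|\log|1-w|-\log|1-w'||\le |w-w'|/\min(|1-w|,|1-w'|)$ with $w=z/\alpha_j$. This bounds the $j$-th term by $h/\min(|\alpha_j-z_k|,|\alpha_j-z_{k+1}|)$. Here \eqref{eqn:min_dist_z_k_alpha} gives distance at least $\varepsilon s_n$ for zeros near $D_n$, while zeros far away have distance comparable to $\max(|\alpha_j|,z)$.

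The hope is to show that the tail sum is at most $\tfrac12 c_0 h$ times a power of $z_k$. I would split the tail sum into zeros with $|\alpha_j|\ge 2z_k$, zeros with $|\alpha_j|\le z_k/2$, and zeros comparable to $z_k$.

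For the first two groups, the term is bounded by $\lesssim h\sum_j|\alpha_j|^{-1}$ and $\lesssim h\,m_{z_k}(A)/z_k$ respectively. These are small by \eqref{eqn:tail_infinite_sum} and by Lemma \ref{lemma:sublinear_growth_zeroes}.

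For the comparable zeros, the lower bound $\varepsilon s_n\approx \varepsilon(c_A-1)z/(4c_A^3)$ and the count from Lemma \ref{lemma:sublinear_growth_zeroes} give a bound $h\cdot o(z)/(\varepsilon z)=o(h)$. The precise constant in \eqref{eqn:tail_infinite_sum}, involving $\varepsilon d a_d (c_A-1)/c_A^2$, seems designed exactly so that this comparison closes against $c_0$.

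A delicate point is when $h$ is large, i.e.\ a jump between intervals. There the zeros between $z_k$ and $z_{k+1}$ can decrease $|f|$. I would handle this by integrating $\frac{d}{dx}\log|f(x)|$ along $[z_k,z_{k+1}]$ only where legitimate, or more simply by using the admissibility condition defining $\mathcal I_n$ together with the $\varepsilon s_n$ separation, so that the estimate applies at both endpoints. If necessary, I would prune $\tilde X$ as the sketch indicates.

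Combining the three parts gives $\log|f(z_{k+1})/f(z_k)|\ge \tfrac12 c_0\delta-o(1)$. Taking $K$ large then yields the statement with $\Lambda=e^{c_0\delta/4}$.
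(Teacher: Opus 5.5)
Your argument is correct, and it reaches the result by a genuinely different organization from the paper's.

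\textbf{How the paper argues.} It splits by gap size. In Case I ($h=\delta$) it bounds each factor with $j\le J(k)+1$ below by $1-\delta/(\varepsilon s_{n(k)})$, uses $J(k)=o(s_{n(k)})$, and treats the remaining zeros via the convergent tail. In Case II ($h\ge\varepsilon s_n$) it bounds each near factor below by a fixed constant, so the near zeros cost at most $e^{-\vartheta(J(k)+1)}$, which the gain $e^{c_0\varepsilon s_n}$ beats. There the far zeros are absorbed using the specifically tuned constant in \eqref{eqn:tail_infinite_sum}.

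\textbf{What your route buys.} Your endpoint estimate $\bigl|\log|\alpha_j-z_{k+1}|-\log|\alpha_j-z_k|\bigr|\le h/\min(|\alpha_j-z_k|,|\alpha_j-z_{k+1}|)$ gives a single inequality, $\log|f(z_{k+1})/f(z_k)|\ge (c_0-o(1))h-o(1)$, for every gap. So the case split disappears, and you only need that the tail of $\sum_j|\alpha_j|^{-1}$ tends to zero, not the precise constant. The ingredients are the same as the paper's: the $\varepsilon s_n$ separation, counting via Lemma \ref{lemma:sublinear_growth_zeroes}, the convergent tail, and radial growth. The paper's Case II is cruder but more hands-on, needing only a constant lower bound per near factor. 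Your separate treatment of the head zeros $j\le J_A+1$ is harmless but redundant, since they eventually lie in your group $|\alpha_j|\le z_k/2$.

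\textbf{The large-gap paragraph needs tidying, not new ideas.} In your framework large gaps are not actually delicate.
\begin{itemize}
\item Your inequality only sees the distances from $A$ at the two endpoints. Every point of $\tilde X$ is at distance $>\varepsilon s_n$ from $A$ by construction, so zeros lying between $z_k$ and $z_{k+1}$ cause no trouble.
\item Integrating $\frac{d}{dx}\log|f|$ along the segment (which may pass near zeros), invoking admissibility, and pruning beyond discarding finitely many initial points are all unnecessary.
\item What you do need is $z_{k+1}\le Cz_k$. This comes from the upper bounds $h\le(2c_A^2+2\varepsilon)s_n$ in \eqref{case-2}--\eqref{case-3}.
\item Define the outer group as $|\alpha_j|\ge 2z_{k+1}$ rather than $|\alpha_j|\ge 2z_k$. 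As written, zeros near $z_{k+1}$ land in your first group, where $\min(|\alpha_j-z_k|,|\alpha_j-z_{k+1}|)\gtrsim|\alpha_j|$ fails.
\item The comparable group is then $z_k/2<|\alpha_j|<2z_{k+1}$. It still has $m_{2z_{k+1}}(A)=o(s_n)$ members, each contributing at most $h/(\varepsilon s_n)$, so its total is $o(h)$ for every gap.
\end{itemize}
With these adjustments your conclusion $\log|f(z_{k+1})/f(z_k)|\ge\tfrac12 c_0\delta-o(1)$ holds as claimed.
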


We defer the proof of Proposition \ref{prop:ratio_consecutive_terms_tildeX} momentarily. We define
\begin{equation}\label{eq:X}
X = \tilde{X} \cap D_{\ge K}
\end{equation}
and show that this set $X$ verifies the conclusion in Theorem~\ref{thm:entire_genus_1_decrease_ndim}.

\begin{proof}[Proof of Theorem~\ref{thm:entire_genus_1_decrease_ndim}]

Let $X$ be as in \eqref{eq:X}, where $K$ is as in Proposition \ref{prop:ratio_consecutive_terms_tildeX}. By Corollary \ref{cor:growing_arithmetic_patches}, $\dim_N(X) = 1$. On the other hand, Proposition \ref{prop:ratio_consecutive_terms_tildeX} and Lemma \ref{lemma:ratio>1_ndim_0} together imply that $\dim_N(f(X)) = 0$. This completes the proof.
\end{proof}

It remains to prove Proposition \ref{prop:ratio_consecutive_terms_tildeX}. First, we observe that exactly one of the following holds true for each pair of consecutive terms $z_k$, $z_{k+1}$ selected from $\tilde X$:
\begin{itemize}
\item If $z_k$ and $z_{k+1}$ lie in a common interval $I \in \cI_n$, then
\begin{equation}\label{case-1}
 |z_{k+1}-z_k|=\delta;
\end{equation}
\item If $z_k$ and $z_{k+1}$ lie in distinct intervals in the same annulus $D_n$, then
\begin{equation}\label{case-2}
 \varepsilon s_n \le |z_{k+1}-z_k|\le (c_A+2\varepsilon)s_n;
\end{equation}
\item If $z_k \in D_n$ and $z_{k+1} \in D_{n+1}$, then
\begin{equation}\label{case-3}
\varepsilon s_n \le |z_{k+1}-z_k|\le (2c_A^2+2\varepsilon)s_n.
\end{equation}
\end{itemize}

For fixed $k \in \N$, let $n(k) \in \N$ be such that $z_k \in D_{n(k)}$. Observe that
\begin{equation*}
            \left| \frac{f(z_{k+1})}{f(z_k)} \right| = \left|\frac{z_{k+1}^m}{z_k^m} \right| \cdot\left| \frac{e^{q(z_{k+1})}}{e^{q(z_k)}} \right| \cdot\left| \frac{P(z_{k+1})}{P(z_k)} \right| \ge E(z_k) Q(z_k)
\end{equation*}
where $E(z_k) : = e^{\Re(q({z_{k+1}}) - {q(z_k)})}$ and $Q(z_k) := \left| \frac{P(z_{k+1})}{P(z_k)}\right|$.

We estimate
\begin{equation}\begin{split}\label{eq:Q-z-k}
        Q(z_k) = \left| \frac{P(z_{k+1})}{P(z_k)}\right| 
&= \prod_{j=1}^{\infty} \left| \frac{\alpha_j - z_{k+1}}{\alpha_j - z_k} \right| \\
&\ge
    \underbrace{\prod_{j\le J(k)+1}
    \Bigl|\tfrac{\alpha_j - z_{k+1}}{\alpha_j - z_k}\Bigr|}_{\textstyle Q_1(k)}
    \cdot
    \underbrace{\prod_{j>J(k)+1}
    \Bigl(1 - \tfrac{|z_{k+1}-z_k|}{|\alpha_j-z_k|}\Bigr)}_{\textstyle Q_2(k)}.
\end{split}\end{equation}
where
\begin{equation}\label{eq:Jk}
J(k) := \inf\bigl\{j\in\N:\, \alpha_j\in D_{\ge n(k)+2}\bigr\}.
\end{equation}
Note that $J(k)$ is finite since $\sum_j|\alpha_j|^{-1}<\infty$. Moreover,  by Lemma \ref{lemma:sublinear_growth_zeroes}
    \begin{equation}\label{eqn:J(k)_ratio_cn}
             \lim_{k\to\infty}\frac{J(k)}{s_{n(k)}}=0.
    \end{equation}
We now prove that
\begin{equation}\label{eq:alpha-j-z-k}
|\alpha_j - z_k| \ge  (c_A^{-1} - c_A^{-2}) |\alpha_j|
\end{equation}
when $j \ge J(k)$. To see why \eqref{eq:alpha-j-z-k} holds true, observe that $\alpha_j \in D_{n(k)+\ell}$ for some $\ell \ge 2$, while $z_k \in D_{n(k)}$. 
Setting $\eta = \frac{4}{c_A-1}$, we have $\eta c_A^{n(k)+ \ell +2} \le |\alpha_j| \le \eta c_A^{n(k)+\ell+3}$ and $|z_k| \le \eta c_A^{n(k)+3}$. Hence
\begin{equation*}\begin{split}
|\alpha_j - z_k| &\ge \eta c_A^{n(k)+\ell+2} - \eta c_A^{n(k)+3} = \eta c_A^{n(k)+\ell +3} (c_A^{-1} - c_A^{-\ell}) \\
                       &\ge \eta c_A^{n(k)+\ell +3} (c_A^{-1} - c_A^{-2}) \\
                       &\ge |\alpha_j| \; (c_A^{-1} - c_A^{-2}).
\end{split}\end{equation*}
            
We now fix an index $k$ and consider several cases according to whether \eqref{case-1}, \eqref{case-2}, or \eqref{case-3} is valid.

\medskip

\paragraph{\textbf{Case I}} Assume that \eqref{case-1} is valid. Then $|z_k - z_{k+1}| = \delta$. By Lemma \ref{lemma:radial-growth} and the construction of $\Bar{X}$, $E(z_k) \ge e^{c_0 s_{n(k)}^{d-1}\delta}  \ge e^{c_0\delta}$ for $c_0 = \frac{1}{2}da_d$. Recalling the definition of $J(k)$ in \eqref{eq:Jk}, we observe that $|\alpha_j-z_k|\ge \varepsilon s_{n(k)}$ for all $j\le J(k)$. Hence
\[
\left| \frac{\alpha_j - z_{k+1}}{\alpha_j - z_k} \right| \ge 1 - \frac{|z_{k+1}-z_k|}{|z_k-\alpha_j|} \ge 1-\frac{\delta}{|\alpha_j-z_k|}\; \ge\; 1-\frac{\delta}{\varepsilon s_{n(k)}}\ =\ 1-\frac{\delta'}{c_A^{\,n(k)}},
\]
where $\delta' = \delta/\varepsilon$. Therefore
$$
Q_1(k) \ge \Bigl(1-\frac{\delta'}{c_A^{n(k)}}\Bigr)^{J(k)}.
$$         
where $Q_1(k)$ is as in \eqref{eq:Q-z-k}. Since $J(k)/c_A^{n(k)}\to0$ by~\eqref{eqn:J(k)_ratio_cn}, it follows that $Q_1(k)\longrightarrow 1$. Hence, for any $0<\nu\ll1$, there exists $N_1\in\N$ such that $Q_1(k) \ge 1-\nu$ for all $n(k) \ge N_1$. Define 
$$
K_1 := \inf\{k\in \N : z_k \in D_{\ge N_1}\}.
$$
Next, set $\lambda := c_A^{-1} - c_A^{-2}$. By \eqref{eq:alpha-j-z-k}, $|\alpha_j - z_k| \ge \lambda\,|\alpha_j|$ for all $j \ge J(k)$. Hence
$$
         \frac{\delta}{|\alpha_j- z_k|} \le \frac{\delta\lambda^{-1}}{|\alpha_j|}
$$
where $\delta>0$ is as in \eqref{eq:choice-of-delta}. Thus
$$
1 - \frac{|z_{k+1}-z_k|}{|\alpha_j-z_k|} \ge 1- \frac{\delta}{|\alpha_j- z_k|} \ge 1- \frac{\delta \lambda^{-1}}{|\alpha_j|}.
$$
Hence
$$
Q_2(k) \ge \prod_{j > J(k)+1} \left( 1- \frac{\delta \lambda^{-1}}{|\alpha_j|} \right)
$$
where $Q_2(k)$ is as in \eqref{eq:Q-z-k}. Since $\sum_j |\alpha_j|^{-1} < \infty$, the product on the right converges to one as $k\to\infty$. Thus, for every $0<\mu\ll1$, there exists $N_2 \in \N$ such that $Q_2(k) \ge 1-\mu$ for all $n(k) \ge N_2$. Define 
$$
K_2 := \inf\{k\in \N : z_k \in D_{\ge N_2}\}.
$$
We now choose small parameters $0<\mu,\nu\ll1$ and $\Lambda_I > 1$ so that $e^{c_0\delta} (1-\nu)(1-\mu) \ge \Lambda_I$. Then, letting 
$$
K_{I} := \max\{K_1, K_2\}\in \N
$$ 
we obtain for all $k\ge K_{I}$ that 
$$
\left| \frac{f(z_{k+1})}{f(z_{k})}\right| = E(z_k) Q(z_k) \ge \Lambda_I > 1.
$$ 
        
\paragraph{\textbf{Case II}} Now assume that either \eqref{case-2} or \eqref{case-3} is valid. Then $\varepsilon s_n \le |z_k - z_{k+1}| \le  (2c_A^2 + 2\varepsilon)s_n$. This case happens only when there exists $\alpha \in A$ so that $B(\alpha,\varepsilon)$ meets $[z_k,z_{k+1}]$. The latter fact implies that
$\varepsilon s_n \le |z_k - \alpha_j| \le (3c_A^2)s_n$ for all $\alpha_j \in  D_{\le n(k)+1}$. By Lemma~\ref{lemma:radial-growth},
        \begin{equation}\label{eqn:case2_e_z_k_bound}
            E(z_k) \ge e^{c_0 s_n^{d-1}(\varepsilon s_n)}  \ge e^{c_0\varepsilon s_n}.
        \end{equation}
Moreover, 
\begin{equation}\label{eqn:case2_Q1}
           Q_1(k)  = \prod_{j=1}^{J(k)+1}
            \Bigl|\frac{\alpha_j - z_{k+1}}{\alpha_j - z_k}\Bigr|
            \ge
            \prod_{j=1}^{J(k)+1}
            \frac{\varepsilon s_n}{3c_A^2s_n}
            = e^{-\vartheta (J(k)+1)},
       \end{equation}    
where $\vartheta := \log((3c_A^2)/(\varepsilon)) > 0$.

Next, using again \eqref{eq:alpha-j-z-k} we estimate
$$
\frac{|z_{k+1} -z_k|}{|\alpha_j- z_k|} \le \frac{|z_{k+1} -z_k|}{\lambda |\alpha_j|} \le \frac{(2c_A^2 + 2\varepsilon)s_n}{\lambda |\alpha_j|}
$$
and so
$$
\prod_{j > J(k)+1}^{\infty}1- \frac{|z_{k+1} -z_k|}{|\alpha_j- z_k|} \ge \prod_{j > J(k)+1}^{\infty}1- \frac{(2c_A^2 + 2\varepsilon)s_n}{\lambda |\alpha_j|} = \prod_{j > J(k)+1}^{\infty}1- \frac{\gamma s_n}{|\alpha_j|}  
$$
where $\gamma = \tfrac{2c_A^2+2\varepsilon}{\lambda}$. Observe that
$$
\frac{\gamma s_n}{|\alpha_j|} = \frac{(2c_A^2+2\varepsilon)s_n}{\lambda|\alpha_j|} = \frac{c_A^2 \, (2c_A^2+2\varepsilon)s_n}{(c_A-1)|\alpha_j|} \le \frac12
$$
since
$$
\frac{s_n}{|\alpha_j|} \le \frac{c_A-1}{4c_A^4}.
$$
The series $\sum_{j > J(k)+1}\log\Bigl(1-\frac{\gamma s_n}{|\alpha_j|}\Bigr)$ and $\sum_{j > J(k)+1} \frac{\gamma s_n}{|\alpha_j|}$ converge or diverge simultaneously, and we obtain
\begin{equation}\label{eqn:case2_Q2}
        \prod_{j > J(k)+1}^{\infty}1- \frac{|z_{k+1} -z_k|}{|\alpha_j- z_k|} \ge \exp \left( - 2\gamma s_n \sum_{j > J(k)+1} |\alpha_j|^{-1} \right).
\end{equation}
Combining \eqref{eqn:case2_e_z_k_bound}, \eqref{eqn:case2_Q1}, and \eqref{eqn:case2_Q2}, we conclude that
\begin{equation*}
            \left| \frac{f(z_{k+1})}{f(z_{k})}\right| \ge \exp \left( c_0 \varepsilon s_n - \vartheta (J(k) +1 ) - 2\gamma s_n \sum_{j > J(k)+1} |\alpha_j|^{-1} \right).
\end{equation*}
However, by Lemma \ref{lemma:lower_bound_annulus_dueto_alphaj}, 
$$
c_0 \varepsilon s_n - 2 \gamma s_n \sum_{j > J(k)+1} |\alpha_j|^{-1} \ge c_0 \frac{\varepsilon}{2}s_n,
$$
and hence 
\begin{equation*}
            \left| \frac{f(z_{k+1})}{f(z_{k})}\right| \ge \exp \left( c_0 \tfrac{\varepsilon}{2} s_{n(k)} - \vartheta J(k) \right).   
\end{equation*}
By \eqref{eqn:J(k)_ratio_cn},
\begin{equation*}
               \lim_{k\to \infty} \left| \frac{f(z_{k+1})}{f(z_{k})}\right| = \infty
\end{equation*}
Thus there exists $K_{II}$ and $\Lambda_{II} > 1$ such that $\left| \frac{f(z_{k+1})}{f(z_{k})}\right| \ge \Lambda_{II}$ for all $k > K_{II}$.

Finally, setting $K := \max \{ K_I, K_{II} \}$ and $\Lambda := \min \{ \Lambda_I, \Lambda_{II} \}$,
we conclude that $\left| \frac{f(z_{k+1})}{f(z_{k})}\right| \ge \Lambda > 1$ for all $k \ge K$. This completes the proof of Proposition \ref{prop:ratio_consecutive_terms_tildeX}. 

\section{Porosity and conformal mappings}\label{sec:porosity}

Following \cite{Vaisala1987Porous}, we say that a subset $E$ of a metric space $(X,d)$ is {\it porous in $X$} if there exists a constant $c>0$ so that for every $x \in X$ and every $r>0$ there exists $z \in B(x,r)$ with $B(z,cr) \cap E = \emptyset$. The connection between porosity and Nagata dimension is recalled in the following theorem.

\begin{theorem}\label{th:equivalence}
For a set $E \subset \R^n$, the following are equivalent:
\begin{itemize}
\item[(a)] $E$ is porous in $\R^n$,
\item[(b)] $\dim_A E < n$,
\item[(c)] $\dim_N E \le n-1$.
\end{itemize}
\end{theorem}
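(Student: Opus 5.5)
We will prove (a)$\Leftrightarrow$(b) and (b)$\Rightarrow$(c)$\Rightarrow$(a). The equivalence (a)$\Leftrightarrow$(b) is Luukkainen's theorem \cite{luu:assouad}, so we simply quote it. We also quote \eqref{eq:n-to-a}. If $\dim_A E<n$, then $\dim_N E \le \dim_A E < n$. Since the Nagata dimension is an integer, this gives $\dim_N E\le n-1$, which is (b)$\Rightarrow$(c).

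The substantive implication is (c)$\Rightarrow$(a), which we prove by contraposition. Suppose $E$ is not porous. Then for every $k\in\N$ there is a ball $B(x_k,r_k)$ in which every point lies within distance $r_k/k$ of $E$. The defining conditions of Nagata dimension are invariant under similarities. Rescaling by $x\mapsto (x-x_k)/r_k$, we obtain sets $E_k$ that are $1/k$-dense in the unit ball $B(0,1)$, and $\dim_N E_k\le \dim_N E$ with the same constant $c$.

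Next we pass to a limit, which is where the uniformity of the Nagata constant is used. Take a subsequential Hausdorff limit of $\overline{E_k}\cap \overline{B}(0,1)$; it equals $\overline{B}(0,1)$. We then argue that the property ``for all $s>0$ there is a $cs$-bounded cover with $s$-multiplicity $\le n$'' survives Hausdorff limits, up to worsening the constants by a fixed factor. One way to see this is to use the equivalent formulation with scales $s$ bounded below relative to the Hausdorff distance, and to use Theorem~\ref{thm:completion_invariance_ndim} for closures. This would give $\dim_N \overline{B}(0,1)\le n-1$.

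That conclusion contradicts the fact that a Euclidean ball has Nagata dimension $n$. Indeed, its topological dimension is $n$, and the topological dimension is at most the Nagata dimension for compact spaces, by Lebesgue's covering theorem applied at a single small scale.

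A cleaner route avoids limits and argues directly at one scale. Fix $s = 1$ in the rescaled picture, with $k \gg c$. A $c$-bounded cover of $E_k$ with $1$-multiplicity $\le n$ can be thickened by $1/k$ to a cover of $B(0,1)$. Here we choose the rescaling radius $r_k$ as a large multiple of $c$ times the scale, so that the ball is big compared to the cover. The thickened cover has $1/2$-multiplicity $\le n$ and sets of diameter $\le c+2/k$. This contradicts the Lebesgue covering theorem: any cover of a cube of side $L\gg c$ by sets of diameter $\le 2c$ has a point meeting $n+1$ members, and even a set of small diameter meeting $n+1$ members.

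The main obstacle is the bookkeeping in this thickening step. We must verify that enlarging each cover element by $1/k$ keeps the multiplicity bounded at a fixed scale. This holds because a set of diameter $1/2$ meets the enlargement of a member $B$ only if its $1/k$-neighbourhood meets $B$, and for $k$ large that neighbourhood has diameter $\le 1$. We must also make sure the Lebesgue-type lemma is applied to a cube whose side is large relative to the diameter bound $2c$. We would cite the standard quantitative Lebesgue covering lemma for this.
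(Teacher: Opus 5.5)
For (a)$\Leftrightarrow$(b) and (b)$\Rightarrow$(c) you do exactly what the paper does: quote Luukkainen, then use $\dim_N\le\dim_A$ together with integrality. The routes differ only in (c)$\Rightarrow$(a). The paper gives no argument there; it attributes the implication to Lang via \cite[Proposition 2.3]{SormaniWenger2010}. You prove it directly, and your one-scale thickening argument with Lebesgue's covering theorem is correct and self-contained. The Hausdorff-limit route is vaguer and unnecessary, so drop it. What your route buys is an elementary, quantitative statement: if $c$ is the Nagata constant, then $E$ is $\rho$-porous with $\rho\approx (c\sqrt n)^{-1}$, and each ball uses the Nagata condition at only one scale. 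The citation buys brevity.

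Three pieces of bookkeeping need cleaning up.

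\textbf{Normalization.} As written, with $s=1$ on $B(0,1)$ the cover sets have diameter up to $c\ge1$ and can swallow the ball. You need the ball large compared to $cs$ and the density small compared to $s$, and these two requirements are independent. You do not need sequences for this. Keep $B(x,r)$, put $s=r/(2(c+1)\sqrt n)$, and apply non-porosity with $\rho=s/(4r)$. Then every point of $B(x,r)$ lies within $s/4$ of $E$. The sets $N_{s/4}(B_i)$ cover the cube $Q$ of side $r/\sqrt n$ centred at $x$, and they have diameter $\le cs+s/2<r/\sqrt n$.

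\textbf{Multiplicity check.} The test set must be a subset of $E$. Suppose $S$ has diameter $\le s/2$ and meets $N_{s/4}(B_1),\dots,N_{s/4}(B_{n+1})$. Pick $b_i\in B_i$ within $s/4$ of $S$. Then $\{b_1,\dots,b_{n+1}\}\subset E$ has diameter $\le s$, which contradicts $s$-multiplicity $\le n$. So the thickened family has $s/2$-multiplicity $\le n$.

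\textbf{The Lebesgue step.} For arbitrary sets you do not get a point lying in $n+1$ members: a partition of a cube into half-open subcubes has order one. Only the small-set version holds, so pass to closures. The multiplicity bound and compactness of $Q$ imply that only finitely many thickened sets meet $Q$. Their closures cover $Q$, and since their diameters are less than the side length, none meets two opposite faces. Lebesgue's theorem then gives a point in $n+1$ closures. A tiny ball around that point meets $n+1$ thickened sets, which is the contradiction.
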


The equivalence of (a) and (b) was proved by Luukkainen \cite[Theorem 5.2]{luu:assouad}, while the implication (b) $\Rightarrow$ (c) follows from the fact that Nagata dimension is always bounded above by Assouad dimension \cite[Theorem 1.1]{DonneRajala2015}. Finally, the implication (c) $\Rightarrow$ (a) is due to Urs Lang; see \cite[Proposition 2.3]{SormaniWenger2010}.

This section is devoted to the proof of Theorem \ref{thm:spiral_domain_porosity_change}. The domain $\Omega$ will be constructed as a thickening of the polynomial spiral\footnote{The idea to use polynomial spiralling domains in this context was suggested to the authors by Sylvester Eriksson-Bique, and we are grateful for his permission to include the example here.}
\begin{equation}\label{eq:s-p}
S_p := \{ t^{-p} e^{\bi \pi t}:t \ge 1 \}, \quad p>0.
\end{equation}
Fraser \cite{fr:spiral} showed that the spirals $S_p$ have Assouad dimension equal to two for each $p>0$. By Theorem \ref{th:equivalence}, such sets also have Nagata dimension equal to two and are not porous in $\C$.

The argument which we will give is valid for any simply connected domain $\Omega \supset S_p$ with Jordan boundary so that the endpoints of $S_p$ lie in $\partial\Omega$. For simplicity, we work with the domain
\begin{equation}\label{eq:omega-p}
\Omega_p := \{ t^{-p} e^{\bi \pi t} e^{\bi \theta} : t > 1, |\theta|<\eps \}
\end{equation}
for some $0<\eps<\tfrac\pi2$. Note that $\overline{S_p} \subset \overline{\Omega_p}$ and that $\Omega_p$ is simply connected with Jordan boundary. See Figure \ref{fig:spiral-domain}.

\begin{figure}[h]
\includegraphics[width=3in]{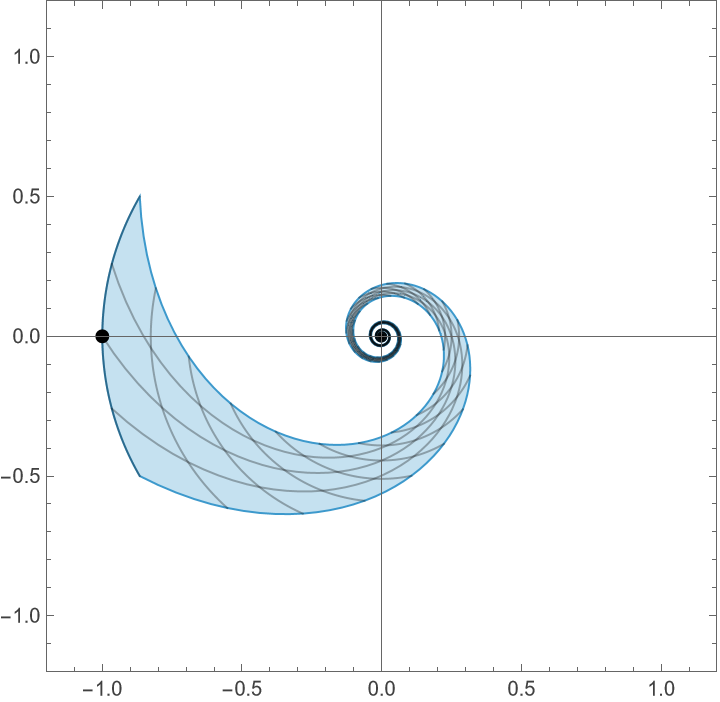}
\caption{The spiral domain $\Omega_p$}\label{fig:spiral-domain}
\end{figure}

\begin{proof}[Proof of Theorem \ref{thm:spiral_domain_porosity_change}]
Let $\Omega_p$ be as in \eqref{eq:omega-p} and let $f:\D \to \Omega_p$ be a Riemann map with continuous extension $f:\overline{\D} \to \overline{\Omega_p}$. By precomposing with an automorphism of $\D$ if necessary, we may ensure that $f(-1) = -1 \in \partial\Omega_p$ and $f(1) = 0 \in \partial\Omega_p$. Let $E = f([-1,1])$. The set $E$ is a closed and connected subset of $\overline{\Omega_p}$ with $-1,0 \in E$. Lemma \ref{lem:helper} below implies that $E$ is not porous in $\C$, which completes the proof.
\end{proof}

\begin{lemma}\label{lem:helper}
Let $E \subset \overline{\Omega_p}$ be a continuum with $-1,0 \in E$. Then $E$ is not a porous subset of $\C$.
\end{lemma}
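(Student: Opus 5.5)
The plan is to show that $E$ meets every \emph{radial} cross-cut of the spiral strip $\Omega_p$. Since consecutive turns of the spiral near $0$ are much closer together than their distance to $0$, this forces $E$ to meet every ball of radius $cr$ inside $B(0,r)$ once $r$ is small. That contradicts porosity with constant $c$ at the point $0\in E$.

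\emph{Step 1 (radial cross-cuts).} In polar coordinates, a point $\rho e^{\bi\varphi}$ lies in $\Omega_p$ exactly when $\rho=t^{-p}$ and $\varphi\in \pi t+(-\eps,\eps)$ modulo $2\pi$ for some $t>1$. Fix a direction $\varphi$. For each admissible $t_k=t_0(\varphi)+2k$ with $\pi t_0\equiv\varphi$, the ray $\{\rho e^{\bi\varphi}\}$ meets $\overline{\Omega_p}$ in a closed radial segment $\sigma_k(\varphi)$. This segment sits at radii $\rho\in[(t_k+\eps/\pi)^{-p},(t_k-\eps/\pi)^{-p}]$ and joins the two boundary arcs $\{\theta=\pm\eps\}$ of $\Omega_p$. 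Because $\eps<\pi/2$, the turns do not overlap, so $\sigma_k(\varphi)$ is a cross-cut of the Jordan domain $\Omega_p$. It separates $\overline{\Omega_p}$ into a piece containing the outer end $-1$ (at $t=1$) and a piece accumulating at $0$. To prove this, I would pull back by the homeomorphism $(t,\theta)\mapsto t^{-p}e^{\bi\pi t}e^{\bi\theta}$ from the strip $[1,\infty)\times[-\eps,\eps]$, under which $\sigma_k$ becomes a curve crossing the strip. Since $E$ is a continuum in $\overline{\Omega_p}$ containing $-1$ and $0$, a standard connectedness argument in the strip picture shows that $E\cap\sigma_k(\varphi)\neq\emptyset$ for every $\varphi$ and every $k$ large enough. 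This holds only for $t_k>1$, which is harmless because we work near $0$. This separation step is the one needing the most care, in particular the endpoint $0$ being the "end at infinity" of the strip. I expect it to be the main (topological) obstacle, though it is elementary.

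\emph{Step 2 (gap estimate).} Along each ray, consecutive radii satisfy
\[
t_k^{-p}-t_{k+1}^{-p}\approx 2p\,t_k^{-p-1}=2p\,\rho_k^{1+1/p}.
\]
So at radius $\rho$ the points of $E$ on a given ray are spaced at most $C_p\rho^{1+1/p}$ apart, including the cross-cut widths.

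\emph{Step 3 (non-porosity at $0$).} Suppose $E$ were porous with constant $c$. Take $x=0$ and $r$ small, and let $B(z,cr)\subset B(0,r)$ be a ball that avoids $E$. If $|z|<cr/2$ then $0\in B(z,cr)$, which is impossible. Otherwise $B(z,cr)$ contains the radial segment $\{\rho e^{\bi\arg z}: |\rho-|z||<cr/2\}$, which has length $cr$ at radii $\le r$. By Step 2, this segment contains a point of some $\sigma_k(\arg z)$, and hence a point of $E$, as soon as $C_p r^{1+1/p}<cr/2$. This holds for all small $r$. That is a contradiction, so $E$ is not porous in $\C$.
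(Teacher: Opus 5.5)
Your proposal is correct and follows essentially the same route as the paper. On each ray from $0$ the continuum $E$ must meet every crossing of the spiral strip; consecutive crossings near radius $r$ are only $O(r^{1+1/p})=o(r)$ apart, so no radial segment of length comparable to $cr$ inside a ball $B(z,cr)$ with $z\in B(0,r)$ can avoid $E$, which contradicts porosity at $0$.

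Your strip parametrization makes explicit the separation step that the paper merely asserts, and your use of closed cross-cuts is the right formulation for $E\subset\overline{\Omega_p}$. One wording fix: in Step 3, say that the segment is longer than the maximal gap between consecutive points of $E$ on the ray (your Step 2 bound). Containing a point of some $\sigma_k$ does not by itself give a point of $E$.
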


\begin{proof}[Proof of Lemma \ref{lem:helper}]
Let $E$ be as in the statement of the lemma, and assume that $E$ is $c$-porous for some $c>0$. Without loss of generality we may assume that $c<\tfrac12$.

Choose $k_0 \in \N$ so that
\begin{equation}\label{eq:k0}
k_0 \ge \max \left\{ \frac{3p}{4c}, 2 \, \frac{(1-2c)^{1/p}}{1-(1-2c)^{1/p}} \right\}.
\end{equation}
Set $r_0 = (2k_0)^{-p}$. We will show that $B(0,r_0)$ contains no ball $B(z,r)$ with $r = cr_0$ and $B(z,r) \cap E = \emptyset$.

For $-\pi \le \alpha < \pi$, let $\Gamma_\alpha := \{ r e^{\bi\alpha}:r>0\}$ be the ray from $0$ with argument $\alpha$. Note that if a ball $B(z,r)$ as above exists, then there exists an interval $I$ of length $2cr_0$ along some ray $\Gamma_\alpha$ with $I \cap E = \emptyset$.

We now analyze the structure of
$$
\Omega_p \cap \Gamma_\alpha \cap B(0,r_0).
$$
Note that $z \in \Omega_p \cap \Gamma_\alpha \cap B(0,r_0)$ if and only if $z = t^{-p} e^{\bi \pi t} e^{\bi \theta}$ with $|\theta|<\eps$, $t>r_0^{-1/p} = 2k_0$, and $t \pi + \theta = \alpha + 2k \pi$ for some integer $k$. In this case, we have
$$
2k = t + \frac{\theta-\alpha}{\pi} > t - \frac{\eps+\pi}{\pi} > t - \frac32 \ge 2k_0 - \frac32 > 2k_0 - 2
$$
and so in fact $k \ge k_0$. Thus
$$
\Omega_p \cap \Gamma_\alpha \cap B(0,r_0) = \bigcup_{k=k_0+1}^\infty (a_k,b_k)e^{\bi\alpha} \cup \left( (a_{k_0},b_{k_0})e^{\bi\alpha} \cap B(0,r_0) \right),
$$
where
$$
a_k = (2k+ \frac{\alpha+\eps}{\pi})^{-p} \quad \mbox{and} \quad b_k = (2k + \frac{\alpha-\eps}{\pi})^{-p}.
$$
Hence
$$
\Gamma_\alpha \cap B(0,r_0) \setminus \Omega_p = \bigcup_{k=k_0+1}^\infty [b_{k+1},a_k] e^{\bi\alpha} \cup \left( ([b_{k_0+1},a_{k_0}] \cup [b_{k_0},a_{k_0-1}]) e^{\bi\alpha} \cap B(0,r_0) \right).
$$
The curve $E$ is a connected set joining $-1$ to $0$ in $\Omega_p$, and hence $E$ intersects both $(a_{k+1},b_{k+1})e^{\bi\alpha}$ and $(a_k,b_k)e^{\bi\alpha}$ for every $k \in \N$. It follows that the longest interval contained in $B(0,r_0) \cap \Gamma_\alpha \setminus E$ has length at most
$$
\max \left\{ \sup_{k \ge k_0+1} (b_k - a_{k+1}), r_0 - a_{k_0+1} \right\}.
$$
For $k \ge k_0+1$ we compute
$$
b_k - a_{k+1} = (2k+\frac{\alpha-\eps}\pi)^{-p} - (2k+2+\frac{\alpha+\eps}\pi)^{-p}.
$$
Using the identity $\int_A^B pw^{-p-1} \, dw = A^{-p} - B^{-p}$ for $A<B$, and the fact that $w \mapsto w^{-p-1}$ is decreasing, we obtain
\begin{equation*}\begin{split}
b_k - a_{k+1} &\le p (2k + \frac{\alpha-\eps}\pi)^{-p-1} (2k+2+\frac{\alpha+\eps}\pi - 2k - \frac{\alpha-\eps}\pi) \\
& = 2p(1+\frac\eps\pi)(2k+\frac{\alpha-\eps}\pi)^{-p-1} \\
&\le 3p (2k-\frac32)^{-p-1} \\
&\le 3p (2k_0+2-\frac32)^{-p-1}
< 2cr_0
\end{split}\end{equation*}
by \eqref{eq:k0}. Furthermore,
$$
(1-2c)r_0 = (1-2c)(2k_0)^{-p} < (2k_0+2+\frac{\alpha+\eps}\pi)^{-p}
$$
by \eqref{eq:k0}, and hence
$$
r_0 - a_{k_0+1} < 2cr_0
$$
as well.

Since $\alpha$ was arbitrary, it follows that $B(0,r_0) \setminus E$ can contain no radial line segment of length $2cr_0$. This contradicts the aforementioned consequence of the assumed $c$-porosity of $E$. We conclude that $E$ is not a porous subset of $\C$.
\end{proof}

\section{Further Discussion and Open Questions}\label{sec:open_questions}
    
Recall that a quasiregular mapping $f : \R^n \to \R^n$, $n \ge 2$, is said to be of {\it polynomial type} if $|f(x)| \to \infty$ as $|x| \to \infty$, whereas it is said to be of transcendental type if this limit does not exist. This is in direct analogy with the dichotomy between polynomials and transcendental entire functions. For more details, see \cite{FletcherNicks2011}.

A canonical example of a quasiregular mapping of polynomial type is the {\it power mapping} $f:\R^n \to \R^n$, $n \ge 3$; see e.g.\ \cite[Example I.3.2]{RickmanBookQRMaps} for the construction when $n=3$. This map preserves the Nagata dimension of arbitrary subsets of $\R^n$. The proof follows the same lines as our proof of Corollary \ref{cor:zn_preserves_ndim}, using the finite stability of Nagata dimension and the fact that conical sectors $\Omega(B) := \{ry:0<r<\infty, y \in B\}$ are uniform domains for sufficiently nice subsets $B \subset \Sph^{n-1}$.

\begin{question}
Let $f: \R^n \to \R^n$, $n \ge 3$, be a quasiregular map of polynomial type. Must $f$ preserve the Nagata dimension of arbitrary subsets of $\R^n$?
\end{question}

\begin{question}
Let $f:\Omega \to \R^n$, $\Omega \subset \R^n$, $n \ge 3$ be a quasiregular map. Must $\dim_N A = \dim_N f(A)$ for every $A \Subset \Omega$?
\end{question}

Our proofs of the corresponding results in dimension $n=2$ in section \ref{subsec:analytic_maps_ndim} rely heavily on the discreteness of the branch set (critical set) of an analytic function and the well-understood local behavior of the mapping near critical points. A significant complication in understanding the behavior of quasiregular mappings in higher dimensions lies in the more complicated structure of the branch set. Recall that the {\it branch set} $B_f$ of a quasiregular mapping $f:\Omega \to \R^n$ is
$$
B_f := \{ x \in \Omega : f \mbox{ is not locally homeomorphic at $x$} \}.
$$
In dimension at least three, the branch set of a quasiregular mapping is not discrete. By results by \v Cernavski\u i \cite{Cernavski1964} and V\"ais\"al\"a \cite{VaisalaBranchSet1966}, we have
\begin{equation}\label{eqn:cernavski}
\dim B_f = \dim f(B_f) \le n - 2.
\end{equation}
Here `$\dim$' denotes the topological dimension. It is not currently known (see, for example \cite[Question 29]{hs:questions} or \cite{sre:qr}) whether $\dim B_f$ is necessarily equal to $n-2$, when $f$ is a nonconstant quasiregular mapping in $\R^n$ with $B_f \ne \emptyset$.

It is natural to inquire whether a relationship analogous to~\eqref{eqn:cernavski} also holds for the Nagata dimension. However, the situation is quite different as demonstrated in the following examples.

\begin{example}
Let $f:\C \to \C$ be given by $f(z) = -\tfrac{1}{\pi} \cos(\pi z)$. Then $f$ is entire and hence $1$-quasiregular. The branch set $B_f = \Z$, while $f(B_f) = \{ \pm \tfrac1\pi \}$. Thus $\dim_N B_f = 1 > 0 = n-2$ and moreover $\dim_N B_f = 1 > \dim_N f(B_f) = 0$. 
\end{example}

\begin{example}\label{ex:zorich}
Let $f:\R^3 \to \R^3 \setminus \{0\}$ be Zorich's exponential-type quasiregular mapping. See e.g.\ \cite[Example I.3.3]{RickmanBookQRMaps}. For this map,
$$
B_f = \Z^2 \times \R
$$
while
$$
f(B_f) = \{(re^{\bi\theta},0):r>0,\theta \in \{0,\tfrac\pi2,\pi,\tfrac{3\pi}2\} \},
$$
whence $\dim_N B_f = 3 > \dim_N f(B_f) = 1$.
\end{example}
        
\begin{question}\label{q:nagata-dim-f-B-f}
Let $f:\Omega \to \R^n$, $n \ge 3$, be a quasiregular mapping. Must it hold true that $\dim_N f(B_f) \le n-2$?
\end{question}

\begin{question}
What are the possible values of $\dim_N B_f$ and $\dim_N f(B_f)$ for quasiregular mappings $f$ of domains in $\R^n$, $n \ge 3$?
\end{question}

Note that, in view of \eqref{eq:n-to-a}, Example \ref{ex:zorich} also provides an example of a quasiregular mapping $f:\R^n \to \R^n$ for which $B_f$ has Assouad dimension $n$. By way of contrast, it was shown in \cite[Theorem 5.13]{sar:qr-branch} and \cite[Theorem 1.3]{bh:qr-branch} that the Hausdorff dimensions of both $B_f$ and $f(B_f)$ are bounded above by a constant $c = c(n,K) < n$ whenever $f$ is a $K$-quasiregular mapping in $\R^n$, $n \ge 3$.

There exist quasiregular maps $f$ of $\R^n$, $n \ge 3$, for which $\dim_H B_f$ and $\dim_H f(B_f)$ take on any prescribed values in $[n-2,n)$. Gehring and V\"ais\"al\"a \cite[Corollary 22]{gv:hausdorff} provided such an example in connection with their estimates for quasiconformal distortion of Hausdorff dimension. In low dimensions, it can be deduced from a construction in \cite{bh:qr-branch} and \cite{ktw:qr-branch} that there exist quasiregular mappings whose branch sets exhibit a stronger property. To wit, when $n \in \{3,4\}$ and for any given $\alpha,\beta \in [n-2,n)$ there exists $f: \R^n \to \R^n$ quasiregular with $\dim_H B_f = \dim_A B_f = \alpha$ and $\dim_H f(B_f) = \dim_A f(B_f) = \beta$. In fact, both $B_f$ and $f(B_f)$ in all of the preceding examples are {\it quasiplanes}: there exist quasiconformal maps $g,h:\R^n \to \R^n$ so that $f = h \circ W_d \circ g^{-1}$ where $W_d:\R^n \to \R^n$ denotes the degree $d$ winding map \cite[Example I.3.1]{RickmanBookQRMaps}. Thus $B_f = g(\R^{n-2})$ and $f(B_f) = h(\R^{n-2})$, where $B_{W_d} = W_d(B_{W_d}) = \R^{n-2}$. Note that while the Hausdorff and Assouad dimensions of $f(B_f)$ are large in this class of examples, there is no contradiction with the conjecture in Question \ref{q:nagata-dim-f-B-f}. Indeed, both $B_f$ and $f(B_f)$ are quasisymmetrically equivalent to $\R^{n-2}$, and so the identities $\dim_N B_f = \dim_N f(B_f) = n-2$ follow from Theorem \ref{thm:QS_invariance_ndim}.

For an informative survey on the structure and complexity of the branch sets of higher-dimensional quasiregular mappings, we refer the reader to \cite{hei:icm}. 

\medskip

With regards to quasiconformal maps in higher dimensions, we anticipate that an example similar to that in Theorem \ref{thm:spiral_domain_porosity_change} can be constructed. In other words, we anticipate that the following question has a positive answer.

\begin{question}
Let $n \ge 3$. Does there exist a quasiconformal map $f:\Omega \to \Omega'$ be domains in $\R^n$, and a set $E \subset \Omega$ so that $\ndim E \le n-1$ and $\ndim f(E) = n$?
\end{question}

Higher-dimensional analogs of the polynomial spiralling domains $\Omega_p$ can be constructed, cf.\ the examples of {\it polynomial spiral shells} in \cite{ekc:spirals}, and we expect that it will be relatively straightforward to establish the non-porosity of appropriate subsets of such domains which wind through all levels of the spiral. A more challenging task is to show that such spiralling domains can be mapped quasiconformally onto uniform domains, or even onto the unit ball in $\R^n$.

\bibliographystyle{acm}
\bibliography{references}

@article {gv:hausdorff,
    AUTHOR = {Gehring, F. W. and V\"{a}is\"{a}l\"{a}, J.},
     TITLE = {Hausdorff dimension and quasiconformal mappings},
   JOURNAL = {J. London Math.\ Soc.\ (2)},
    VOLUME = {6},
      YEAR = {1973},
     PAGES = {504--512},
}

@article {bh:qr-branch,
    AUTHOR = {Bonk, Mario and Heinonen, Juha},
     TITLE = {Smooth quasiregular mappings with branching},
   JOURNAL = {Publ.\ Math.\ Inst.\ Hautes \'{E}tudes Sci.},
    NUMBER = {100},
      YEAR = {2004},
     PAGES = {153--170},
}

@article {sar:qr-branch,
    AUTHOR = {Sarvas, Jukka},
     TITLE = {The {H}ausdorff dimension of the branch set of a quasiregular
              mapping},
   JOURNAL = {Ann.\ Acad.\ Sci.\ Fenn.\ Ser.\ A I Math.},
    VOLUME = {1},
      YEAR = {1975},
    NUMBER = {2},
     PAGES = {297--307},
}

@article {fr:spiral,
    AUTHOR = {Fraser, Jonathan M.},
     TITLE = {On {H}\"{o}lder solutions to the spiral winding problem},
   JOURNAL = {Nonlinearity},
  FJOURNAL = {Nonlinearity},
    VOLUME = {34},
      YEAR = {2021},
    NUMBER = {5},
     PAGES = {3251--3270},
}

@article {ktw:qr-branch,
    AUTHOR = {Kaufman, Robert and Tyson, Jeremy T. and Wu, Jang-Mei},
     TITLE = {Smooth quasiregular maps with branching in {${\bf R}^n$}},
   JOURNAL = {Publ.\ Math.\ Inst.\ Hautes \'{E}tudes Sci.},
    NUMBER = {101},
      YEAR = {2005},
     PAGES = {209--241},
}

@book {heins:cft,
    AUTHOR = {Heins, Maurice},
     TITLE = {Complex function theory},
    SERIES = {Pure and Applied Mathematics, Vol.\ 28},
 PUBLISHER = {Academic Press, New York-London},
      YEAR = {1968},
     PAGES = {xv+416},
}

@article{LangSchlichenmaier2005,
    author = {Lang, Urs and Schlichenmaier, Thilo},
    title = "{Nagata dimension, quasisymmetric embeddings, and Lipschitz extensions}",
    journal = {Internat.\ Math.\ Research Notices},
    volume = {2005},
    number = {58},
    pages = {3625-3655},
    year = {2005},
    month = {01},
}

@article{DonneRajala2015,
 ISSN = {00222518, 19435258},
 URL = {http://www.jstor.org/stable/26315451},
 author = {Enrico {Le Donne} and Tapio Rajala},
 journal = {Indiana University Mathematics Journal},
 number = {1},
 pages = {21--54},
 publisher = {Indiana University Mathematics Department},
 title = {Assouad Dimension, {N}agata Dimension, and Uniformly Close Metric Tangents},
 volume = {64},
 year = {2015}
}

@article {luu:assouad,
    AUTHOR = {Luukkainen, Jouni},
     TITLE = {Assouad dimension: antifractal metrization, porous sets, and
              homogeneous measures},
   JOURNAL = {J. Korean Math.\ Soc.},
    VOLUME = {35},
      YEAR = {1998},
    NUMBER = {1},
     PAGES = {23--76},
}

@book {gh:quasidisk,
    AUTHOR = {Gehring, Frederick W. and Hag, Kari},
     TITLE = {The ubiquitous quasidisk},
    SERIES = {Mathematical Surveys and Monographs},
    VOLUME = {184},
 PUBLISHER = {American Mathematical Society, Providence, RI},
      YEAR = {2012},
     PAGES = {xii+171},
}

@article {ms:injectivity,
    AUTHOR = {Martio, O. and Sarvas, J.},
     TITLE = {Injectivity theorems in plane and space},
   JOURNAL = {Ann.\ Acad.\ Sci.\ Fenn.\ Ser.\ A I Math.},
    VOLUME = {4},
      YEAR = {1979},
    NUMBER = {2},
     PAGES = {383--401},
}

@article {BuyaloCap12005,
    AUTHOR = {Buyalo, S. V.},
     TITLE = {Capacity dimension and embedding of hyperbolic spaces into the
              product of trees},
   JOURNAL = {Algebra i Analiz},
    VOLUME = {17},
      YEAR = {2005},
    NUMBER = {4},
     PAGES = {42--58},
}

@article {BuyaloCap22005,
    AUTHOR = {Buyalo, S. V.},
     TITLE = {Asymptotic dimension of a hyperbolic space and the capacity
              dimension of its boundary at infinity},
   JOURNAL = {Algebra i Analiz},
    VOLUME = {17},
      YEAR = {2005},
    NUMBER = {2},
     PAGES = {70--95},
}

@article {BuyaloLebedevaPontryagin2007,
    AUTHOR = {Buyalo, S. V. and Lebedeva, N. D.},
     TITLE = {Dimensions of locally and asymptotically self-similar spaces},
   JOURNAL = {Algebra i Analiz},
    VOLUME = {19},
      YEAR = {2007},
    NUMBER = {1},
     PAGES = {60--92},
}

@article {BrodskiyDydakHigesMitra2007,
    AUTHOR = {Brodskiy, N. and Dydak, J. and Higes, J. and Mitra, A.},
     TITLE = {Dimension zero at all scales},
   JOURNAL = {Topology Appl.},
  FJOURNAL = {Topology and its Applications},
    VOLUME = {154},
      YEAR = {2007},
    NUMBER = {14},
     PAGES = {2729--2740},
      ISSN = {0166-8641,1879-3207},
   MRCLASS = {54F45 (18B30 54C55 54E35 54F50 54H11)},
  MRNUMBER = {2340955},
MRREVIEWER = {M.\ Hu\v{s}ek},
       DOI = {10.1016/j.topol.2007.05.006},
       URL = {https://doi.org/10.1016/j.topol.2007.05.006},
}

@article {Xie2008,
    AUTHOR = {Xie, Xiangdong},
     TITLE = {Nagata dimension and quasi-{M}\"{o}bius maps},
   JOURNAL = {Conform.\ Geom.\ Dyn.},
  FJOURNAL = {Conformal Geometry and Dynamics. An Electronic Journal of the
              American Mathematical Society},
    VOLUME = {12},
      YEAR = {2008},
     PAGES = {1--9},
      ISSN = {1088-4173},
   MRCLASS = {54F45 (30C65)},
  MRNUMBER = {2372759},
MRREVIEWER = {Matti\ Vuorinen},
       DOI = {10.1090/S1088-4173-08-00173-2},
       URL = {https://doi.org/10.1090/S1088-4173-08-00173-2},
}

@article {BrodskiyDydakLevinMitra2008,
    AUTHOR = {Brodskiy, N. and Dydak, J. and Levin, M. and Mitra, A.},
     TITLE = {A {H}urewicz theorem for the {A}ssouad-{N}agata dimension},
   JOURNAL = {J. Lond. Math. Soc. (2)},
  FJOURNAL = {Journal of the London Mathematical Society. Second Series},
    VOLUME = {77},
      YEAR = {2008},
    NUMBER = {3},
     PAGES = {741--756},
      ISSN = {0024-6107,1469-7750},
   MRCLASS = {54F45 (18B30 20F99 54E35)},
  MRNUMBER = {2418302},
MRREVIEWER = {Klaas\ Pieter\ Hart},
       DOI = {10.1112/jlms/jdn005},
       URL = {https://doi.org/10.1112/jlms/jdn005},
}

@article {DydakHiges2008,
    AUTHOR = {Dydak, J. and Higes, J.},
     TITLE = {Asymptotic cones and {A}ssouad-{N}agata dimension},
   JOURNAL = {Proc. Amer. Math. Soc.},
  FJOURNAL = {Proceedings of the American Mathematical Society},
    VOLUME = {136},
      YEAR = {2008},
    NUMBER = {6},
     PAGES = {2225--2233},
      ISSN = {0002-9939,1088-6826},
   MRCLASS = {54F45 (54C65 55M10)},
  MRNUMBER = {2383529},
       DOI = {10.1090/S0002-9939-08-09149-1},
       URL = {https://doi.org/10.1090/S0002-9939-08-09149-1},
}

@article {SormaniWenger2010,
    AUTHOR = {Sormani, Christina and Wenger, Stefan},
     TITLE = {Weak convergence of currents and cancellation},
   JOURNAL = {Calc. Var. Partial Differential Equations},
    VOLUME = {38},
      YEAR = {2010},
    NUMBER = {1-2},
     PAGES = {183--206},
}

@book {RemmertComplexFunctionTheory,
    AUTHOR = {Remmert, Reinhold},
     TITLE = {Classical topics in complex function theory},
    SERIES = {Graduate Texts in Mathematics},
    VOLUME = {172},
 PUBLISHER = {Springer-Verlag, New York},
      YEAR = {1998},
     PAGES = {xx+349},
       DOI = {10.1007/978-1-4757-2956-6},
       URL = {https://doi.org/10.1007/978-1-4757-2956-6},
}

@article {Vaisala1987Porous,
    AUTHOR = {V\"ais\"al\"a, Jussi},
     TITLE = {Porous sets and quasisymmetric maps},
   JOURNAL = {Trans.\ Amer.\ Math.\ Soc.},
    VOLUME = {299},
      YEAR = {1987},
    NUMBER = {2},
     PAGES = {525--533},
}

@article {hs:questions,
    AUTHOR = {Heinonen, Juha and Semmes, Stephen},
     TITLE = {Thirty-three yes or no questions about mappings, measures, and
              metrics},
   JOURNAL = {Conform.\ Geom.\ Dyn.},
    VOLUME = {1},
      YEAR = {1997},
     PAGES = {1--12 (electronic)},
}

@incollection {sre:qr,
    AUTHOR = {Srebro, Uri},
     TITLE = {Topological properties of quasiregular mappings},
 BOOKTITLE = {Quasiconformal space mappings},
    SERIES = {Lecture Notes in Math.},
    VOLUME = {1508},
     PAGES = {104--118},
 PUBLISHER = {Springer, Berlin},
      YEAR = {1992},
}

@inproceedings {hei:icm,
    AUTHOR = {Heinonen, Juha},
     TITLE = {The branch set of a quasiregular mapping},
 BOOKTITLE = {Proceedings of the {I}nternational {C}ongress of
              {M}athematicians, {V}ol.\ {II}},
     PAGES = {691--700},
 PUBLISHER = {Higher Ed. Press, Beijing},
      YEAR = {2002},
}

@book {Heinonen2001LecturesonAnalysis,
    AUTHOR = {Heinonen, Juha},
     TITLE = {Lectures on analysis on metric spaces},
    SERIES = {Universitext},
 PUBLISHER = {Springer-Verlag, New York},
      YEAR = {2001},
     PAGES = {x+140},
      ISBN = {0-387-95104-0},
   MRCLASS = {30C65 (28A75 28A78 46E35)},
  MRNUMBER = {1800917},
MRREVIEWER = {Christopher\ Bishop},
       DOI = {10.1007/978-1-4613-0131-8},
       URL = {https://doi.org/10.1007/978-1-4613-0131-8},
}

@book {RickmanBookQRMaps,
    AUTHOR = {Rickman, Seppo},
     TITLE = {Quasiregular mappings},
    SERIES = {Ergebnisse der Mathematik und ihrer Grenzgebiete (3)},
    VOLUME = {26},
 PUBLISHER = {Springer-Verlag, Berlin},
      YEAR = {1993},
     PAGES = {x+213},
      ISBN = {3-540-56648-1},
   MRCLASS = {30C65},
  MRNUMBER = {1238941},
MRREVIEWER = {Gaven\ J.\ Martin},
       DOI = {10.1007/978-3-642-78201-5},
       URL = {https://doi.org/10.1007/978-3-642-78201-5},
}

@article {FletcherNicks2011,
    AUTHOR = {Fletcher, Alastair N. and Nicks, Daniel A.},
     TITLE = {Quasiregular dynamics on the {$n$}-sphere},
   JOURNAL = {Ergodic Theory Dynam. Systems},
  FJOURNAL = {Ergodic Theory and Dynamical Systems},
    VOLUME = {31},
      YEAR = {2011},
    NUMBER = {1},
     PAGES = {23--31},
}

@article {Cernavski1964,
    AUTHOR = {{\v{C}}ernavski\u{i}, A. V.},
     TITLE = {Finite-to-one open mappings of manifolds},
   JOURNAL = {Mat.\ Sb.\ (N.S.)},
  FJOURNAL = {Matematicheski\u i\ Sbornik. Novaya Seriya},
    VOLUME = {65(107)},
      YEAR = {1964},
     PAGES = {357--369},
      ISSN = {0368-8666},
   MRCLASS = {54.60},
  MRNUMBER = {172256},
MRREVIEWER = {E.\ E.\ Floyd},
}

@article {VaisalaBranchSet1966,
    AUTHOR = {V\"ais\"al\"a, Jussi},
     TITLE = {Discrete open maps on manifolds},
   JOURNAL = {Ann.\ Acad.\ Sci.\ Fenn.\ Ser.\ A I},
    VOLUME = {392},
      YEAR = {1966},
     PAGES = {10},
}

@article {DavidNagataEmbed2024,
    AUTHOR = {David, Guy C.},
     TITLE = {A non-injective {A}ssouad-type theorem with sharp dimension},
   JOURNAL = {J. Geom. Anal.},
  FJOURNAL = {Journal of Geometric Analysis},
    VOLUME = {34},
      YEAR = {2024},
    NUMBER = {2},
     PAGES = {Paper No. 45, 19},
      ISSN = {1050-6926,1559-002X},
   MRCLASS = {30L99 (30L05 51F30)},
  MRNUMBER = {4682996},
MRREVIEWER = {Behnam\ Esmayli},
       DOI = {10.1007/s12220-023-01503-7},
       URL = {https://doi.org/10.1007/s12220-023-01503-7},
}

@article {ekc:spirals,
    AUTHOR = {Chrontsios-Garitsis, Efstathios-K.},
     TITLE = {On concentric fractal spheres and spiral shells},
   JOURNAL = {Nonlinearity},
  FJOURNAL = {Nonlinearity},
    VOLUME = {38},
      YEAR = {2025},
    NUMBER = {3},
     PAGES = {Paper No. 035019, 21},
}

@article {ChrontsiosTyson2023,
    AUTHOR = {Chrontsios-Garitsis, Efstathios K. and Tyson, Jeremy T.},
     TITLE = {Quasiconformal distortion of the {A}ssouad spectrum and
              classification of polynomial spirals},
   JOURNAL = {Bull. Lond. Math. Soc.},
  FJOURNAL = {Bulletin of the London Mathematical Society},
    VOLUME = {55},
      YEAR = {2023},
    NUMBER = {1},
     PAGES = {282--307},
      ISSN = {0024-6093,1469-2120},
   MRCLASS = {30C65 (28A78 30C62)},
  MRNUMBER = {4568342},
MRREVIEWER = {Lucas\ da Silva Oliveira},
       DOI = {10.1112/blms.12727},
       URL = {https://doi.org/10.1112/blms.12727},
}

@article {Chrontsios2024,
    AUTHOR = {Chrontsios-Garitsis, Efstathios K.},
     TITLE = {Quasiregular distortion of dimension},
   JOURNAL = {Conform.\ Geom.\ Dyn.},
    VOLUME = {28},
      YEAR = {2024},
     PAGES = {165--175},
}

@article {TukiaVaisala1980,
    AUTHOR = {Tukia, P. and V\"{a}is\"{a}l\"{a}, J.},
     TITLE = {Quasisymmetric embeddings of metric spaces},
   JOURNAL = {Ann.\ Acad.\ Sci.\ Fenn.\ Ser.\ A I Math.},
    VOLUME = {5},
      YEAR = {1980},
    NUMBER = {1},
     PAGES = {97--114},

}

@article {VaisalaQM1984,
    AUTHOR = {V\"ais\"al\"a, Jussi},
     TITLE = {Quasi-{M}\"obius maps},
   JOURNAL = {J. Analyse Math.},
  FJOURNAL = {Journal d'Analyse Math\'ematique},
    VOLUME = {44},
      YEAR = {1984/85},
     PAGES = {218--234},
}

@book {LehtoVirtanen1973,
    AUTHOR = {Lehto, Olli and Virtanen, K. I.},
     TITLE = {Quasiconformal mappings in the plane},
    SERIES = {Grundlehren der mathematischen Wissenschaften},
   EDITION = {2nd},
      VOLUME = {126},
 PUBLISHER = {Springer-Verlag, New York-Heidelberg},
      YEAR = {1973},
     PAGES = {viii+258},
}

@article {ass:plongements,
    AUTHOR = {Assouad, Patrice},
     TITLE = {Plongements lipschitziens dans {${\bf R}^{n}$}},
   JOURNAL = {Bull.\ Soc.\ Math.\ France},
  FJOURNAL = {Bulletin de la Soci\'{e}t\'{e} Math\'{e}matique de France},
    VOLUME = {111},
      YEAR = {1983},
    NUMBER = {4},
     PAGES = {429--448},

}

@article {AssouadOriginal,
    AUTHOR = {Assouad, Patrice},
     TITLE = {Sur la distance de {N}agata},
   JOURNAL = {C. R. Acad.\ Sci.\ Paris S\'er.\ I Math.},
  FJOURNAL = {Comptes Rendus des S\'eances de l'Acad\'emie des Sciences.
              S\'erie I. Math\'ematique},
    VOLUME = {294},
      YEAR = {1982},
    NUMBER = {1},
     PAGES = {31--34},
}

@article {tys:assouad,
    AUTHOR = {Tyson, Jeremy T.},
     TITLE = {Lowering the {A}ssouad dimension by quasisymmetric mappings},
   JOURNAL = {Illinois J. Math.},
  FJOURNAL = {Illinois Journal of Mathematics},
    VOLUME = {45},
      YEAR = {2001},
    NUMBER = {2},
     PAGES = {641--656},
}

@book {BuyaloSchroeder:asympbook,
    AUTHOR = {Buyalo, Sergei and Schroeder, Viktor},
     TITLE = {Elements of asymptotic geometry},
    SERIES = {EMS Monographs in Mathematics},
 PUBLISHER = {European Mathematical Society (EMS), Z\"urich},
      YEAR = {2007},
     PAGES = {xii+200},
      ISBN = {978-3-03719-036-4},
   MRCLASS = {53C23 (20F67)},
  MRNUMBER = {2327160},
MRREVIEWER = {Mario\ Bonk},
       DOI = {10.4171/036},
       URL = {https://doi.org/10.4171/036},
}
\end{document}